\newtheorem{theorem}{Theorem}[section]
\newtheorem{prop}[theorem]{Proposition}
\newtheorem{lem}[theorem]{Lemma}
\newtheorem{prop-def}{Proposition-Definition}[section]
\theoremstyle{definition}
\newtheorem{defn}[theorem]{Definition}
\newtheorem{remark}[theorem]{Remark}
\newtheorem{exam}[theorem]{Example}
\newcommand{\nc}{\newcommand}
\nc{\delete}[1]{{}}
\nc{\mmargin}[1]{}
\nc{\mlabel}[1]{\label{#1}}  
\nc{\mcite}[1]{\cite{#1}}  
\nc{\mref}[1]{\ref{#1}}  
\nc{\mbibitem}[1]{\bibitem{#1}} 
	\nc{\mlabel}[1]{\label{#1}  
		{\hfill \hspace{1cm}{\bf{{\ }\hfill(#1)}}}}
	\nc{\mcite}[1]{\cite{#1}{{\bf{{\ }(#1)}}}}  
	\nc{\mref}[1]{\ref{#1}{{\bf{{\ }(#1)}}}}  
	\nc{\mbibitem}[1]{\bibitem[\bf #1]{#1}} 
 \font\cyrs=wncyr7
\newcommand{\bk}{{\mathbf{k}}}
\nc{\vep}{\varepsilon}
\nc{\bin}[2]{ (_{\stackrel{\scs{#1}}{\scs{#2}}})}  
\nc{\binc}[2]{(\!\! \begin{array}{c} \scs{#1}\\
		\scs{#2} \end{array}\!\!)}  
\nc{\bincc}[2]{  ( {\scs{#1} \atop
		\vspace{-1cm}\scs{#2}} )}  
\nc{\oline}[1]{\overline{#1}}
\nc{\mapm}[1]{\lfloor\!|{#1}|\!\rfloor}
\nc{\bs}{\bar{S}}
\nc{\la}{\longrightarrow}
\nc{\ot}{\otimes}
\nc{\rar}{\rightarrow}
\nc{\lon }{\,\rightarrow\,}
\nc{\dar}{\downarrow}
\nc{\dap}[1]{\downarrow \rlap{$\scriptstyle{#1}$}}
\nc{\defeq}{\stackrel{\rm def}{=}}
\nc{\dis}[1]{\displaystyle{#1}}
\nc{\dotcup}{\ \displaystyle{\bigcup^\bullet}\ }
\nc{\hcm}{\ \hat{,}\ }
\nc{\hts}{\hat{\otimes}}
\nc{\hcirc}{\hat{\circ}}
\nc{\lleft}{[}
\nc{\lright}{]}
\nc{\curlyl}{\left \{ \begin{array}{c} {} \\ {} \end{array}
	\right .  \!\!\!\!\!\!\!}
\nc{\curlyr}{ \!\!\!\!\!\!\!
	\left . \begin{array}{c} {} \\ {} \end{array}
	\right \} }
\nc{\longmid}{\left | \begin{array}{c} {} \\ {} \end{array}
	\right . \!\!\!\!\!\!\!}
\nc{\ora}[1]{\stackrel{#1}{\rar}}
\nc{\ola}[1]{\stackrel{#1}{\la}}
\nc{\scs}[1]{\scriptstyle{#1}} \nc{\mrm}[1]{{\rm #1}}
\nc{\dirlim}{\displaystyle{\lim_{\longrightarrow}}\,}
\nc{\invlim}{\displaystyle{\lim_{\longleftarrow}}\,}
\nc{\dislim}[1]{\displaystyle{\lim_{#1}}} \nc{\colim}{\mrm{colim}}
\nc{\mvp}{\vspace{0.3cm}} \nc{\tk}{^{(k)}} \nc{\tp}{^\prime}
\nc{\ttp}{^{\prime\prime}} \nc{\svp}{\vspace{2cm}}
\nc{\vp}{\vspace{8cm}}
\nc{\modg}[1]{\!<\!\!{#1}\!\!>}
\nc{\intg}[1]{F_C(#1)}
\nc{\lmodg}{\!<\!\!}
\nc{\rmodg}{\!\!>\!}
\nc{\cpi}{\widehat{\Pi}}
\nc{\ssha}{{\mbox{\cyrs X}}} 
\nc{\tsha}{{\mbox{\cyrt X}}}
\nc{\shpr}{\diamond}    
\nc{\labs}{\mid\!}
\nc{\rabs}{\!\mid}
\nc{\RBA}{\mathsf{ERBPLie}}
\nc{\C}{{\mathrm{C}}}
\nc{\ad}{\mrm{ad}}
\nc{\ann}{\mrm{ann}}
\nc{\Aut}{\mrm{Aut}}
\nc{\DA}{{\mathsf{DL}_\lambda}}
\nc{\Alg}{{\mathrm{Lie}}}
\nc{\DO}{{\mathsf{DO}_\lambda}}
\nc{\bim}{\mbox{-}\mathsf{Rep}}
\nc{\md}{\mbox{-}\mathsf{rep}}
\nc{\br}{\mrm{bre}}
\nc{\can}{\mrm{can}}
\nc{\Cont}{\mrm{Cont}}
\nc{\rchar}{\mrm{char}}
\nc{\cok}{\mrm{coker}}
\nc{\de}{\mrm{dep}}
\nc{\dtf}{{R-{\rm tf}}}
\nc{\dtor}{{R-{\rm tor}}}
\nc{\Div}{{\mrm Div}}
\nc{\Diff}{\mrm{DL}}
\nc{\Diffl}{\mathsf{DL}_\lambda}
\nc{\diffo}{{\mathsf{DO}_\lambda}}
\nc{\Dif}{{\mathfrak{Dif}^\lambda}}
\nc{\Difinfty}{{\mathfrak{Dif}^\lambda_\infty}}
\nc{\alg}{\mathsf{Lie}}
\nc{\End}{\mrm{End}}
\nc{\Ext}{\mrm{Ext}}
\nc{\Fil}{\mrm{Fil}}
\nc{\Fr}{\mrm{Fr}}
\nc{\Frob}{\mrm{Frob}}
\nc{\Gal}{\mrm{Gal}}
\nc{\GL}{\mrm{GL}}
\nc{\Hom}{\mrm{Hom}}
\nc{\Hoch}{\mrm{Hoch}}
\nc{\hsr}{\mrm{H}}
\nc{\hpol}{\mrm{HP}}
\nc{\id}{\mrm{id}}
\nc{\im}{\mrm{im}}
\nc{\Id}{\mrm{Id}}
\nc{\ID}{\mrm{ID}}
\nc{\Irr}{\mrm{Irr}}
\nc{\incl}{\mrm{incl}}
\nc{\length}{\mrm{length}}
\nc{\NLSW}{\mrm{NLSW}}
\nc{\Lie}{\mrm{Lie}}
\nc{\mchar}{\rm char}
\nc{\mpart}{\mrm{part}}
\nc{\ql}{{\QQ_\ell}}
\nc{\qp}{{\QQ_p}}
\nc{\rank}{\mrm{rank}}
\nc{\rcot}{\mrm{cot}}
\nc{\rdef}{\mrm{def}}
\nc{\rdiv}{{\rm div}}
\nc{\rtf}{{\rm tf}}
\nc{\rtor}{{\rm tor}}
\nc{\res}{\mrm{res}}
\nc{\Sh}{{\mathrm{Sh}}}
\nc{\SL}{\mrm{SL}}
\nc{\Spec}{\mrm{Spec}}
\nc{\sgn}{{\mathrm{sgn}}}
\nc{\tor}{\mrm{tor}}
\nc{\Tr}{\mrm{Tr}}
\nc{\tr}{\mrm{tr}}
\nc{\wt}{\mrm{wt}}
\nc{\op}{\mrm{op}}
\nc{\rmH}{ {\mathrm{H}}}
\nc{\bfk}{{\bf k}}
\nc{\bfone}{{\bf 1}}
\nc{\bfzero}{{\bf 0}}
\nc{\detail}{\marginpar{\bf More detail}
	\noindent{\bf Need more detail!}
	\svp}
\nc{\gap}{\marginpar{\bf Incomplete}\noindent{\bf Incomplete!!}
	\svp}
\nc{\FMod}{\mathbf{FMod}}
\nc{\Int}{\mathbf{Int}}
\nc{\Mon}{\mathbf{Mon}}
\nc{\remarks}{\noindent{\bf Remarks: }}
\nc{\Rep}{\mathbf{Rep}}
\nc{\Rings}{\mathbf{Rings}}
\nc{\Sets}{\mathbf{Sets}}
\nc{\ob}{\mathsf{Ob}}
\nc{\BA}{{\mathbb A}}   \nc{\CC}{{\mathbb C}}
\nc{\DD}{{\mathbb D}}   \nc{\EE}{{\mathbb E}}
\nc{\FF}{{\mathbb F}}   \nc{\GG}{{\mathbb G}}
\nc{\HH}{{\mathbb H}}   \nc{\LL}{{\mathbb L}}
\nc{\NN}{{\mathbb N}}   \nc{\PP}{{\mathbb P}}
\nc{\QQ}{{\mathbb Q}}   \nc{\RR}{{\mathbb R}}
\nc{\TT}{{\mathbb T}}   \nc{\VV}{{\mathbb V}}
\nc{\ZZ}{{\mathbb Z}}   \nc{\TP}{\widetilde{P}}
\nc{\m}{{\mathbbm m}}
\nc{\cala}{{\mathcal A}}    \nc{\calc}{{\mathcal C}}
\nc{\cald}{\mathcal{D}}     \nc{\cale}{{\mathcal E}}
\nc{\calf}{{\mathcal F}}    \nc{\calg}{{\mathcal G}}
\nc{\calh}{{\mathcal H}}    \nc{\cali}{{\mathcal I}}
\nc{\call}{{\mathcal L}}    \nc{\calm}{{\mathcal M}}
\nc{\caln}{{\mathcal N}}    \nc{\calo}{{\mathcal O}}
\nc{\calp}{{\mathcal P}}    \nc{\calr}{{\mathcal R}}
\nc{\cals}{{\mathcal S}}    \nc{\calt}{{\Omega}}
\nc{\calv}{{\mathcal V}}    \nc{\calw}{{\mathcal W}}
\nc{\calx}{{\mathcal X}}
\nc{\fraka}{{\mathfrak a}}
\nc{\frakb}{\mathfrak{b}}
\nc{\frakg}{{\frak g}}
\nc{\frakl}{{\frak l}}
\nc{\fraks}{{\frak s}}
\nc{\frakB}{{\frak B}}
\nc{\frakm}{{\frak m}}
\nc{\frakM}{{\frak M}}
\nc{\frakp}{{\frak p}}
\nc{\frakW}{{\frak W}}
\nc{\frakX}{{\frak X}}
\nc{\frakS}{{\frak S}}
\nc{\frakA}{{\frak A}}
\nc{\frakx}{{\frakx}}
\nc{\frakC}{{\frak{C}}}
\nc{\frakh}{{\frak h}}
\nc{\lir}[1]{\textcolor{red}{\underline{Li:}#1 }}
\begin{document}

\title [Modified Rota-Baxter operators]{Modified Rota-Baxter operators of non-zero weight on $3$-Lie algebras}

\author{Shuangjian Guo}
\address{(Shuangjian Guo)
 School of Mathematics and Statistics, Guizhou University of Finance and Economics, Guizhou 550025, P. R. of China}
\email{shuangjianguo@126.com}

\author{Yufei Qin}
\address{(Yufei Qin)   School of Mathematical Sciences \\ Key Laboratory of MEA (Ministry of Education)\\ Shanghai Key laboratory of PMMP\\	East China Normal University\\	Shanghai 200241,	P. R. of China}
     \address{Department of Mathematics and Data Science\\ Vrije Universiteit Brussel\\ Pleinlaan 2, 1050 Brussels\\ Belgium }
\email{Yufei.Qin@vub.be (corresponding author)}

\author{Guodong Zhou}
\address{(Guodong Zhou) School of Mathematical Sciences,  Shanghai Key Laboratory of PMMP,  East China Normal University, Shanghai 200241, P. R. of China}
\email{gdzhou@math.ecnu.edu.cn}

\date{\today}

\begin{abstract}
In this paper, we introduce the notion of  modified Rota-Baxter
operators of non-zero weight on $3$-Lie algebras  and provide some examples. Next,  we give various constructions of modified Rota-Baxter
operators of non-zero weight  according to constructions of $3$-Lie algebras.  Furthermore,   we define a cohomology of  modified Rota-Baxter
operators of non-zero weight  on $3$-Lie algebras with coefficients in a suitable representation.   As an application, we study formal deformations of modified Rota-Baxter
operators of non-zero weight  that are generated by the above-defined cohomology. In the final part of the paper, we construct two \(L_\infty[1]\)-algebra structures whose Maurer-Cartan elements correspond to relative and absolute modified Rota-Baxter \(3\)-Lie algebra structures of nonzero weight, respectively. Lastly, we compare our \(L_\infty[1]\)-algebraic approach with the deformation-controlling \(L_\infty[1]\)-algebra for relative Rota-Baxter \(3\)-Lie operators developed by Hou, Sheng, and Zhou.

\end{abstract}

\subjclass[2010]{   
16B70
17B56  
18D99  
}

\keywords{Modified Rota-Baxter operator,    Cohomology, Nijenhuis operator, Formal deformation, Maurer-Cartan element, L$_\infty[1]$-algebra}

\maketitle

\tableofcontents

\allowdisplaybreaks

\section{Introduction}

Rota-Baxter algebras were first introduced by Baxter in his study of the fluctuation theory in probability \cite{B60}. Baxter's work was further investigated  by Rota \cite{Ro69} and Cartier \cite{C72}.  {Later,   Guo et al. \cite{G00,Gu00, GK00} established many important results concerning Rota–Baxter algebras.}  Bai etc.\cite{BGS19} introduced the notion of relative Rota-Baxter operators  on $3$-Lie algebras when studying the solutions of the $3$-Lie-Yang-Baxter equation. Recently, Tang etc.\cite{THS21} constructed a Lie $3$-algebra, and its Maurer-Cartan element is precisely the relative Rota-Baxter operator on the $3$-Lie algebra.  Later, Hou etc.\cite{HSZ23} introduced the  relative Rota-Baxter operator of non-zero weight on the $3$-Lie algebra and described the  relative Rota-Baxter operator of non-zero weight by introducing the $3$-post-Lie algebra.

The concept of the modified Rota-Baxter Lie algebras \cite{S83} originated from the modified classical Yang-Baxter equation, which was later applied to the study of non-commutative generalized Lax pairs, affine geometry of Lie groups, and the $\mathcal{O}$-operator, etc. \cite{BGN10,B90}. Recently, the authors studied modified Rota-Baxter Leibniz algebras \cite{MS23}, modified $r$-matrices \cite{JS21} and modified Rota-Baxter pre-Lie algebras \cite{GL25}. Due to the particularity of the definition of 3-Lie algebras, how to reasonably introduce the concept of modified Rota-Baxter operators on 3-Lie algebras is {the first motivation of this paper}.

A few years ago, Bai etc \cite{BGLW13} introduced the notion of  Rota-Baxter $3$-Lie algebra with arbitrary weights, and  shown that they can be derived from Rota-Baxter Lie algebras and pre-Lie algebras and from Rota-Baxter commutative associative algebras with derivations.  Recently, we \cite{GQWZ23} studied the cohomology, abelian extension and deformation theory of the Rota-Baxter $3$-Lie algebra with arbitrary weights.  How to construct modified Rota-Baxter 3-Lie algebras from modified Rota-Baxter Lie algebras and {modified Rota-Baxter pre-Lie algebras from modified Rota-Baxter commutative associative algebras with derivations} is the second motivation for studying this paper.

The construction of $L_\infty$-subalgebras using $V$-data has been studied in \cite{DM22, FZ15, LQYZ24}. Hou etc.\cite{HSZ23} constructed the controlling $L_\infty[1]$-algebra structure for relative Rota-Baxter operators of weight~$\lambda$ on $3$-Lie algebras. In the final part of the paper, by using the derived bracket technique,  we construct two \(L_\infty[1]\)-algebra structures whose Maurer-Cartan elements correspond to relative and absolute modified Rota-Baxter \(3\)-Lie algebra structures of nonzero weight, respectively. Lastly, we compare our \(L_\infty[1]\)-algebraic approach with the deformation-controlling \(L_\infty[1]\)-algebra for relative Rota--Baxter \(3\)-Lie operators developed by Hou, Sheng, and Zhou.

The paper is organized as follows. In Section~\ref{Preliminaries}, we recall some basic definitions about  $3$-Lie algebras and their cohomology. In Section~\ref{Modified Rota-Baxter operators of weight}, we introduce the notion of  modified Rota-Baxter
operators of non-zero weight on $3$-Lie algebras  and provide some examples. In Section~\ref{The constructions of modified Rota-Baxter operators},  we give various constructions of modified Rota-Baxter
operators of non-zero weight  according to constructions of $3$-Lie algebras.  In Section~\ref{Cohomology of modified Rota-Baxter operators of weight},   we define a cohomology of  modified Rota-Baxter
operators of non-zero weight  on $3$-Lie algebras with coefficients in a suitable representation. In Section~\ref{Formal deformations  of modified Rota-Baxter operators of weight },  as an application, we study formal deformations of modified Rota-Baxter
operators of non-zero weight  that are generated by the above-defined cohomology. In Section~\ref{L infinty-structure for (relative and absolute) modified Rota-Baxter 3-Lie algebras}, we construct an \( L_\infty[1] \)-algebra structure on the cochain complex associated with (relative and absolute) modified Rota-Baxter \( 3 \)-Lie algebras. We show that the modified Rota-Baxter \( 3 \)-Lie algebra structures of weight $\lambda$ correspond precisely to the Maurer-Cartan elements of this \( L_\infty[1] \)-algebra.  Furthermore, we identify an \( L_\infty[1] \)-subalgebra that controls deformations of relative Rota-Baxter \( 3 \)-Lie algebras of weight \( \lambda \). Under the twisting procedure, this subalgebra yields the \( L_\infty[1] \)-algebra constructed by Hou, Sheng, and Zhou, which controls the deformations of relative Rota-Baxter operators of weight \( \lambda \) on \( 3 \)-Lie algebras.

Throughout this paper, $\bk$ denotes a field of characteristic zero. All the vector spaces,
algebras, linear maps and tensor products are taken over $\bk$ unless otherwise specified. $\lambda\in \bk$ is a fixed nonzero scalar, referred to as the weight.

\smallskip

\section{Preliminaries}\label{Preliminaries}

In this section,  we will recall some basic notions and facts about $3$-Lie algebras from \cite{ BGLW13, Fil85, HSZ23, L16}.

\begin{defn}
	A \textbf{$3$-Lie algebra} is a vector space $\frak g$ together with a skew-symmetric trilinear map
	\[
	[\cdot,\cdot,\cdot]:\wedge^{3}\frak g\to\frak g
	\]
	satisfying the fundamental identity
	\[
	[x,y,[u,v,w]]
	=[[x,y,u],v,w]+[u,[x,y,v],w]+[u,v,[x,y,w]],
	\qquad x,y,u,v,w\in\frak g.
	\]
\end{defn}

\begin{defn}
	Let $(\frak g,[\cdot,\cdot,\cdot])$ be a $3$-Lie algebra and $V$ a vector space.
	A \textbf{representation of $\frak g$ on $V$} is a linear map
	\[
	\rho:\wedge^{2}\frak g\to\mathfrak{gl}(V)
	\]
	such that for all $x_1,x_2,x_3,x_4\in\frak g$,
	\begin{align}
		[\rho(x_1,x_2),\rho(x_3,x_4)]
		&=\rho([x_1,x_2,x_3],x_4)+\rho(x_3,[x_1,x_2,x_4]), \label{Eq: 3-Lie representation1}\\
		\rho(x_1,[x_2,x_3,x_4])
		&=\rho(x_3,x_4)\rho(x_1,x_2)-\rho(x_2,x_4)\rho(x_1,x_3)
		+\rho(x_2,x_3)\rho(x_1,x_4). \label{Eq: 3-Lie representation2}
	\end{align}
\end{defn}

\begin{defn}
	 Let $(\frak g,[\cdot,\cdot,\cdot])$ be a $3$-Lie algebra.
	 Define
	 \[
	 \ad:\wedge^{2}\frak g\to\mathfrak{gl}(\frak g),
	 \qquad
	 \ad_{x,y}(z)=[x,y,z].
	 \]
	 Then $\ad$ is a representation of $\frak g$ on $\frak g$, called the \textbf{adjoint representation}.
\end{defn}

\begin{defn}
	Let $\frak g$ be a $3$-Lie algebra. The \textbf{derived algebra} is
	\[
	\frak g^{1}=[\frak g,\frak g,\frak g],
	\]
	and the center is
	\[
	\mathcal C(\frak g)=\{x\in\frak g\mid [x,y,z]=0,\ \forall y,z\in\frak g\}.
	\]
\end{defn}

\begin{defn}
	Let $(\frak g,[\cdot,\cdot,\cdot])$ and $(\frak h,\{\cdot,\cdot,\cdot\})$ be $3$-Lie algebras, and
	$\rho:\wedge^{2}\frak g\to\mathfrak{gl}(\frak h)$ a representation of $\frak g$ on $\frak h$.
	If
	\[
	\rho(x,y)u\in\mathcal C(\frak h),\qquad
	\rho(x,y)\{u,v,w\}=0,
	\quad
	\forall x,y\in\frak g,\ u,v,w\in\frak h,
	\]
	then $\rho$ is called an \textbf{action of $\frak g$ on $\frak h$}.
\end{defn}

\smallskip
Let $(V,\rho)$ be a representation of a $3$-Lie algebra $\frak g$.
Set
\[
C^0_{\mathrm{3Lie}}(\frak g,V)=V,\qquad
C^n_{\mathrm{3Lie}}(\frak g,V)=
\Hom\big((\wedge^{2}\frak g)^{\otimes(n-1)}\wedge\frak g,\;V\big),\quad n\ge1.
\]
The coboundary operator
\[
\partial_{\mathrm{3Lie}}^n:C^n_{\mathrm{3Lie}}(\frak g,V)\to C^{n+1}_{\mathrm{3Lie}}(\frak g,V)
\]
is given, for $\mathfrak X_i=x_i\wedge y_i$, by
\[
\begin{split}
	\partial_{\mathrm{3Lie}}^n f(\mathfrak X_1,\ldots,\mathfrak X_n,x_{n+1})
	&=(-1)^{n+1}\rho(y_n,x_{n+1})f(\mathfrak X_1,\ldots,\mathfrak X_{n-1},x_n)\\
	&\quad+(-1)^{n+1}\rho(x_{n+1},x_n)f(\mathfrak X_1,\ldots,\mathfrak X_{n-1},y_n)\\
	&\quad+\sum_{j=1}^n(-1)^{j+1}\rho(x_j,y_j)
	f(\mathfrak X_1,\ldots,\widehat{\mathfrak X_j},\ldots,\mathfrak X_n,x_{n+1})\\
	&\quad+\sum_{j=1}^n(-1)^j
	f(\mathfrak X_1,\ldots,\widehat{\mathfrak X_j},\ldots,\mathfrak X_n,[x_j,y_j,x_{n+1}])\\
	&\quad+\sum_{1\le j<k\le n}(-1)^j
	f(\mathfrak X_1,\ldots,\widehat{\mathfrak X_j},\ldots,\mathfrak X_{k-1},\\
	&\hspace{2.8cm}
	[x_j,y_j,x_k]\wedge y_k+x_k\wedge[x_j,y_j,y_k],\mathfrak X_{k+1},\ldots,\mathfrak X_n,x_{n+1}).
\end{split}
\]

The corresponding cohomology is denoted by
\[
H^*_{\mathrm{3Lie}}(\frak g,V).
\]
When $V=\frak g$ with the adjoint representation, we write
\[
H^n_{\mathrm{3Lie}}(\frak g)=H^n_{\mathrm{3Lie}}(\frak g,\frak g),\qquad n\ge1.
\]

Next, we study decompositions of $3$-Lie algebras and their associated product structures.

	\begin{defn}\cite{L16}
		Let $(\frakg,[\cdot,\cdot,\cdot])$ be a $3$-Lie algebra and
		$N:\frakg\to\frakg$ a linear map.
			$N$ is called a \textbf{Nijenhuis operator} on $\frakg$ if
			\begin{align*}
				[N(x),N(y),N(z)]
				&=N\Big(
				[N(x),N(y),z]+[x,N(y),T(z)]+[N(x),y,N(z)] \\
				&\quad -N[N(x),y,z]-N[x,N(y),z]-N[x,y,T(z)]
				\Big).
			\end{align*}
	\end{defn}
	\begin{defn}\cite{ST18}
		Let $(\mathfrak{g},[\cdot,\cdot,\cdot]_{\mathfrak g})$ be a $3$-Lie algebra.
		An \textbf{almost product structure} on $\mathfrak g$ is a linear endomorphism
		$E:\mathfrak g\to\mathfrak g$ such that
		\[
		E^{2}=\mathrm{Id}, \qquad E\neq \pm \mathrm{Id}.
		\]
		An almost product structure $E$ is called a \textbf{product structure} if it
		satisfies the following integrability condition:
		\begin{align*}
			E[x,y,z]_{\mathfrak g}
			={}&[Ex,Ey,Ez]_{\mathfrak g}
			+[Ex,y,z]_{\mathfrak g}
			+[x,Ey,z]_{\mathfrak g}
			+[x,y,Ez]_{\mathfrak g} \\
			&-E[Ex,Ey,z]_{\mathfrak g}
			-E[x,Ey,Ez]_{\mathfrak g}
			-E[Ex,y,Ez]_{\mathfrak g}.
		\end{align*}
	\end{defn}
	\begin{remark}\label{Remark: Nijenhuis operator and product structure}\cite{ST18}
		A product structure on a $3$-Lie algebra can equivalently be regarded as a
		Nijenhuis operator $E$ satisfying $E^{2}=\mathrm{Id}$.
	\end{remark}
\begin{theorem}\label{Thm: product structure and decomposition}[{\cite[Theorem~5.3]{ST18}}]
	Let $\bigl(\mathfrak{g},[\cdot,\cdot,\cdot]_{\mathfrak g}\bigr)$ be a $3$-Lie algebra.
	Then $\mathfrak g$ admits a product structure $E$ if and only if it decomposes as
	\[
	\mathfrak g=\mathfrak g_{+}\oplus \mathfrak g_{-},
	\]
	where $\mathfrak g_{+}$ and $\mathfrak g_{-}$ are $3$-Lie subalgebras of $\mathfrak g$.
	In this case, $E$ is given by
	\[
	E(x,u)=(x,-u), \qquad \forall\, x\in\mathfrak g_{+},\; u\in\mathfrak g_{-}.
	\]
\end{theorem}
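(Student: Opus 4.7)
The plan is to establish the equivalence via an eigenspace decomposition in one direction and a direct verification of the integrability condition in the other. Because $\bk$ has characteristic zero and $E^{2}=\mathrm{Id}$, the operator $E$ is diagonalisable with eigenvalues in $\{+1,-1\}$, so I would let $\mathfrak g_{\pm}$ denote the corresponding eigenspaces and obtain $\mathfrak g=\mathfrak g_{+}\oplus\mathfrak g_{-}$ as vector spaces; the hypothesis $E\neq\pm\mathrm{Id}$ guarantees that both summands are nonzero. Under this identification the involution $E(x,u)=(x,-u)$ in the statement is precisely the canonical one attached to the decomposition, so the theorem reduces to showing that $\mathfrak g_{\pm}$ are $3$-Lie subalgebras if and only if this canonical involution satisfies the integrability condition.

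For the ``only if'' direction, I would specialise the integrability identity to a triple $x,y,z\in\mathfrak g_{+}$. Every $E$-image equals the identity on such vectors, so the right-hand side collapses to $4[x,y,z]_{\mathfrak g}-3E[x,y,z]_{\mathfrak g}$; equating with $E[x,y,z]_{\mathfrak g}$ on the left gives $E[x,y,z]_{\mathfrak g}=[x,y,z]_{\mathfrak g}$, hence $[x,y,z]_{\mathfrak g}\in\mathfrak g_{+}$. The symmetric computation with $x,y,z\in\mathfrak g_{-}$, where each $E$-image merely flips a sign, yields $E[x,y,z]_{\mathfrak g}=-[x,y,z]_{\mathfrak g}$ and shows that $\mathfrak g_{-}$ is a subalgebra as well.

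For the ``if'' direction, I would define $E$ by $E|_{\mathfrak g_{+}}=\mathrm{Id}$ and $E|_{\mathfrak g_{-}}=-\mathrm{Id}$; then $E^{2}=\mathrm{Id}$ and $E\neq\pm\mathrm{Id}$ are immediate. By trilinearity and skew-symmetry the integrability identity need only be checked on triples whose entries are homogeneous with respect to $\mathfrak g=\mathfrak g_{+}\oplus\mathfrak g_{-}$. In the two pure cases the arithmetic of the previous paragraph runs in reverse, using that $[x,y,z]_{\mathfrak g}$ is already an eigenvector of $E$. In the mixed cases, the sign pattern introduced by $E$ makes the four ``plain'' brackets on the right collapse to $\pm[x,y,z]_{\mathfrak g}$ (or to $0$) and the three ``$E$-brackets'' collapse to $\pm E[x,y,z]_{\mathfrak g}$, and the totals match the left-hand side without invoking any relation between $\mathfrak g_{+}$ and $\mathfrak g_{-}$ beyond the subalgebra property.

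The main obstacle will be the sign bookkeeping in these mixed cases; however, swapping the roles of $\mathfrak g_{+}$ and $\mathfrak g_{-}$ pairs the eight homogeneous triples into four essentially identical computations, so the verification remains short and mechanical. As an alternative route, I could instead invoke Remark~\ref{Remark: Nijenhuis operator and product structure} and specialise the Nijenhuis identity at $N=E$ with $E^{2}=\mathrm{Id}$; the same homogeneous case analysis then applies verbatim, which is the viewpoint adopted in the cited source.
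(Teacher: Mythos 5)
The paper states this theorem without proof, simply quoting it from \cite[Theorem~5.3]{ST18}, so there is no internal argument to compare against; your eigenspace proof is correct and is essentially the standard argument of the cited source. Both directions check out: diagonalisability of $E$ over a field of characteristic zero gives the decomposition, the pure triples in $\mathfrak g_{\pm}$ yield $4E[x,y,z]=\pm 4[x,y,z]$ and hence exactly the subalgebra conditions, and in the mixed cases the four plain brackets cancel to $0$ while the three $E$-brackets sum to $E[x,y,z]$, so the identity holds automatically, as you claim.
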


\bigskip
 \section{Modified Rota-Baxter operators of weight $\lambda$}\label{Modified Rota-Baxter operators of weight}
 \def\theequation{\arabic{section}.\arabic{equation}}
\setcounter{equation} {0}

 In this section,  we introduce the notions of  Rota-Baxter alegbras of weight $\lambda$ and relative (modified) Rota-Baxter alegbras of weight $\lambda$  give some properties.

\begin{defn}\cite{BGLW13}
		Let $(\frakg,[\cdot,\cdot,\cdot])$ be a $3$-Lie algebra and
		$T:\frakg\to\frakg$ a linear map.
			$T$ is called a \textbf{Rota-Baxter operator of weight $\lambda$} on $\frakg$ if
			\begin{align*}
				[T(x),T(y),T(z)]
				&=T\Big(
				[T(x),T(y),z]+[x,T(y),T(z)]+[T(x),y,T(z)]  \\
				&\quad +\lambda [T(x),y,z]+\lambda[x,T(y),z]+\lambda[x,y,T(z)]
				+\lambda^{2}[x,y,z]
				\Big).
			\end{align*}
\end{defn}

\begin{defn}Let $\left(\mathfrak{g},[\cdot, \cdot, \cdot]\right)$ and $\left(\mathfrak{h},\{\cdot, \cdot, \cdot\}\right)$ be $3$-Lie algebras.
	Let $\rho: \wedge^2 \mathfrak{g} \rightarrow \mathfrak{g l}(\mathfrak{h})$  be an action of a $3$-Lie algebra $\left(\mathfrak{g},[\cdot, \cdot, \cdot]\right)$ on a $3$-Lie algebra $\left(\mathfrak{h},\{\cdot, \cdot, \cdot\}\right)$,  and $\zeta:\wedge^2\mathfrak{h}\rightarrow \mathfrak{g l}(\mathfrak{g})$ an action of   $\left(\mathfrak{h},\{\cdot, \cdot, \cdot\}\right)$ on  $\left(\mathfrak{g},[\cdot, \cdot, \cdot]\right)$. A linear map $R: \mathfrak{h} \rightarrow \mathfrak{g}$ is called a \textbf{relative modified Rota-Baxter  operator of weight $\lambda \in \bk$} from a $3$-Lie algebra $\mathfrak{h}$ to a $3$-Lie algebra $\mathfrak{g}$ with respect to   actions $\rho$ and $\zeta$ if $ \forall u, v, w \in \mathfrak{h}$,
	\begin{align*}
		[R(u), R(v), R(w)]=&R\Big(\rho(R(u), R(v)) w+\rho(R(v), R (w)) u+\rho(R(w), R(u)) v+\lambda \{u, v, w\}\Big)\\
		&-\lambda \zeta(u,v)R(w)-\lambda \zeta(v,(w)R(u)-\lambda \zeta(w,u)R(v).
	\end{align*}
In this case, the quadruple $\left(  \left(\mathfrak{h},\{\cdot, \cdot, \cdot\}\right),\left(\mathfrak{g},[\cdot, \cdot, \cdot]\right),\rho,\zeta \right) $ is called a \textbf{relative $3$-Lie algebra pair}.
Moreover, the quintuple $\left( \left(\mathfrak{h},\{\cdot, \cdot, \cdot\}\right),\left(\mathfrak{g},[\cdot, \cdot, \cdot]\right),\rho,\zeta,R\right) $ is called a \textbf{relative modified Rota-Baxter $3$-Lie algebra of weight $\lambda$}.
	
	Moreover, If
	\begin{itemize}
		\item  [(a)]  $\mathfrak{h}=\mathfrak{g}$ and $\rho=\zeta=\mathrm{ad}$, we call
		$(\mathfrak{g},[\cdot,\cdot,\cdot],R)$ an \textbf{absolute modified Rota-Baxter $3$-Lie algebra of weight $\lambda$},
		or simply a modified Rota-Baxter $3$-Lie algebra of weight $\lambda$,
		or a modified Rota-Baxter $3$-Lie algebra;
		\item [(b)] $\zeta:\wedge^2\mathfrak{h}\rightarrow \mathfrak{g l}(\mathfrak{g})$ vanish, we call the quadruple $\left( \left(\mathfrak{h},\{\cdot, \cdot, \cdot\}\right),\left(\mathfrak{g},[\cdot, \cdot, \cdot]\right),\rho,R\right) $ is called a \textbf{relative  Rota-Baxter $3$-Lie algebra of weight $\lambda$}.
	\end{itemize}
\end{defn}

\begin{exam}
Let $(\mathfrak{g}, [\cdot, \cdot, \cdot])$ be a $3$-Lie algebra. Then $\mathrm{id}_\mathfrak{g}$ is a modified Rota-Baxter operator of weight $1$.
\end{exam}

\begin{exam}
Let $(\mathfrak{g}, [\cdot, \cdot, \cdot])$ be a $3$-Lie algebra.  Then $R$ is a modified Rota-Baxter operator if and only if   $-R$ is a modified Rota-Baxter operator.
\end{exam}

\begin{exam}
 Given a modified Rota-Baxter operator  $R$ and an automorphism $\psi\in \mathrm{Aut}(\frakg)$ of the $3$-Lie
algebra $\frakg$, then  $\psi^{-1} \circ R \circ \psi$  is a modified Rota-Baxter operator.
\end{exam}
\begin{exam}
Let $(\mathfrak{g}, [\cdot, \cdot, \cdot])$ be a $3$-Lie algebra whose non-zero brackets are given with
 respect to a basis $\{e_1, e_2, e_3\}$ by
\begin{align*}
[e_1, e_2, e_3]=e_1.
\end{align*}
Then $R=\left(
                                            \begin{array}{ccc}
                                             a_{11}  & a_{12} & a_{13}\\
                                             a_{21}& a_{22}& a_{23}\\
                                             a_{31}& a_{32}& a_{33}\\
                                            \end{array}
                                          \right)$ is a modified Rota-Baxter operator of weight $1$ if and only if
\begin{small}
\begin{align*}
[R(e_1), R(e_2), R(e_3)] &=R( [R(e_1), R(e_2), e_3]+[e_1, R(e_2), R(e_3)]+[R(e_1), e_2, R(e_3)]+[e_1, e_2, e_3])\\
\quad &\quad- [R(e_1), e_2, e_3]-[e_1, R(e_2), e_3]-[e_1, e_2, R(e_3)].
\end{align*}
\end{small}
After calculation, we can obtain
\begin{align*}
[R(e_1),R(e_2),R(e_3)]&=[a_{11}e_1, a_{22}e_2, a_{33}e_3]+[a_{11}e_1, a_{32}e_3, a_{23}e_2]\\
\quad &\quad+[a_{21}e_2, a_{12}e_1, a_{33}e_3]+[a_{21}e_2, a_{32}e_3, a_{13}e_1]\\
\quad\quad &\quad+[a_{31}e_3, a_{12}e_1, a_{23}e_2]+[a_{31}e_3, a_{22}e_2, a_{13}e_1]\\
&= (a_{11}a_{22}a_{33}-a_{11}a_{32}a_{23}-a_{21}a_{12}a_{33} + a_{21}a_{32}a_{13} + a_{31}a_{12}a_{23}-a_{31}a_{22}a_{13})e_1,
\end{align*}
on the other hand,
\begin{align*}
&R( [R(e_1), R(e_2), e_3]+[e_1, R(e_2), R(e_3)]+[R(e_1), e_2, R(e_3)]+[e_1, e_2, e_3])\\
\quad\quad &\quad- [R(e_1), e_2, e_3]-[e_1, R(e_2), e_3]-[e_1, e_2, R(e_3)]\\
&= R([a_{11}e_1, a_{22}e_2, e_3]) + R([a_{21}e_2, a_{12}e_1, e_3]) + R([a_{11}e_1, e_2, a_{33}e_3]) + R([a_{31}e_3, e_2, a_{13}e_1])\\
\quad\quad &\quad+R([e_1, a_{22}e_2, a_{33}e_3]) + R([e_1, a_{32}e_3, a_{23}e_2])+R([e_1, e_2, e_3])\\
\quad\quad &\quad- [a_{11}e_1, e_2, e_3]-[e_1, a_{22}e_2, e_3]-[e_1, e_2, a_{33}e_3]\\
&=\big((a_{11}a_{22}-a_{21}a_{12} + a_{11}a_{33}-a_{31}a_{13} + a_{22}a_{33}-a_{32}a_{23})a_{11}-a_{22}-a_{33}\big)e_1\\
\quad\quad &\quad+(a_{11}a_{22}-a_{21}a_{12} + a_{11}a_{33}-a_{31}a_{13} + a_{22}a_{33}-a_{32}a_{23}+1)a_{21}e_2\\
\quad\quad &\quad+(a_{11}a_{22}-a_{21}a_{12} + a_{11}a_{33}-a_{31}a_{13} + a_{22}a_{33}-a_{32}a_{23}+1)a_{31}e_3.
\end{align*}
Thus, $R$ is a modified Rota-Baxter operator of weight $1$ if and only if
\begin{align*}
&a_{11}a_{22}a_{33}-a_{11}a_{32}a_{23}-a_{21}a_{12}a_{33} + a_{21}a_{32}a_{13} + a_{31}a_{12}a_{23}-a_{31}a_{22}a_{13}\\
&=(a_{11}a_{22}-a_{21}a_{12} + a_{11}a_{33}-a_{31}a_{13} + a_{22}a_{33}-a_{32}a_{23})a_{11}-a_{22}-a_{33},
\end{align*}
and
\begin{align*}
&(a_{11}a_{22}-a_{21}a_{12} + a_{11}a_{33}-a_{31}a_{13} + a_{22}a_{33}-a_{32}a_{23}+1)a_{21}\\
&= (a_{11}a_{22}-a_{21}a_{12} + a_{11}a_{33}-a_{31}a_{13} + a_{22}a_{33}-a_{32}a_{23}+1)a_{31}\\
&=0.
\end{align*}
In particular, $R_1=\left(
                                            \begin{array}{ccc}
                                             1  & 0 & 0\\
                                             0& 1& 0\\
                                             0& 0& -1\\
                                            \end{array}
                                          \right)$  \text { and }   $R_2=\left(
                                         \begin{array}{ccc}
                                             1  & 0 & 0\\
                                             0& -1& 0\\
                                             0& 0& 1\\
                                            \end{array}
                                          \right)$  are modified Rota-Baxter operators of weight $1$.

\end{exam}
\begin{exam}
Let $(\mathfrak{g}, [\cdot, \cdot, \cdot])$ be a $3$-Lie algebra whose non-zero brackets are given with
 respect to a basis $\{e_1, e_2, e_3, e_4\}$ by
\begin{align*}
[e_2, e_3, e_4]=e_1.
\end{align*}
Then $R=\left(
                                            \begin{array}{cccc}
                                             a_{11}  & a_{12} & a_{13}& a_{14}\\
                                             a_{21}& a_{22}& a_{23}& a_{24}\\
                                             a_{31}& a_{32}& a_{33}& a_{34}\\
                                             a_{41}& a_{42}& a_{43}& a_{44}\\
                                            \end{array}
                                          \right)$ is a modified Rota-Baxter operator of weight $1$ if and only if
\begin{small}
\begin{align*}
[R(e_2), R(e_3), R(e_4)] &=R( [R(e_2), R(e_3), e_4]+[e_2, R(e_3), R(e_4)]+[R(e_2), e_3, R(e_4)]+[e_2, e_3, e_4])\\
\quad &\quad- [R(e_2), e_3, e_4]-[e_2, R(e_3), e_4]-[e_2, e_3, R(e_4)].
\end{align*}
\end{small}
After calculation, we can obtain
\begin{align*}
[R(e_2), R(e_3), R(e_4)]&=[a_{22}e_2, a_{33} e_3, a_{44}e_4]+[a_{22}e_2, a_{43} e_4, a_{34}e_3]+[a_{32}e_3, a_{23} e_2, a_{44}e_4]\\
&\quad + [a_{32}e_3, a_{43} e_4, a_{24}e_2]+[a_{42}e_4, a_{23} e_2, a_{34}e_3]+[a_{42}e_4, a_{33} e_3, a_{24}e_2]\\
&=(a_{22}a_{33}a_{44}-a_{22}a_{43}a_{34}-a_{32}a_{23}a_{44}+a_{32}a_{43}a_{24}+a_{42}a_{23}a_{34}-a_{42}a_{33}a_{24})e_1,
\end{align*}
on the other hand,
\begin{align*}
&R( [R(e_2), R(e_3), e_4]+[e_2, R(e_3), R(e_4)]+[R(e_2), e_3, R(e_4)]+[e_2, e_3, e_4])\\
\quad &\quad- [R(e_2), e_3, e_4]-[e_2, R(e_3), e_4]-[e_2, e_3, R(e_4)]\\
&=R([a_{22}e_2, a_{33}e_3, e_4]) +R([a_{32}e_3,a_{23}e_2, e_4])+ R([e_2, a_{33}e_3, a_{44}e_4])\\
&\quad+R([e_2, a_{43}e_4, a_{34}e_3]) +R([a_{22}e_2, e_3, a_{44}e_4])+R([a_{42}e_4, e_3, a_{24}e_2])\\
&\quad +R(e_1)-a_{22}e_1-a_{33}e_1-a_{44}e_1\\
&=(a_{22}a_{33}-a_{32}a_{23}+a_{33}a_{44}-a_{43}a_{34}+a_{22}a_{44}-a_{42}a_{24}+1)R(e_1)-a_{22}e_1-a_{33}e_1-a_{44}e_1\\
&=((a_{22}a_{33}-a_{32}a_{23}+a_{33}a_{44}-a_{43}a_{34}+a_{22}a_{44}-a_{42}a_{24}+1)a_{11}-a_{22}-a_{33}-a_{44})e_1\\
&\quad +(a_{22}a_{33}-a_{32}a_{23}+a_{33}a_{44}-a_{43}a_{34}+a_{22}a_{44}-a_{42}a_{24}+1)a_{21}e_2\\
&\quad+ (a_{22}a_{33}-a_{32}a_{23}+a_{33}a_{44}-a_{43}a_{34}+a_{22}a_{44}-a_{42}a_{24}+1)a_{31}e_3\\
&\quad+ (a_{22}a_{33}-a_{32}a_{23}+a_{33}a_{44}-a_{43}a_{34}+a_{22}a_{44}-a_{42}a_{24}+1)a_{41}e_4.
\end{align*}
Thus, $R$ is a modified Rota-Baxter operator of weight $1$ if and only if
\begin{align*}
&a_{22}a_{33}a_{44}-a_{22}a_{43}a_{34}-a_{32}a_{23}a_{44}+a_{32}a_{43}a_{24}+a_{42}a_{23}a_{34}-a_{42}a_{33}a_{24}\\
&=(a_{22}a_{33}-a_{32}a_{23}+a_{33}a_{44}-a_{43}a_{34}+a_{22}a_{44}-a_{42}a_{24}+1)a_{11}-a_{22}-a_{33}-a_{44},
\end{align*}
and
\begin{align*}
&(a_{22}a_{33}-a_{32}a_{23}+a_{33}a_{44}-a_{43}a_{34}+a_{22}a_{44}-a_{42}a_{24}+1)a_{21}\\
&=(a_{22}a_{33}-a_{32}a_{23}+a_{33}a_{44}-a_{43}a_{34}+a_{22}a_{44}-a_{42}a_{24}+1)a_{31}\\
&=(a_{22}a_{33}-a_{32}a_{23}+a_{33}a_{44}-a_{43}a_{34}+a_{22}a_{44}-a_{42}a_{24}+1)a_{41}=0.
\end{align*}
In particular, $R_1=\left(
                                            \begin{array}{cccc}
                                             1  & 0 & 0 & 0\\
                                             0& 1& 0& 0\\
                                             0& 0& -1 & 0\\
                                               0& 0& 0 & -1\\
                                            \end{array}
                                          \right)$ \text { and }   $R_2=\left(
                                            \begin{array}{cccc}
                                             1  & 0 & 0 & 0\\
                                             0& -1& 0& 0\\
                                             0& 0& 1 & 0\\
                                               0& 0& 0 & -1\\
                                            \end{array}
                                          \right)$ are modified Rota-Baxter operators of weight $1$.

\end{exam}
\begin{remark}
	Let $R:\mathfrak{g}\to\mathfrak{g}$ be a linear map satisfying $R^{2}=\mathrm{id}$.
	Then the following conditions are equivalent:
	\begin{itemize}
		\item[(a)] $R$ is a modified Rota--Baxter operator of weight $1$;
		
		\item[(b)] $R$ is a Nijenhuis operator;
		
		\item[(c)] $R$ is a product structure;
		
		\item[(d)] There exists a direct sum decomposition of vector spaces
		\[
		\mathfrak{g}=\mathfrak{g}_{1}\oplus \mathfrak{g}_{2},
		\]
		where $\mathfrak{g}_{1}$ and $\mathfrak{g}_{2}$ are subalgebras of $\mathfrak{g}$.
	\end{itemize}
\end{remark}

\begin{proof}
	Assume that $R$ is a modified Rota-Baxter operator of weight $1$.
	A direct computation shows that $R$ satisfies the integrability condition of a product structure.
	Since $R^{2}=\mathrm{id}$, it follows that $R$ defines a product structure.
	Conversely, every product structure with $R^{2}=\mathrm{id}$ is a modified Rota-Baxter operator of weight $1$.
	Hence, $(a)$ and $(c)$ are equivalent.
	
	The equivalence between $(b)$ and $(c)$ follows from Remark~\ref{Remark: Nijenhuis operator and product structure},
	and the equivalence between $(c)$ and $(d)$ follows from Theorem~\ref{Thm: product structure and decomposition}.
	Therefore, the four conditions $(a)$--$(d)$ are equivalent.
\end{proof}

\begin{prop}
Let $(\mathfrak{g}, [\cdot, \cdot, \cdot])$ be a $3$-Lie algebra.  Then $R$ is a  Rota-Baxter operator of weight $\lambda$ if and only if  $ 2R+ \lambda~\mathrm{id}$ is a modified Rota-Baxter operator of weight $\lambda^2$.
\end{prop}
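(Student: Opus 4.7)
The plan is a direct (but carefully bookkept) computation. Set $S=R$ and $B=2R+\lambda\,\mathrm{id}$. I want to expand both sides of the modified Rota--Baxter identity of weight $\lambda^{2}$ for $B$ and show that the difference equals $8$ times the Rota--Baxter identity of weight $\lambda$ for $R$. To keep the expressions manageable I will introduce the shorthands
\[
A = [Ru,Rv,w]+[u,Rv,Rw]+[Ru,v,Rw], \qquad
D = [Ru,v,w]+[u,Rv,w]+[u,v,Rw],
\]
both trilinear in $(u,v,w)$.

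First I will expand $[B(u),B(v),B(w)]$ by $\bk$-trilinearity. Collecting the eight terms one obtains
\[
[B(u),B(v),B(w)] \;=\; 8[Ru,Rv,Rw] + 4\lambda A + 2\lambda^{2} D + \lambda^{3}[u,v,w].
\]
Next I expand the ``inner bracket'' of the modified Rota--Baxter identity of weight $\lambda^{2}$, namely
\[
[B(u),B(v),w]+[u,B(v),B(w)]+[B(u),v,B(w)]+\lambda^{2}[u,v,w].
\]
Term by term expansion yields $4A+4\lambda D+3\lambda^{2}[u,v,w]$, and adding the extra $\lambda^{2}[u,v,w]$ gives $4\bigl(A+\lambda D+\lambda^{2}[u,v,w]\bigr)$. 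Applying $B=2R+\lambda\,\mathrm{id}$ produces
\[
8R\bigl(A+\lambda D+\lambda^{2}[u,v,w]\bigr)+4\lambda A + 4\lambda^{2} D + 4\lambda^{3}[u,v,w].
\]

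For the correction terms I need to identify $[u,v,B(w)]+[v,w,B(u)]+[w,u,B(v)]$. Using only the cyclic invariance of a skew-symmetric $3$-bracket (a $3$-cycle is even in $S_{3}$) I have $[v,w,Ru]=[Ru,v,w]$ and $[w,u,Rv]=[u,Rv,w]$, hence this cyclic sum is exactly $D$, so the correction contributes $-\lambda^{2}(2D+3\lambda[u,v,w])$. Combining, the RHS of the modified Rota--Baxter equation for $B$ is
\[
8R\bigl(A+\lambda D+\lambda^{2}[u,v,w]\bigr)+4\lambda A + 2\lambda^{2} D + \lambda^{3}[u,v,w].
\]
Subtracting the earlier expression for the LHS gives
\[
[B(u),B(v),B(w)]-\text{RHS} \;=\; 8\Bigl([Ru,Rv,Rw]-R\bigl(A+\lambda D+\lambda^{2}[u,v,w]\bigr)\Bigr),
\]
and the bracket on the right is precisely the defect of the Rota--Baxter identity of weight $\lambda$ for $R$. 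Since $\mathrm{char}\,\bk=0$, the vanishing of one side is equivalent to the vanishing of the other, which gives the claimed equivalence in both directions.

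The only real obstacle is notational: one must organize the roughly two dozen trilinear terms produced by the expansions into the symmetric sums $A$ and $D$ without sign errors, and must correctly use the cyclic invariance (not full $S_{3}$-invariance) of the $3$-bracket when rewriting the correction terms. Once the shorthands $A$ and $D$ are in place, no auxiliary lemma beyond multilinearity and skew-symmetry of $[\cdot,\cdot,\cdot]$ is needed.
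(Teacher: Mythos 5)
Your proof is correct and follows essentially the same direct expansion as the paper's own argument: your bookkeeping with the shorthands $A$ and $D$ checks out, and the resulting identity that the difference of the two sides equals $8\bigl([Ru,Rv,Rw]-R(A+\lambda D+\lambda^{2}[u,v,w])\bigr)$ is exactly what the paper's longhand computation establishes. The only difference is cosmetic --- the paper substitutes the Rota--Baxter identity into $8[Rx,Ry,Rz]$ and writes out one direction, whereas you isolate the defect and invoke $\mathrm{char}\,\bk=0$ to obtain both directions at once.
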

\begin{proof}
For any $x, y, z\in \mathfrak{g}$, we have
\begin{small}
\begin{align*}
& [(2R+\lambda~\mathrm{id})(x),  (2R+\lambda~\mathrm{id})(y), (2R+ \lambda~\mathrm{id})(z)]\\
\quad &=[2Rx+ \lambda x,  2Ry+ \lambda y, 2Rz+ \lambda z]\\
\quad &= 8[Rx, Ry, Rz]+4\lambda[Rx, Ry, z]+4\lambda[R(x), y, Rz]+4\lambda[x, Ry, Rz]\\
\quad &\quad+2\lambda^2[x, y, Rz]+2\lambda^2[x, Ry, z]+2\lambda^2[Rx, y, z]+\lambda^3[x, y, z]\\
\quad &= 8R\Big([Rx, Ry, z] + [Rx, y, Rz] + [x, Ry, Rz]+\lambda[Rx, y, z]+\lambda[x, Ry, z]+\lambda[x, y, Rz]+\lambda^2[x, y, z]\Big)\\
\quad &\quad+4\lambda[Rx, Ry, z]+4\lambda[R(x), y, Rz]+4\lambda[x, Ry, Rz]\\
\quad &\quad+2\lambda^2[x, y, Rz]+2\lambda^2[x, Ry, z]+2\lambda^2[Rx, y, z]+\lambda^3[x, y, z]\\
\quad &= (2R+\lambda~ \mathrm{id})\Big( [(2R+\lambda ~\mathrm{id})x, (2R+\lambda~ \mathrm{id})y, z]+[x, (2R+\lambda ~\mathrm{id})y, (2R+\lambda ~\mathrm{id})z]\\
\quad &\quad+[(2R+\lambda~\mathrm{id})x, y, (2R+ \lambda~\mathrm{id})z]+\lambda^2[x, y, z]\Big)-\lambda^2 [(2R+\lambda~\mathrm{id})x, y, z]\\
&\quad-\lambda^2[x, (2R+\lambda~\mathrm{id})y, z]-\lambda^2[x, y, (2R+\lambda ~\mathrm{id})z].
\end{align*}
\end{small}
And the proof is finished.
\end{proof}

 Let $(\mathfrak{g}, [\cdot, \cdot, \cdot])$ be a $3$-Lie algebra and  $R$ be a modified  Rota-Baxter operator of weight $\lambda$,  by \cite{HSZ23}, in order to make $(\mathfrak{g}, [\cdot, \cdot, \cdot]_R)$ a $3$-Lie algebra, we always assume $\mathfrak{g}^1\subset \mathcal{C}(\mathfrak{g})$, where
\begin{align}
[x, y, z]_R=[Rx, Ry, z]+[x, Ry, Rz]+[Rx, y, Rz]+\lambda[x, y, z],\ \ \forall x, y, z\in \mathfrak{g}. \label{3.2}
\end{align}

\begin{prop}
Let $R$ be a  modified  Rota-Baxter operator of weight $\lambda$ on a $3$-Lie algebra $(\mathfrak{g}, [\cdot, \cdot, \cdot])$.  Then $R$ is  a  modified  Rota-Baxter operator of weight $\lambda$ on a 3-Lie algebra $(\mathfrak{g}, [\cdot, \cdot, \cdot]_R)$.
\end{prop}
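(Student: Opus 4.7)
The plan is to verify, by direct substitution, that $R$ satisfies the modified Rota-Baxter identity of weight $\lambda$ with respect to the new bracket $[\cdot,\cdot,\cdot]_R$. Unpacking the definition, this amounts to proving
\begin{align*}
[Rx,Ry,Rz]_R &= R\bigl([Rx,Ry,z]_R + [x,Ry,Rz]_R + [Rx,y,Rz]_R + \lambda[x,y,z]_R\bigr)\\
&\quad - \lambda\bigl([Rx,y,z]_R + [x,Ry,z]_R + [x,y,Rz]_R\bigr)
\end{align*}
for all $x,y,z\in\mathfrak{g}$. The only tools required are the definition \eqref{3.2} of $[\cdot,\cdot,\cdot]_R$ together with the modified Rota-Baxter identity satisfied by $R$ on the original bracket, which I first rewrite in the convenient form
\[
R[a,b,c]_R \;=\; [Ra,Rb,Rc] + \lambda\bigl([Ra,b,c] + [a,Rb,c] + [a,b,Rc]\bigr).
\]

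First I would apply this rewritten identity four times to the right-hand side of the target equation, namely to each of $R[Rx,Ry,z]_R$, $R[x,Ry,Rz]_R$, $R[Rx,y,Rz]_R$, and $\lambda R[x,y,z]_R$. Second, I would use \eqref{3.2} to unfold the remaining $[\cdot,\cdot,\cdot]_R$-brackets on both sides, namely $[Rx,Ry,Rz]_R$ on the left and $[Rx,y,z]_R$, $[x,Ry,z]_R$, $[x,y,Rz]_R$ on the right. After these two steps, both sides become $\bk$-linear combinations of monomials $[R^a x, R^b y, R^c z]$ with $a,b,c\in\{0,1,2\}$, weighted by $1$, $\lambda$, or $\lambda^2$.

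The proof is then finished by matching coefficients. The top-degree brackets $[R^2 x, R^2 y, Rz]$, $[Rx, R^2 y, R^2 z]$ and $[R^2 x, Ry, R^2 z]$ appear exactly once on each side; the bracket $[Rx, Ry, Rz]$ has coefficient $\lambda$ on the left and total coefficient $4\lambda - 3\lambda = \lambda$ on the right (four $\lambda$-contributions from the four applications of the modified Rota-Baxter identity, and three $\lambda$-cancellations from the three negated $[\cdot,\cdot,\cdot]_R$ unfoldings); and the six mixed brackets like $[R^2 x, Ry, z]$, together with the weight-$\lambda^2$ brackets $[Rx,y,z]$, $[x,Ry,z]$, $[x,y,Rz]$, all have coefficient $0$ on the left and cancel in pairs on the right between the modified Rota-Baxter expansions and the corresponding $-\lambda[\cdot,\cdot,\cdot]_R$ terms.

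The main obstacle is the combinatorial bookkeeping, since each monomial typically arises from several expansions and the multiplicities must be tracked carefully. I note in passing that the standing assumption $\mathfrak{g}^1\subseteq\mathcal{C}(\mathfrak{g})$ plays no role in this particular verification: it is needed only to guarantee that $(\mathfrak{g},[\cdot,\cdot,\cdot]_R)$ is a $3$-Lie algebra in the first place, not to check the modified Rota-Baxter identity for $R$ relative to the new bracket.
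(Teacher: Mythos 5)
Your proposal is correct and is essentially the same argument as the paper's: a direct verification that expands everything in terms of the original bracket and invokes the modified Rota--Baxter identity, the only (cosmetic) difference being that you start from the right-hand side and use the tidy reformulation $R[a,b,c]_R=[Ra,Rb,Rc]+\lambda\bigl([Ra,b,c]+[a,Rb,c]+[a,b,Rc]\bigr)$, whereas the paper expands $[Rx,Ry,Rz]_R$ and applies the identity to the three terms $[R^2x,R^2y,Rz]$, $[Rx,R^2y,R^2z]$, $[R^2x,Ry,R^2z]$ before regrouping. Your coefficient bookkeeping (the $4\lambda-3\lambda=\lambda$ count for $[Rx,Ry,Rz]$ and the pairwise cancellation of the six mixed and three $\lambda^2$ terms) checks out, and your side remark that $\mathfrak{g}^1\subseteq\mathcal{C}(\mathfrak{g})$ is needed only for $[\cdot,\cdot,\cdot]_R$ to be a $3$-Lie bracket is accurate.
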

\begin{proof} For any  $x, y, z\in \mathfrak{g}$, we have
\begin{small}
\begin{align*}
&[R(x), R(y), R(z)]_R\\
&=[R^2(x), R^2(y), R(z)]+[R(x), R^2(y), R^2(z)]+[R^2(x), R(y), R^2(z)]+\lambda[R(x), R(y), R(z)]\\
&= R( [R^2(x), R^2(y), z]+[R(x), R^2(y), R(z)]+[R^2(x), R(y), R(z)]+\lambda[R(x), R(y), z])- \lambda[R^2(x), R(y), z]\\
\quad&\quad-\lambda[R(x), R^2(y), z]-\lambda[Rx, R(y), R(z)]+R( [R(x), R^2(y), R(z)]+[x, R^2(y), R^2z]+[R(x), R(y), R^2(z)]\\
\quad&\quad+\lambda[x, R(y), R(z)])- \lambda[R(x), R(y), R(z)]-\lambda[x, R^2(y), R(z)]-\lambda[x, R(y), R^2(z)]+R( [R^2(x), R(y), R(z)]\\
\quad&\quad+[R(x), R(y), R^2(z)]+[R^2(x), y, R^2(z)]+\lambda[R(x), y, R(z)])- \lambda[R^2(x), y, R(z)]-\lambda[R(x), R(y), R(z)]\\
\quad&\quad-\lambda[R(x), y, R^2(z)]+\lambda[R(x), R(y), R(z)]\\
&=R( [R^2(x), R^2(y), z]+[R(x), R^2(y), R(z)]+[R^2(x), R(y), R(z)]+\lambda[R(x), R(y), z])- \lambda[R^2(x), R(y), z]\\
\quad&\quad-\lambda[R(x), R^2(y), z]+R( [R(x), R^2(y), R(z)]+[x, R^2(y), R^2z]+[R(x), R(y), R^2(z)]+\lambda[x, R(y), R(z)])\\
\quad&\quad-\lambda [R(x), R(y), R(z)]-\lambda[x, R^2(y), R(z)]-\lambda[x, R(y), R^2(z)]+R( [R^2(x), R(y), R(z)]+[R(x), R(y), R^2(z)]\\
\quad&\quad+[R^2(x), y, R^2(z)]+\lambda[R(x), y, R(z)])- \lambda[R^2(x), y, R(z)]-\lambda[R(x), R(y), R(z)]-\lambda[R(x), y, R^2(z)]\\
&=R( [R(x), R(y), z]_R+[x, R(y), R(z)]_R+[R(x), y, R(z)]_R+\lambda[x, y, z]_R)\\
\quad&\quad-\lambda [R(x), y, z]_R-\lambda[x, R(y), z]_R-\lambda[x, y, R(z)]_R.
\end{align*}
\end{small}
Therefore, $R$ is  a  modified  Rota-Baxter operator of weight $\lambda$ on a $3$-Lie algebra $(\mathfrak{g}, [\cdot, \cdot, \cdot]_R)$.
\end{proof}
\medskip

\section{The constructions of modified Rota-Baxter operators of weight  $\lambda$}\label{The constructions of modified Rota-Baxter operators}
\def\theequation{\arabic{section}.\arabic{equation}}
\setcounter{equation} {0}

In this section, according to constructions of $3$-Lie algebras,  we give various constructions of  modified Rota-Baxter operators of weight  $\lambda$.
\begin{lem}(\cite{B10})\label{4.1}
Let $(\mathfrak{g}, [\cdot, \cdot])$ be a Lie algebra and $\mathfrak{g}^{\ast}$ be the dual space of $\mathfrak{g}$.
Suppose that $f\in \mathfrak{g}^{\ast}$ satisfies $f([x, y]) = 0$ for all $x, y \in  \mathfrak{g}$.
Then there is a  $3$-Lie algebra structure on $\mathfrak{g}$ given by
\begin{align*}
[x, y, z]_f = f(x)[y, z] + f(y)[z, x] + f(z)[x, y], \quad \quad \forall x, y, z \in \mathfrak{g}.
\end{align*}
\end{lem}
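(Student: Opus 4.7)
The plan is to verify that the trilinear map $[\cdot,\cdot,\cdot]_f$ satisfies the two defining axioms of a $3$-Lie algebra. Skew-symmetry is essentially automatic: since $[\cdot,\cdot]$ is antisymmetric, swapping any two arguments in the cyclic sum $f(x)[y,z]+f(y)[z,x]+f(z)[x,y]$ reverses the overall sign, so $[\cdot,\cdot,\cdot]_f$ is totally alternating.

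The heart of the proof is the fundamental identity. Before expanding, I would record one crucial simplification: the hypothesis $f([\cdot,\cdot])=0$ forces
\[
f([u,v,w]_f)=f(u)\,f([v,w])+f(v)\,f([w,u])+f(w)\,f([u,v])=0,
\]
so every subterm of the form $f([\star,\star,\star]_f)$ arising in the expansion of a nested $[\cdot,\cdot,\cdot]_f$ bracket vanishes. This kills one of the three summands in each nested expansion at the outset.

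Next I would expand both sides of
\[
[x,y,[u,v,w]_f]_f=[[x,y,u]_f,v,w]_f+[u,[x,y,v]_f,w]_f+[u,v,[x,y,w]_f]_f
\]
and sort the remaining terms by their scalar coefficient $f(a)f(b)$, where $\{a,b\}$ runs over the two-element subsets of $\{x,y,u,v,w\}$. For each such pair, matching the two sides reduces to a single instance of the Jacobi identity in $\mathfrak{g}$, used in the form $[a,[b,c]]=[[a,b],c]+[b,[a,c]]$. For example, the coefficient of $f(x)f(u)$ on the left is $[y,[v,w]]$, while on the right only the summands $[u,[x,y,v]_f,w]_f$ and $[u,v,[x,y,w]_f]_f$ contribute, giving $[[y,v],w]+[v,[y,w]]$; these are equal by Jacobi, and all other coefficients go through the same pattern.

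The main obstacle is purely bookkeeping: each side expands into on the order of twenty nonzero Lie-bracket terms, and one must check that every $f(a)f(b)$-coefficient matches. Once a clean table of coefficients is set up, the verification collapses to a single Jacobi application per coefficient, and no further hypotheses on $\mathfrak{g}$ or on $f$ beyond $f([\cdot,\cdot])=0$ are needed.
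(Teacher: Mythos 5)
Your verification is correct. The paper itself gives no proof of this lemma -- it is quoted from the reference \cite{B10} -- so there is nothing internal to compare against; your direct expansion is the standard argument, and the key observation that $f([\cdot,\cdot,\cdot]_f)=0$ (which kills the terms $f(A)[x,y]$, $f(B)[v,w]$, etc.\ in every nested bracket) is exactly what makes the bookkeeping tractable. One tiny refinement: not every surviving coefficient is settled by a Jacobi identity -- the pairs $f(a)f(b)$ with both $a,b\in\{u,v,w\}$ (and likewise $f(x)f(y)$) are absent from the left side and cancel in pairs on the right side directly from antisymmetry of $[\cdot,\cdot]$, while only the six mixed pairs $f(a)f(b)$ with $a\in\{x,y\}$, $b\in\{u,v,w\}$ each require one application of Jacobi, as in your worked example.
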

\begin{defn}
 A linear map $R: \frakg\rightarrow \frakg$ is called a modified Rota-Baxter operator of weight $\lambda\in \mathbf{k}$, or simply a modified Rota-Baxter operator on a Lie algebra $(\frakg, [\cdot, \cdot])$ if $R$ satisfies the following condition
 \begin{align*}
[R(x), R(y)]=R\Big([R(x), y]+[x,R(y)]\Big)+\lambda [x, y], \quad \quad \forall x, y\in \mathfrak{g}.
\end{align*}
\end{defn}
\begin{theorem}\label{4.2}
Let $R$ be a   modified Rota-Baxter operator of weight $\lambda$ on a Lie algebra $(\mathfrak{g}, [\cdot, \cdot])$, $f\in \mathfrak{g}^{\ast}$ satisfies $f([x, y]) = 0$ for any $x, y \in  \mathfrak{g}$.  Then $R$ is  a   modified Rota-Baxter operator of weight  $\lambda$ on a $3$-Lie algebra $(\mathfrak{g}, [\cdot,\cdot,\cdot]_f)$ if and only if  $R$ satisfies
\begin{align*}
&R\Big(f(x)[Ry, Rz]+f(y)[Rz, Rx]+f(z)[Rx, Ry]+\lambda f(x)[y, z] +\lambda f(y)[z, x] + \lambda f(z)[x, y]\Big)\\
&\quad-\lambda f(x)[Ry, z]-\lambda f(x)[y, Rz] - \lambda f(y)[Rz, x]-\lambda  f(y)[z, Rx]-\lambda f(z)[Rx, y]-\lambda f(z)[x, Ry]\\
&\quad-2\lambda f(Ry)[z, x]-2\lambda f(Ry)[z, x]-2\lambda f(Rz)[x, y]=0.
\end{align*}
\end{theorem}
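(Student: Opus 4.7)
The plan is a direct verification. The defining condition for $R$ to be a modified Rota-Baxter operator of weight $\lambda$ on the $3$-Lie algebra $(\mathfrak{g},[\cdot,\cdot,\cdot]_f)$ reads
\[
[Rx,Ry,Rz]_f
= R\bigl([Rx,Ry,z]_f+[x,Ry,Rz]_f+[Rx,y,Rz]_f+\lambda[x,y,z]_f\bigr)
-\lambda\bigl([Rx,y,z]_f+[x,Ry,z]_f+[x,y,Rz]_f\bigr).
\]
I would expand each of the eight ternary brackets using the defining formula $[a,b,c]_f=f(a)[b,c]+f(b)[c,a]+f(c)[a,b]$ from Lemma \ref{4.1}, converting both sides into $\bk$-linear combinations of Lie brackets scaled by the six functionals $f(x),f(y),f(z),f(Rx),f(Ry),f(Rz)$. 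The crucial step is then to apply the Lie-algebra modified Rota-Baxter identity $[Ra,Rb]=R([Ra,b]+[a,Rb])+\lambda[a,b]$ to each of the three brackets $[Ry,Rz],[Rz,Rx],[Rx,Ry]$ produced on the left-hand side.

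After this substitution, the contributions weighted by $f(Rx),f(Ry),f(Rz)$ and appearing inside $R(\cdot)$ on the left match exactly those produced on the right by the expansion of $[Rx,Ry,z]_f,[x,Ry,Rz]_f,[Rx,y,Rz]_f$; hence they cancel in the difference of the two sides. The residual diagonal $f(R-)$-terms, namely $\lambda\bigl(f(Rx)[y,z]+f(Ry)[z,x]+f(Rz)[x,y]\bigr)$ coming from the left, combine with the identical triple appearing with opposite sign inside $-\lambda\bigl([Rx,y,z]_f+[x,Ry,z]_f+[x,y,Rz]_f\bigr)$ to yield exactly the doubled contribution $-2\lambda\bigl(f(Rx)[y,z]+f(Ry)[z,x]+f(Rz)[x,y]\bigr)$ displayed in the claimed identity. (The statement contains a harmless typographical repetition: one of the two occurrences of $-2\lambda f(Ry)[z,x]$ should read $-2\lambda f(Rx)[y,z]$.)

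What remains in the difference involves only the scalars $f(x),f(y),f(z)$: from the right-hand side we obtain $R\bigl(f(x)[Ry,Rz]+f(y)[Rz,Rx]+f(z)[Rx,Ry]+\lambda f(x)[y,z]+\lambda f(y)[z,x]+\lambda f(z)[x,y]\bigr)$ out of the $R$-block, together with $-\lambda f(x)([Ry,z]+[y,Rz])-\lambda f(y)([Rz,x]+[z,Rx])-\lambda f(z)([Rx,y]+[x,Ry])$ out of the subtracted $-\lambda$-block, and there is no matching contribution on the left. Setting the net expression equal to zero produces precisely the identity in the statement; each direction of the if-and-only-if follows by reading this chain of equalities forwards or backwards.

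The computation is entirely routine, and the hypothesis $f([x,y])=0$ is not used in the verification itself: it is needed only to guarantee, via Lemma \ref{4.1}, that $(\mathfrak{g},[\cdot,\cdot,\cdot]_f)$ is a $3$-Lie algebra in the first place. The main obstacle is purely bookkeeping: six distinct coefficient types across three cyclic positions produce a dense expansion. Organizing all terms from the outset by their scalar coefficient $f(-)$, rather than expanding one bracket at a time, makes the cancellations transparent and the residual identity easy to read off.
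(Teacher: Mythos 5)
Your proposal is correct and follows essentially the same route as the paper: expand all ternary brackets via $[a,b,c]_f=f(a)[b,c]+f(b)[c,a]+f(c)[a,b]$, substitute the Lie-algebra identity $R([Ra,b]+[a,Rb])=[Ra,Rb]-\lambda[a,b]$ into the $f(Rx),f(Ry),f(Rz)$-weighted terms, and read off the residual expression; your grouping by scalar coefficient is just a cleaner bookkeeping of the same cancellation the paper performs. Your observation that one occurrence of $-2\lambda f(Ry)[z,x]$ in the displayed identity should read $-2\lambda f(Rx)[y,z]$ is also right (the paper's own proof has the corrected term), as is the remark that $f([x,y])=0$ enters only through Lemma~\ref{4.1}.
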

\begin{proof} For any $x, y, z\in \mathfrak{g}$, we have
\begin{align*}
[Rx, Ry, Rz]_f&=f(Rx)[Ry, Rz] + f(Ry)[Rz, Rx] + f(Rz)[Rx, Ry]\\
&=R(f(Rx)[Ry, z]+f(Rx)[y, Rz])+R(f(Ry)[Rz, x]+f(Ry)[z, Rx])\\
\quad&\quad+R(f(Rz)[Rx, y]+f(Rz)[Rx, y])+\lambda f(Rx)[y, z]+\lambda f(Ry)[z, x]\\
 \quad&\quad+\lambda f(Rz)[x, y].
\end{align*}
On the other hand, we have
\begin{align*}
&R( [Rx, Ry, z]_f+[x, Ry, Rz]_f+[Rx, y, Rz]_f+\lambda[x, y, z]_f)- \lambda[Rx, y, z]_f-\lambda[x, Ry, z]_f-\lambda[x, y, Rz]_f\\
\quad&=R\Big( f(Rx)[Ry, z] + f(Ry)[z, Rx] + f(z)[Rx, Ry]+f(x)[Ry, Rz] + f(Ry)[Rz, x] + f(Rz)[x, Ry] \\
\quad&\quad+ f(Rx)[y, Rz] + f(y)[Rz, Rx] + f(Rz)[Rx, y]+\lambda f(x)[y, z] +\lambda f(y)[z, x] + \lambda f(z)[x, y]\Big)\\
&\quad-\lambda f(Rx)[y, z] -\lambda f(y)[z, Rx]-\lambda f(z)[Rx, y]-\lambda f(x)[Ry, z] -\lambda f(Ry)[z, x] - \lambda f(z)[x, Ry]\\
&\quad-\lambda f(x)[y, Rz] -\lambda f(y)[Rz, x] - \lambda f(Rz)[x, y]\\
&=R\Big(f(x)[Ry, Rz]+f(y)[Rz, Rx]+f(z)[Rx, Ry]+\lambda f(x)[y, z] +\lambda f(y)[z, x] + \lambda f(z)[x, y]\Big)\\
&\quad-\lambda f(x)[Ry, z]-\lambda f(x)[y, Rz] - \lambda f(y)[Rz, x]-\lambda  f(y)[z, Rx]-\lambda f(z)[Rx, y]-\lambda f(z)[x, Ry]\\
&\quad +[Rx, Ry, Rz]_f-2\lambda f(Ry)[z, x]-2\lambda f(Rx)[y, z]-2\lambda f(Rz)[x, y].
\end{align*}
 Then $R$ is  a   modified Rota-Baxter operator of weight  $\lambda$ on a $3$-Lie algebra $(\mathfrak{g}, [\cdot,\cdot,\cdot]_f)$ if and only if  $R$ satisfies
\begin{align*}
&R\Big(f(x)[Ry, Rz]+f(y)[Rz, Rx]+f(z)[Rx, Ry]+\lambda f(x)[y, z] +\lambda f(y)[z, x] + \lambda f(z)[x, y]\Big)\\
&\quad-\lambda f(x)[Ry, z]-\lambda f(x)[y, Rz] - \lambda f(y)[Rz, x]-\lambda  f(y)[z, Rx]-\lambda f(z)[Rx, y]-\lambda f(z)[x, Ry]\\
&\quad-2\lambda f(Ry)[z, x]-2\lambda f(Ry)[z, x]-2\lambda f(Rz)[x, y]=0.
\end{align*}
\end{proof}

\begin{exam}
Let $(\mathfrak{g}, [\cdot, \cdot])$ be the 3-dimensional Lie algebra given by
\begin{eqnarray*}
[e_1, e_2]=e_2,
\end{eqnarray*}
where $\{e_1, e_2, e_3\}$ is a basis of $\mathfrak{g}$. By Lemma \ref{4.1}, the trace function $f\in \mathfrak{g}^{\ast}$, where
$\begin{cases}
f(e_1)=1,\\
f(e_2)=0,\\
f(e_3)=1,
\end{cases}$   induces
a 3-Lie algebra $(\mathfrak{g}, [\cdot, \cdot, \cdot]_f)$ defined with the same basis by
\begin{eqnarray*}
[e_1, e_2, e_3]_f=e_2.
\end{eqnarray*}
Consider a linear map $R: \mathfrak{g}\rightarrow \mathfrak{g}$ defined by
$\left(
                                            \begin{array}{ccc}
                                              a_{11} & a_{12} & a_{13}\\
                                             a_{21} & a_{22} & a_{23} \\
                                              a_{31} & a_{32} & a_{33} \\
                                            \end{array}
                                          \right)$ with respect to the basis $\{e_1, e_2, e_3\}$.
   Define
   \begin{eqnarray*}
Re_1 = a_{11}e_1 + a_{21}e_2 + a_{31}e_3,\ \  Re_2 = a_{12}e_1 + a_{22}e_2 + a_{32}e_3, \ \ Re_3 = a_{13}e_1 + a_{23}e_2 + a_{33}e_3.
   \end{eqnarray*}
In order to obtain that $R$ is a modified Rota-Baxter operator of weight $1$ on the Lie algebra $\mathfrak{g}$, we need
\begin{eqnarray*}
&&[Re_i, Re_j] = R([Re_i, e_j] + [e_i, Re_j])+[e_i, e_j],\ \  i, j=1,2,3.
\end{eqnarray*}
By a straightforward computation, we conclude that $R$ is a modified Rota-Baxter operator of weight $1$  on the Lie algebra $\mathfrak{g}$ if and
only if
\begin{eqnarray*}
&&  a_{11}=-a_{22}, \quad  a_{22}^2+a_{12}a_{21}+1=0.
\end{eqnarray*}
By Theorem \ref{4.2},  $R$ is  a modified Rota-Baxter operator of weight $1$ on the 3-Lie algebra $(\mathfrak{g}, [\cdot, \cdot, \cdot]_f)$ if and only if
\begin{align*}
&2a_{33}+2a_{22}a_{32}a_{23} + a_{21}a_{32}a_{13} + a_{31}a_{12}a_{23}-2a_{22}=0,
\end{align*}
and
\begin{align*}
 a_{31}a_{13}+a_{32}a_{23}=2.
 \end{align*}
\end{exam}

\begin{defn} \cite{GL25} A linear map $R: \frakg\rightarrow \frakg$ is called a modified Rota-Baxter operator of weight $\lambda$ on a pre-Lie algebra $(\frakg, \ast)$ if $R$ satisfies the following condition
\begin{align*}
 R(x)\ast R(y) = R(R(x)\ast y + x\ast R(y))+\lambda x\ast y, \ \ \forall x, y \in \mathfrak{g},
\end{align*}
\end{defn}

The following conclusions can be directly verified by definition, so the proof is omitted.

\begin{lem}\label{4.7} Let $R$ be a modified Rota-Baxter operator of weight $\lambda$ on a pre-Lie algebra $(\frakg, \ast)$. Then $R$ is a modified Rota-Baxter operator of weight $\lambda$ on a Lie algebra $(\mathfrak{g},[\cdot, \cdot]_\ast)$, where $[\cdot, \cdot]_\ast$ is defined by
\begin{align*}
[x, y]_\ast=x\ast y-y\ast x, \ \ \forall x, y \in \mathfrak{g}.
\end{align*}
\end{lem}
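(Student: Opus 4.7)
The plan is to verify the Lie-algebra modified Rota-Baxter identity
\[
[R(x),R(y)]_{\ast}=R\bigl([R(x),y]_{\ast}+[x,R(y)]_{\ast}\bigr)+\lambda[x,y]_{\ast}
\]
by a direct computation, starting from the pre-Lie identity
\[
R(x)\ast R(y)=R\bigl(R(x)\ast y+x\ast R(y)\bigr)+\lambda\,x\ast y.
\]
The argument is purely formal: no properties of $\ast$ beyond bilinearity are needed, since the commutator is antisymmetric and the hypothesis is symmetric under swapping $x$ and $y$.

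First I would expand $[R(x),R(y)]_{\ast}=R(x)\ast R(y)-R(y)\ast R(x)$. Applying the pre-Lie hypothesis to each of the two summands produces four $R$-terms and two weight-$\lambda$ terms. Next I would regroup: the four $R$-terms combine into
\[
R\bigl(R(x)\ast y+x\ast R(y)-R(y)\ast x-y\ast R(x)\bigr)
= R\bigl([R(x),y]_{\ast}+[x,R(y)]_{\ast}\bigr),
\]
while the weight-$\lambda$ terms give $\lambda(x\ast y-y\ast x)=\lambda[x,y]_{\ast}$. Summing, one recovers the required identity.

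There is no genuine obstacle; the only thing to check is that the pre-Lie modified Rota-Baxter equation is applied in a consistent order (i.e.\ that the two $R$-bracketed expressions really reassemble into the Lie commutators), and this is transparent once the six terms are laid out. The fact that the bracket $[\cdot,\cdot]_{\ast}$ coming from a pre-Lie product automatically satisfies the Jacobi identity (a standard fact about pre-Lie algebras) is not needed here, since the statement only claims that $R$ satisfies the modified Rota-Baxter identity on the already-existing Lie algebra $(\mathfrak{g},[\cdot,\cdot]_{\ast})$.
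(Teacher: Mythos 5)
Your computation is correct and is exactly the direct verification that the paper alludes to (the paper omits the proof, noting only that the lemma ``can be directly verified by definition''). Expanding $[R(x),R(y)]_{\ast}=R(x)\ast R(y)-R(y)\ast R(x)$, applying the pre-Lie modified Rota-Baxter identity to each summand, and regrouping the four $R$-terms into $R([R(x),y]_{\ast}+[x,R(y)]_{\ast})$ and the $\lambda$-terms into $\lambda[x,y]_{\ast}$ is precisely the intended argument, and your remark that the Jacobi identity for $[\cdot,\cdot]_{\ast}$ is not needed for this verification is accurate.
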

\begin{lem}Let $R$ be a modified Rota-Baxter operator of weight $\lambda$ on a commutative associative algebra $(\frakg, \cdot)$,  $D$ be a derivation with $D\circ R=R\circ D$. Then  $R$ is a modified Rota-Baxter operator of weight $\lambda$ on a pre-Lie algebra $(\frakg, \ast)$,  where
\begin{align*}
x\ast y=Dx\cdot y, \ \ \forall x, y \in \frakg.
\end{align*}
\end{lem}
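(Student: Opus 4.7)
The plan is to verify the modified Rota-Baxter identity of weight $\lambda$ on $(\frakg,\ast)$ by a single direct calculation, translating everything back to the commutative associative structure via the defining formula $x\ast y=D(x)\cdot y$ together with the commutation hypothesis $D\circ R=R\circ D$. Since the lemma only concerns the preservation of the operator identity (the fact that $(\frakg,\ast)$ is itself a pre-Lie algebra being a classical consequence of $D$ being a derivation on a commutative associative algebra), it suffices to chase the symbols.

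First I would rewrite the left-hand side of the desired identity as $R(x)\ast R(y)=D(R(x))\cdot R(y)=R(D(x))\cdot R(y)$, where the second equality uses $D\circ R=R\circ D$. Next I would apply the modified Rota-Baxter identity of weight $\lambda$ for $R$ on the commutative associative algebra $(\frakg,\cdot)$ to the pair $\bigl(D(x),y\bigr)$, which yields $R(D(x))\cdot R(y)=R\bigl(R(D(x))\cdot y+D(x)\cdot R(y)\bigr)+\lambda\,D(x)\cdot y$.

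Then I would reinterpret each term on the right-hand side in terms of $\ast$. Using $R(D(x))=D(R(x))$ once more, one has $R(D(x))\cdot y=D(R(x))\cdot y=R(x)\ast y$, while $D(x)\cdot R(y)=x\ast R(y)$ and $D(x)\cdot y=x\ast y$ follow directly from the definition of $\ast$. Substituting these three identifications gives exactly $R\bigl(R(x)\ast y+x\ast R(y)\bigr)+\lambda\,x\ast y$, which is the modified Rota-Baxter identity of weight $\lambda$ on $(\frakg,\ast)$.

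I would not anticipate any real obstacle here: the argument is an unfolding of the definitions combined with a single application of the hypothesis on $R$. The key mechanism is that the commutation $D\circ R=R\circ D$ allows one to freely exchange $D$ and $R$, so that the commutative associative identity applied to $(D(x),y)$ retranslates cleanly into the pre-Lie identity for $(x,y)$; the weight $\lambda$ is preserved automatically because the error term only involves the product $D(x)\cdot y=x\ast y$.
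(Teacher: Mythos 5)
Your proof is correct and is precisely the direct verification the paper has in mind (the paper omits the proof, stating it "can be directly verified by definition"). The key steps — using $D\circ R=R\circ D$ to rewrite $R(x)\ast R(y)=R(D(x))\cdot R(y)$, applying the modified Rota--Baxter identity on $(\mathfrak{g},\cdot)$ to the pair $(D(x),y)$, and translating back via $x\ast y=D(x)\cdot y$ — are exactly what is needed, and your observation that the weight term transports automatically is accurate.
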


According to Theorem \ref{4.2} and Lemma \ref{4.7}, we can know

\begin{theorem} \label{4.8} Let $R$ be a modified Rota-Baxter operator of weight $\lambda$ on a pre-Lie algebra $(\frakg, \ast)$, $f\in \mathfrak{g}^{\ast}$ satisfies $f(x\ast y-y\ast x) = 0$ for any  $x, y \in  \mathfrak{g}$. Define
\begin{align*}
[x, y, z]_f = f(x)(y\ast z-z\ast y)+ f(y)(z\ast x-x\ast z) + f(z)(x\ast y-y\ast x), \forall x, y, z \in \mathfrak{g},
\end{align*}
Then  $R$ is  a   modified Rota-Baxter operator of weight $\lambda$ on a $3$-Lie algebra $(\mathfrak{g}, [\cdot,\cdot,\cdot]_f)$ if and only if  $R$ satisfies
\begin{align*}
&R\Big(f(x)(Ry\ast Rz-Rz\ast Ry)+f(y)(Rz\ast Rx-Rx\ast Rz)+f(z)(Rx\ast Ry-Ry\ast Rx)+\lambda f(x)(y\ast z-z\ast y) \\
&\quad +\lambda f(y)(z\ast x-x\ast z)+ \lambda f(z)(x\ast y-y\ast x)\Big)-\lambda f(x)(Ry\ast z-z\ast Ry)\\
&\quad-\lambda f(x)(y\ast Rz-Rz\ast y) - \lambda f(y)(Rz\ast x-x\ast Rz)-\lambda  f(y)(z\ast Rx-Rx\ast z)\\
&\quad-\lambda f(z)(Rx\ast y-y\ast Rx)-\lambda f(z)(x\ast Ry-Ry\ast x)-2\lambda f(Ry)(z\ast x-x\ast z)\\
&\quad-2\lambda f(Rx)(y\ast z-z\ast y)-2\lambda f(Rz)(x\ast y-y\ast x)=0.
\end{align*}
\end{theorem}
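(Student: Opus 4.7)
The plan is to derive Theorem~\ref{4.8} as a direct consequence of Lemma~\ref{4.7}, Lemma~\ref{4.1}, and Theorem~\ref{4.2} via the reduction chain \emph{pre-Lie algebra} $\leadsto$ \emph{Lie algebra} $\leadsto$ \emph{$3$-Lie algebra}. Beyond a purely symbolic substitution at the last step, no new computation is required.

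First, I would invoke Lemma~\ref{4.7}: since $R$ is a modified Rota-Baxter operator of weight $\lambda$ on the pre-Lie algebra $(\mathfrak{g},\ast)$, it is automatically a modified Rota-Baxter operator of weight $\lambda$ on the commutator Lie algebra $(\mathfrak{g},[\cdot,\cdot]_\ast)$ with $[x,y]_\ast := x\ast y - y\ast x$. Next, I would observe that the hypothesis $f(x\ast y - y\ast x)=0$ is exactly the compatibility requirement $f([x,y]_\ast)=0$ of Lemma~\ref{4.1}. Applying that lemma to the Lie algebra $(\mathfrak{g},[\cdot,\cdot]_\ast)$ and the functional $f$ produces a $3$-Lie bracket
\[
[x,y,z]_{f,\ast} = f(x)[y,z]_\ast + f(y)[z,x]_\ast + f(z)[x,y]_\ast,
\]
and unfolding $[\cdot,\cdot]_\ast$ shows that $[\cdot,\cdot,\cdot]_{f,\ast}$ is literally the bracket $[\cdot,\cdot,\cdot]_f$ introduced in the statement.

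Having matched hypotheses, I would then apply Theorem~\ref{4.2} to the triple $\bigl(R,(\mathfrak{g},[\cdot,\cdot]_\ast),f\bigr)$. It asserts that $R$ is a modified Rota-Baxter operator of weight $\lambda$ on the induced $3$-Lie algebra $(\mathfrak{g},[\cdot,\cdot,\cdot]_f)$ if and only if a certain identity, written in terms of the Lie brackets $[\cdot,\cdot]_\ast$, is satisfied. Rewriting each of the brackets appearing there through $[a,b]_\ast = a\ast b - b\ast a$ converts that identity verbatim into the condition displayed in Theorem~\ref{4.8}.

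There is no genuine conceptual obstacle in this argument; the only point demanding attention is bookkeeping in the final substitution, where three families of commutators, $[Ra,Rb]_\ast$, $[Ra,b]_\ast$, and $[a,b]_\ast$, together with the contributions of the form $f(Ra)[b,c]_\ast$, must each be translated consistently into $\ast$-commutators so that the thirteen terms in the expanded version of the Theorem~\ref{4.2} identity align with those in the Theorem~\ref{4.8} statement.
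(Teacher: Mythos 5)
Your proposal is correct and is exactly the argument the paper intends: the text introduces Theorem~\ref{4.8} with ``According to Theorem~\ref{4.2} and Lemma~\ref{4.7}, we can know,'' i.e.\ it is obtained by passing from the pre-Lie algebra to the commutator Lie algebra via Lemma~\ref{4.7}, forming the induced $3$-Lie bracket via Lemma~\ref{4.1}, and then rewriting the criterion of Theorem~\ref{4.2} with $[a,b]_\ast=a\ast b-b\ast a$. Your substitution even silently corrects the typo in the displayed condition of Theorem~\ref{4.2} (where $-2\lambda f(Ry)[z,x]$ is duplicated and $-2\lambda f(Rx)[y,z]$ is missing), consistent with the corrected form appearing in Theorem~\ref{4.8}.
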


\begin{lem}(\cite{BGLW13})
Let $(\frakg, \cdot)$ be a commutative associative algebra and  $D$ be a derivation and $f\in \frakg^{\ast}$ satisfies $f(D(x)\cdot y)= f( x\cdot D(y))$  for all $x, y \in  \frakg$. Then $(\frakg, \{\cdot, \cdot, \cdot\}_{f, D})$ is a $3$-Lie algebra, where the bracket is given
\begin{eqnarray}
\{x, y, z\}_{f, D}=\left|
                                            \begin{array}{ccc}
                                              f(x) & f(y) & f(z)\\
                                             D(x) & D(y) & D(z) \\
                                              x & y & z \\
                                            \end{array}
                                          \right|,
\end{eqnarray}
for any $x, y, z\in \frakg$.
\end{lem}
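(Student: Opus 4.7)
The plan is to reduce the statement to Lemma~\ref{4.1} by identifying a natural Lie algebra structure on $\frakg$ induced by the derivation $D$. First I would define $[x,y]_D := D(x)\cdot y - D(y)\cdot x$ for $x,y\in\frakg$, and verify that $(\frakg,[\cdot,\cdot]_D)$ is a Lie algebra. The quickest route is to observe that $x\ast y := D(x)\cdot y$ endows $\frakg$ with a pre-Lie algebra structure (a classical consequence of $D$ being a derivation of the commutative associative product), so its commutator $[\cdot,\cdot]_\ast$, which is exactly $[\cdot,\cdot]_D$, is automatically a Lie bracket.

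Next I would expand the $3\times 3$ determinant in the definition of $\{\cdot,\cdot,\cdot\}_{f,D}$ along its first row to rewrite
\begin{align*}
\{x,y,z\}_{f,D} &= f(x)\bigl(D(y)z - D(z)y\bigr) - f(y)\bigl(D(x)z - D(z)x\bigr) + f(z)\bigl(D(x)y - D(y)x\bigr)\\
&= f(x)[y,z]_D + f(y)[z,x]_D + f(z)[x,y]_D.
\end{align*}
This expression is precisely the ternary bracket produced by Lemma~\ref{4.1} when applied to the Lie algebra $(\frakg,[\cdot,\cdot]_D)$ together with the functional $f$.

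It remains to verify the hypothesis of Lemma~\ref{4.1}, namely that $f([x,y]_D) = 0$ for all $x,y\in\frakg$. Using commutativity of the product $\cdot$ together with the assumption $f(D(x)\cdot y) = f(x\cdot D(y))$, we compute
\begin{align*}
f([x,y]_D) = f(D(x)\cdot y) - f(D(y)\cdot x) = f(D(x)\cdot y) - f(x\cdot D(y)) = 0.
\end{align*}
Applying Lemma~\ref{4.1} then immediately yields the desired $3$-Lie algebra structure, and the bracket so produced coincides with $\{\cdot,\cdot,\cdot\}_{f,D}$ by the determinant expansion above. The main obstacle is the verification that $(\frakg,[\cdot,\cdot]_D)$ is a Lie algebra; however, as noted this follows at once from the pre-Lie interpretation of $D(x)\cdot y$ and presents no real difficulty, while skew-symmetry of $\{\cdot,\cdot,\cdot\}_{f,D}$ is transparent from the determinantal form.
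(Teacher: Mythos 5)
Your proposal is correct, and it follows exactly the route the paper itself sets up (the lemma is only cited from \cite{BGLW13}, but your reduction via Lemma~\ref{4.1} is the same chain of constructions the paper uses in Section~4): the determinant expansion identifies $\{\cdot,\cdot,\cdot\}_{f,D}$ with the bracket $[\cdot,\cdot,\cdot]_f$ of Lemma~\ref{4.1} for the Lie bracket $[x,y]_D=D(x)\cdot y-D(y)\cdot x$, and the hypothesis $f(D(x)\cdot y)=f(x\cdot D(y))$ together with commutativity gives $f([x,y]_D)=0$. One small caution: $x\ast y=D(x)\cdot y$ is \emph{right} pre-Lie (right-symmetric) rather than left pre-Lie — the left-symmetric identity fails since $(x\ast y)\ast z-x\ast(y\ast z)=D^{2}(x)yz$ — but Lie-admissibility holds for either convention, so the commutator is indeed a Lie bracket; alternatively the Jacobi identity for $[\cdot,\cdot]_D$ is a two-line direct check using only that $D$ is a derivation and $\cdot$ is commutative associative.
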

\begin{theorem}
 Let $(\frakg, \cdot, R)$ be a commutative modified Rota-Baxter  algebra of weight $\lambda$ and  $D$ be a derivation with $D\circ R=R\circ D$ and $f\in \frakg^{\ast}$ satisfies $f(D(x)\cdot y)= f( x\cdot D(y))$.  Then $R$ is a modified Rota-Baxter operator weight $\lambda$ on the $3$-Lie algebra $(\frakg, \{\cdot, \cdot, \cdot\}_{f, D})$ if and only if $R$ satisfies
\begin{align*}
&R\Big(f(x)(DRy\cdot Rz-DRz\cdot Ry)+f(y)(DRz\cdot Rx-DRx\cdot Rz)+f(z)(DRx\cdot Ry-DRy\cdot Rx)\\
&\quad+\lambda f(x)(Dy\cdot z-Dz\cdot y)  +\lambda f(y)(Dz\cdot x-Dx\cdot z)+ \lambda f(z)(Dx\cdot y-Dy\cdot x)\Big)\\
&\quad-\lambda f(x)(DRy\cdot z-Dz\cdot Ry)-\lambda f(x)(Dy\cdot Rz-DRz\cdot y) - \lambda f(y)(DRz\cdot x-Dx\cdot Rz)\\
&\quad-\lambda  f(y)(Dz\cdot Rx-DRx\cdot z)-\lambda f(z)(DRx\cdot y-Dy\cdot Rx)-\lambda f(z)(Dx\cdot Ry-DRy\cdot x)\\
&\quad-2\lambda f(Ry)(Dz\cdot x-Dx\cdot z)-2\lambda f(Rx)(Dy\cdot z-Dz\cdot y)-2\lambda f(Rz)(Dx\cdot y-Dy\cdot x)=0.
\end{align*}
\end{theorem}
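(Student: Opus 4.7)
The plan is to reduce this theorem to Theorem~\ref{4.8} by funneling everything through the pre-Lie construction that is already in place. First I would invoke the lemma immediately preceding Theorem~\ref{4.8}: since $(\frakg,\cdot,R)$ is a commutative modified Rota-Baxter algebra of weight $\lambda$ and $D\circ R=R\circ D$, the operator $R$ is automatically a modified Rota-Baxter operator of weight $\lambda$ on the pre-Lie algebra $(\frakg,\ast)$ with $x\ast y=D(x)\cdot y$. Thus the pre-Lie setting of Theorem~\ref{4.8} is in force once I verify the compatibility of $f$.

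Next I would check that the hypothesis $f(D(x)\cdot y)=f(x\cdot D(y))$ is exactly the condition $f(x\ast y - y\ast x)=0$ required in Theorem~\ref{4.8}: indeed, using commutativity of $\cdot$ we have $f(y\ast x)=f(D(y)\cdot x)=f(x\cdot D(y))=f(D(x)\cdot y)=f(x\ast y)$. I would then identify the two $3$-Lie brackets. Expanding the determinant gives
\[
\{x,y,z\}_{f,D}=f(x)\bigl(D(y)\cdot z-D(z)\cdot y\bigr)+f(y)\bigl(D(z)\cdot x-D(x)\cdot z\bigr)+f(z)\bigl(D(x)\cdot y-D(y)\cdot x\bigr),
\]
which is exactly $f(x)(y\ast z-z\ast y)+f(y)(z\ast x-x\ast z)+f(z)(x\ast y-y\ast x)=[x,y,z]_f$. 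Hence $(\frakg,\{\cdot,\cdot,\cdot\}_{f,D})$ is the same $3$-Lie algebra as $(\frakg,[\cdot,\cdot,\cdot]_f)$ built from the pre-Lie product $\ast$.

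With both hypotheses of Theorem~\ref{4.8} matched, I would apply that theorem directly: $R$ is a modified Rota-Baxter operator of weight $\lambda$ on $(\frakg,\{\cdot,\cdot,\cdot\}_{f,D})$ if and only if the nine-term identity of Theorem~\ref{4.8} in the product $\ast$ holds. Finally I would translate that identity back into $\cdot$ and $D$ by substituting $a\ast b=D(a)\cdot b$ everywhere. The key small point is that every occurrence of the form $R(a)\ast b$ becomes $D(R(a))\cdot b=DR(a)\cdot b$, which is unambiguous because $D\circ R=R\circ D$; similarly $a\ast R(b)$ becomes $D(a)\cdot R(b)$. After this substitution, each bracket $R(a)\ast b-b\ast R(a)$ becomes $DR(a)\cdot b-D(b)\cdot R(a)$, and so on, reproducing exactly the nine-term expression in the statement of the theorem.

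The main obstacle, modest but the only one requiring care, is bookkeeping in this final substitution: Theorem~\ref{4.8} produces a string of $\ast$-commutators in the six variables $x,y,z,R(x),R(y),R(z)$, and one must match signs and the placement of $D$ consistently with the determinant expansion to recover the stated identity verbatim. No new algebraic input is needed beyond the two lemmas preceding Theorem~\ref{4.8}, Theorem~\ref{4.8} itself, and the identification of the two $3$-Lie brackets above, so the proof is essentially a direct application of already-established results.
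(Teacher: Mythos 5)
Your proposal is correct and follows essentially the same route as the paper, whose proof is precisely the one-line reduction "follows from the commutative-associative-to-pre-Lie lemma and Theorem~\ref{4.8}"; you have simply spelled out the verification that the determinant bracket $\{\cdot,\cdot,\cdot\}_{f,D}$ coincides with $[\cdot,\cdot,\cdot]_f$ for $x\ast y=D(x)\cdot y$, that the trace condition on $f$ translates correctly, and that the final substitution reproduces the stated identity.
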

\begin{proof}
The result follows directly from Lemma \ref{4.7} and Theorem \ref{4.8}.
\end{proof}

\bigskip

\section{Cohomology of modified Rota-Baxter operators of weight $\lambda$}\label{Cohomology of modified Rota-Baxter operators of weight}
\def\theequation{\arabic{section}.\arabic{equation}}
\setcounter{equation} {0}
In this section, we construct a representation of the $3$-Lie algebra $(\mathfrak{g}, [\cdot, \cdot, \cdot]_R)$ on the vector space
$\mathfrak{g}$, and define the cohomology of modified Rota-Baxter operators of weight $\lambda$ on $3$-Lie algebras.

\begin{lem}
Let $R: \mathfrak{g}\rightarrow \mathfrak{g} $ be a modified Rota-Baxter operator of weight $\lambda$ on a $3$-Lie algebra $(\mathfrak{g}, [\cdot, \cdot, \cdot])$. Define $\rho_R: \mathfrak{g}\wedge \mathfrak{g}\rightarrow \mathfrak{gl}(\mathfrak{g})$ by
\begin{align}
\rho_R(x, y)z=[Rx,Ry, z]-R([Rx, y, z]+[x, Ry, z])+\lambda[x, y, z], \ \ \ \forall x, y, z \in \mathfrak{g}. \label{5.1}
\end{align}
 Then $(\mathfrak{g},  \rho_R)$ is a representation of the $3$-Lie algebra $(\mathfrak{g}, [\cdot, \cdot, \cdot]_R)$.
\end{lem}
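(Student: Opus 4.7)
The plan is to verify directly the two representation axioms \eqref{Eq: 3-Lie representation1} and \eqref{Eq: 3-Lie representation2} for $\rho_R$, viewed as a map $\wedge^2\mathfrak{g}\to\mathfrak{gl}(\mathfrak{g})$ with the bracket on the domain being the twisted bracket $[\cdot,\cdot,\cdot]_R$ defined in \eqref{3.2}. First I would observe that $\rho_R$ is well defined on $\wedge^2\mathfrak{g}$: the skew-symmetry $\rho_R(y,x)z=-\rho_R(x,y)z$ is immediate from the skew-symmetry of $[\cdot,\cdot,\cdot]$ in its first two arguments together with the symmetric roles of $x,y$ in the correction terms $R([Rx,y,z]+[x,Ry,z])$ and $\lambda[x,y,z]$. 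Then I would record two consequences of the standing assumption $\mathfrak{g}^{1}\subseteq\mathcal{C}(\mathfrak{g})$ that prune most of the ensuing expansion: (i) every twisted bracket $[x_1,x_2,x_3]_R$ lies in $\mathfrak{g}^1$ and is therefore central; and (ii) $\rho_R(x,y)c=0$ for any $c\in\mathcal{C}(\mathfrak{g})$, since then each of the four summands defining $\rho_R(x,y)c$ is either a triple bracket with a central entry or $R$ applied to such.

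For axiom \eqref{Eq: 3-Lie representation1}, I would expand $[\rho_R(x_1,x_2),\rho_R(x_3,x_4)]z = \rho_R(x_1,x_2)\bigl(\rho_R(x_3,x_4)z\bigr) - \rho_R(x_3,x_4)\bigl(\rho_R(x_1,x_2)z\bigr)$. When the inner output $\rho_R(x_3,x_4)z$ is fed into the outer $\rho_R(x_1,x_2)$, the purely $\mathfrak{g}^1$-valued pieces $[Rx_3,Rx_4,z]$ and $\lambda[x_3,x_4,z]$ are annihilated by (ii), so only the contribution of $-R([Rx_3,x_4,z]+[x_3,Rx_4,z])$ survives. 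On the right-hand side, each summand $\rho_R([x_1,x_2,x_\bullet]_R,x_\star)$ has a central first slot by (i), so the $[x_\bullet, R x_\star, z]$-type and the $\lambda[\cdot,\cdot,z]$-type terms drop out, leaving expressions built from $[R[x_1,x_2,x_\bullet]_R, Rx_\star, z]$ and $R[R[x_1,x_2,x_\bullet]_R, x_\star, z]$. Matching the two sides is then achieved by invoking the modified Rota-Baxter identity to rewrite any $[Ra,Rb,Rc]$-type expressions that appear, together with the fundamental identity of the original 3-Lie algebra $(\mathfrak{g},[\cdot,\cdot,\cdot])$ to reorganize the remaining triple brackets.

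Axiom \eqref{Eq: 3-Lie representation2} is handled in the same spirit. On the left-hand side, the first slot of $\rho_R(x_1,[x_2,x_3,x_4]_R)$ is central by (i), so only certain terms survive; on the right-hand side, each composition $\rho_R(x_a,x_b)\rho_R(x_c,x_d)$ undergoes the same collapse as in the previous axiom — the inner $\mathfrak{g}^1$-valued pieces are killed by the outer $\rho_R$, leaving $R$-image-type contributions only. The equality of the two sides then reduces to an identity among $[Ra,Rb,c]$- and $R[Ra,b,c]$-type expressions that follows from the modified Rota-Baxter identity in combination with the fundamental identity of $(\mathfrak{g},[\cdot,\cdot,\cdot])$.

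The main obstacle is the combinatorial bookkeeping: even after the drastic simplifications provided by $\mathfrak{g}^1\subseteq\mathcal{C}(\mathfrak{g})$, each axiom still yields on the order of a dozen surviving terms per side, indexed by which of the three entries carries $R$ and whether $R$ is applied on the outside. An alternative, more conceptual route I would consider in order to streamline the argument is to write $\rho_R(x,y)=\mathrm{ad}^{R}_{x,y}-\Phi_R(x,y)$, where $\mathrm{ad}^R$ denotes the adjoint representation of $(\mathfrak{g},[\cdot,\cdot,\cdot]_R)$ (which automatically satisfies both representation axioms), and $\Phi_R(x,y):\mathfrak{g}\to\mathfrak{g}$ is the explicit discrepancy; then only the cancellations involving $\Phi_R$ need to be checked, and these follow from the modified Rota-Baxter identity modulo the central ideal $\mathfrak{g}^{1}$.
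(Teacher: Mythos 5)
Your proposal is correct and follows the same route as the paper: a direct verification of the two representation axioms \eqref{Eq: 3-Lie representation1} and \eqref{Eq: 3-Lie representation2} for $\rho_R$ with respect to the bracket $[\cdot,\cdot,\cdot]_R$, using the modified Rota-Baxter identity and the fundamental identity of $(\mathfrak{g},[\cdot,\cdot,\cdot])$ to organize the cancellation. The one genuine difference is organizational, and it is an improvement: the paper expands everything at once and asserts the total vanishes, whereas you isolate up front the two consequences of the standing hypothesis $\mathfrak{g}^1\subseteq\mathcal{C}(\mathfrak{g})$, namely that every $R$-twisted bracket is central and that $\rho_R(x,y)$ annihilates central elements. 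These observations do more work than you claim: one further application of the modified Rota-Baxter identity to $[Rx_1,Rx_2,Rw]$ with $w$ central shows that $\rho_R(x_1,x_2)(Rw)=0$ as well, so in fact $\rho_R(x_1,x_2)\circ\rho_R(x_3,x_4)=0$ identically and the commutator side of \eqref{Eq: 3-Lie representation1} (and likewise the right-hand side of \eqref{Eq: 3-Lie representation2}) vanishes outright; the whole verification then collapses to the terms $\rho_R([x_1,x_2,x_\bullet]_R,x_\star)$, which are handled exactly as you say, via the identity $R\big([x_1,x_2,x_3]_R\big)=[Rx_1,Rx_2,Rx_3]+\lambda\big([Rx_1,x_2,x_3]+[x_1,Rx_2,x_3]+[x_1,x_2,Rx_3]\big)$ followed by the fundamental identity with a central innermost bracket. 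This makes the role of the assumption $\mathfrak{g}^1\subseteq\mathcal{C}(\mathfrak{g})$ explicit, which the paper's displayed computation leaves implicit. Your proposal is a plan rather than a completed calculation, but the plan is sound and every step you defer is routine.
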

\begin{proof}
For any $x_1, x_2, x_3, x_4, x\in \mathfrak{g}$, by (\ref{3.2}) and (\ref{5.1}),   we have
\begin{small}
\begin{align*}
&\big(\rho_R(x_1, x_2)\circ\rho_R(x_3, x_4)-\rho_R(x_3, x_4)\circ \rho_R(x_1, x_2)-\rho_R([x_1, x_2, x_3]_R, x_4)\\
\quad\quad &-\rho_R(x_3, [x_1, x_2, x_4]_R)\big)(x)\\
&=\rho_R(x_1, x_2)\big([Rx_3,Rx_4, x]-R[Rx_3, x_4, x]-R[x_3, Rx_4, x]+\lambda[x_3, x_4, x]\big)\\
\quad\quad &-\rho_R(x_3, x_4)\big([Rx_1,Rx_2, x]-R[Rx_1, x_2, x]-R[x_1, Rx_2, x]+\lambda[x_1, x_2, x]\big)\\
\quad\quad & -[R[x_1,x_2,x_3]_R, Rx_4, x]+R( [R[x_1,x_2,x_3]_R, x_4, x]+[[x_1, x_2, x_3]_R, Rx_4, x]) \\
\quad\quad &-\lambda [[x_1,x_2,x_3]_R, x_4, x] +[R[x_1,x_2,x_4]_R, Rx_3, x]-R( [R[x_1,x_2,x_4]_R, x_3, x]\\
\quad\quad &+[[x_1, x_2, x_4]_R, Rx_3, x])+\lambda [[x_1,x_2,x_4]_R, x_3, x]\\
&=[Rx_1,Rx_2, [Rx_3,Rx_4, x]]-R([Rx_1, x_2, [Rx_3,Rx_4, x]]+[x_1, Rx_2, [Rx_3,Rx_4, x]])\\
\quad\quad&+[x_1, x_2, [Rx_3,Rx_4, x]]-[Rx_1,Rx_2, R[Rx_3, x_4, x]]+R([Rx_1, x_2, R[Rx_3, x_4, x]]\\
\quad\quad&+[x_1, Rx_2, R[Rx_3, x_4, x]])-[x_1, x_2, R[Rx_3, x_4, x]]-[Rx_1,Rx_2, R[x_3, Rx_4, x]]\\
\quad\quad&+R([Rx_1, x_2, R[x_3, Rx_4, x]]+[x_1, Rx_2, R[x_3, Rx_4, x]])-[x_1, x_2, R[x_3, Rx_4, x]]\\
\quad\quad&+\lambda[Rx_1,Rx_2, [x_3, x_4, x]]-\lambda R[Rx_1, x_2, [x_3, x_4, x]]-\lambda R[x_1, Rx_2, [x_3, x_4, x]]\\
\quad\quad&+\lambda^2[x_1, x_2, [x_3, x_4, x]]_\mathfrak{g}-[Rx_3,Rx_4, [Rx_1,Rx_2, x]]+R([Rx_3, x_4, [Rx_1,Rx_2, x]]\\
\quad\quad&-[x_3, Rx_4, [Rx_1,Rx_2, x]])-[x_3, x_4, [Rx_1,Rx_2, x]]+[Rx_3,Rx_4, R[Rx_1, x_2, x]]\\
\quad\quad&-R([Rx_3, x_4, R[Rx_1, x_2, x]]-[x_3, Rx_4, R[Rx_1, x_2, x]])+[x_3, x_4, R[Rx_1, x_2, x]]\\
\quad\quad&+[Rx_3,Rx_4, R[x_1, Rx_2, x]]-R([Rx_3, x_4, R[x_1, Rx_2, x]]-[x_3, Rx_4, R[x_1, Rx_2, x]])\\
\quad\quad&+[x_3, x_4, R[x_1, Rx_2, x]]-\lambda[Rx_3,Rx_4, [x_1, x_2, x]]+\lambda R[Rx_3, x_4, [x_1, x_2, x]]\\
\quad\quad&+\lambda R[x_3, Rx_4, [x_1, x_2, x]]-\lambda^2[x_3, x_4, [x_1, x_2, x]] -[[Rx_1,Rx_2,Rx_3], Rx_4, x]\\
\quad\quad &-[[Rx_1,x_2,x_3], Rx_4, x]-[[x_1,Rx_2,x_3], Rx_4, x]-[[x_1,x_2,Rx_3], Rx_4, x] \\
\quad\quad &+R[[Rx_1,x_2,x_3], x_4, x]+R[[Rx_1,Rx_2,Rx_3], x_4, x]+R[[x_1,Rx_2,x_3], x_4, x]\\
\quad\quad &+R[[x_1,x_2,Rx_3], x_4, x]+R[[Rx_1, Rx_2, x_3], Rx_4, x] +R[[x_1, Rx_2, Rx_3], Rx_4, x]\\
\quad\quad &+R[[Rx_1, x_2, Rx_3], Rx_4, x]+R[[x_1, x_2, x_3], Rx_4, x]-\lambda [[Rx_1, Rx_2, x_3], x_4, x]\\
\quad\quad &- \lambda[[x_1, Rx_2, Rx_3], x_4, x] - \lambda[[Rx_1, x_2, Rx_3], x_4, x]-\lambda^2 [[x_1, x_2, x_3], x_4, x]\\
\quad\quad &+[[Rx_1,Rx_2,Rx_4], Rx_3, x]+[[Rx_1,x_2,x_4], Rx_3, x]+[[x_1,Rx_2,x_4], Rx_3, x]\\
\quad\quad &+[[x_1,x_2,Rx_4], Rx_3, x] -R[[Rx_1,Rx_2,Rx_4], x_3, x]-R[[Rx_1,x_2,x_4], x_3, x]\\
\quad\quad &-R[[x_1,Rx_2,x_4], x_3, x]-R[[x_1,x_2,Rx_4], x_3, x]-R[[Rx_1, Rx_2, x_4], Rx_3, x] \\
\quad\quad &-R[[x_1, Rx_2, Rx_4], Rx_3, x]-R[[Rx_1, x_2, Rx_4], Rx_3, x]-R[[x_1, x_2, x_4], Rx_3, x]\\
\quad\quad &+\lambda[[Rx_1, Rx_2, x_4], x_3, x]+ \lambda[[x_1, Rx_2, Rx_4], x_3, x] + \lambda[[Rx_1, x_2, Rx_4], x_3, x]\\
\quad\quad &+\lambda^2 [[x_1, x_2, x_4], x_3, x]\\
&=0.
\end{align*}
\end{small}
Similarly, we have
\begin{align*}
\rho_R(x_1, [x_2, x_3, x_4]_R)&=\rho_R(x_3, x_4)\circ \rho_R(x_1, x_2)-\rho_R(x_2, x_4)\circ \rho_R(x_1, x_3)+\rho_R(x_2, x_3)\circ \rho_R(x_1, x_4).
\end{align*}
And the proof is finished.
\end{proof}

Let $\partial_R: C^n_{\mathrm{3Lie}}(\mathfrak{g}) \rightarrow C^{n+1}_{\mathrm{3Lie}}(\mathfrak{g})$ be the corresponding coboundary operator of the $3$-Lie algebra $(\mathfrak{g}, [\cdot, \cdot, \cdot]_R)$ with coefficients in the representation $(\mathfrak{g}, [\cdot, \cdot, \cdot])$. More precisely, $\partial_R: C^n_{\mathrm{3Lie}}(\mathfrak{g}) \rightarrow C^{n+1}_{\mathrm{3Lie}}(\mathfrak{g})$ is given by
\begin{small}
\begin{align*}
&(\partial_R f)(\mathfrak{X}_1, \dots,  \mathfrak{X}_n, x_{n+1})\\
&=\sum_{1\leq j< k\leq n}(-1)^{j}f(\mathfrak{X}_1, \dots, \mathfrak{\hat{X}}_j, \dots, \mathfrak{X}_{k-1}, [x_j, y_j, x_k]_R\wedge y_k
 + x_k\wedge [x_j, y_j, y_k]_R, \mathfrak{X}_{k+1}, \dots, \mathfrak{X}_{n}, x_{n+1})\\
&\quad+ \sum^{n}_{j=1}(-1)^{j-1}\rho_R(x_j, y_j)f(\mathfrak{X}_1, \dots, \mathfrak{\hat{X}}_j, \dots, \mathfrak{X}_{n}, x_{n+1})\\
&\quad+ (-1)^{n+1}(\rho_R(y_n, x_{n+1})f(\mathfrak{X}_1,  \dots, \mathfrak{X}_{n-1}, x_{n})+\rho_R(x_{n+1}, x_n)f(\mathfrak{X}_1,  \dots, \mathfrak{X}_{n-1}, y_{n})),
\end{align*}
\end{small}
for $\mathfrak{X}_i=x_i\wedge y_i\in \wedge^2 \mathfrak{g}, i=1,\cdots, n$ and $x_{n+1}\in \mathfrak{g}$.

Obviously $f\in C^1_{\mathrm{3Lie}}(\mathfrak{g})$ is closed if and only if
\begin{align*}
& [ f(x), Ry, Rz]+[Rx,  f(y), Rz]+[Rx, Ry,  f(z)]\\
&=R( [f(x), Ry, z]+[Rx, f(y), z])+[x, f(y), Rz]+[x, Ry, f(z)]+[Rx, y, f(z)]+[f(x), y, Rz])\\
\quad&\quad+f( [Rx, Ry, z]+[x, Ry, Rz]+[Rx, y, Rz]+\lambda [x, y, z])- \lambda[f(x), y, z]-\lambda[x, f(y), z]-\lambda[x, y, f(z)].
\end{align*}

For any $\mathfrak{X}=x\wedge y\in \wedge^2 \mathfrak{g}$, we define $d_R(\mathfrak{X}) : \mathfrak{g}\rightarrow \mathfrak{g}$ by
\begin{eqnarray*}
d_R(\mathfrak{X})x=R[\mathfrak{X}, x]-[\mathfrak{X}, Rx], \quad \forall \mathfrak{X}\in \wedge^2\mathfrak{g}, x\in \mathfrak{g},
\end{eqnarray*}
\begin{prop}
Let $R$ be a modified Rota-Baxter operator of weight $\lambda$ on a $3$-Lie algebra $(\mathfrak{g}, [\cdot, \cdot, \cdot])$. Then $d_R(\mathfrak{X})$ is a 1-cocycle on the $3$-Lie algebra $(\mathfrak{g}, [\cdot, \cdot, \cdot]_R)$ with coefficients in $(\mathfrak{g}, \rho_R)$.
\end{prop}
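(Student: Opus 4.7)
My plan is to exhibit $d_R(\mathfrak{X})$ as the infinitesimal part of a one-parameter family of modified Rota--Baxter operators obtained from $R$ by conjugation with $3$-Lie algebra automorphisms, and then read off the $1$-cocycle identity by differentiating the MRB equation at the identity parameter.

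Writing $\mathfrak{X} = x_0 \wedge y_0$, I set $A := \mathrm{ad}_{x_0, y_0}$, so $A(z) = [x_0, y_0, z]$. The fundamental identity exhibits $A$ as a derivation of the $3$-Lie bracket. Under the standing assumption $\mathfrak{g}^1 \subset \mathcal{C}(\mathfrak{g})$, we have $\mathrm{im}(A) \subset \mathfrak{g}^1 \subset \mathcal{C}(\mathfrak{g})$, so $A^2 = 0$ and any triple bracket containing an element of $\mathrm{im}(A)$ vanishes. A direct check then shows that $\psi_\epsilon := \mathrm{id} + \epsilon A$ preserves the $3$-Lie bracket for every $\epsilon \in \bk$ and has exact inverse $\psi_\epsilon^{-1} = \mathrm{id} - \epsilon A$, so $\psi_\epsilon \in \mathrm{Aut}(\mathfrak{g})$.

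By the conjugation example in Section~\ref{Modified Rota-Baxter operators of weight}, $R_\epsilon := \psi_\epsilon^{-1} \circ R \circ \psi_\epsilon$ is a modified Rota--Baxter operator of weight $\lambda$ for every $\epsilon \in \bk$. Expanding gives
\[
R_\epsilon = R + \epsilon(RA - AR) - \epsilon^{2}\, A R A,
\]
so $\frac{d}{d\epsilon}\big|_{\epsilon=0} R_\epsilon = RA - AR = d_R(\mathfrak{X})$ by definition. Since the MRB identity holds for $R_\epsilon$ at all $\epsilon$, it holds in particular to first order; extracting the coefficient of $\epsilon$ from the MRB identity for $R_\epsilon$ then yields exactly the displayed closed-cochain condition with $f = d_R(\mathfrak{X})$, proving the claim.

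The nontrivial step, and the main obstacle, is matching the linearization of the MRB identity at $R$ in the direction $f$ with the displayed closed condition term-by-term: the six summands inside $R(\cdots)$ must arise from linearizing the three inner brackets $[R_\epsilon \cdot, R_\epsilon \cdot, \cdot\,]$ sitting inside the outer $R_\epsilon$, the four summands inside $f([\cdots]_R)$ come from differentiating the outer $R_\epsilon$, and the three $-\lambda$-weighted terms come from the $-\lambda$-block on the right-hand side of the MRB equation. Once this matching is in place the conclusion is immediate; the pedestrian alternative would be to substitute $d_R(\mathfrak{X})(z) = R[x_0, y_0, z] - [x_0, y_0, Rz]$ directly into the closed condition and to verify the identity by expansion, exploiting the centrality of $\mathfrak{g}^1$ (so that $[u,v,w]_R$ is central, $[x_0, y_0, [u,v,w]_R] = 0$, and many nested bracket terms collapse) together with the MRB equation for $R$ applied to triples involving elements of $\mathfrak{g}^1$.
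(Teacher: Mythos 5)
Your argument is correct, and it takes a genuinely different route from the paper, whose proof of this proposition is only the phrase ``by direct calculation.'' Instead of expanding the cocycle identity term by term, you realize $d_R(\mathfrak{X})=RA-AR$ (with $A=\mathrm{ad}_{\mathfrak{X}}$) as the first-order term of the exact one-parameter family $R_\epsilon=\psi_\epsilon^{-1}R\psi_\epsilon$ of modified Rota--Baxter operators, where $\psi_\epsilon=\mathrm{id}+\epsilon A$. All the ingredients check out: the standing hypothesis $\mathfrak{g}^1\subset\mathcal{C}(\mathfrak{g})$, which is in force throughout the cohomology section, gives $A^2=0$, kills every bracket with an entry in $\mathrm{im}(A)$, and makes $\psi_\epsilon$ a genuine automorphism with inverse $\mathrm{id}-\epsilon A$; the conjugation-invariance of modified Rota--Baxter operators is the paper's own example; and since $\mathbf{k}$ has characteristic zero, the vanishing of the polynomial identity in $\epsilon$ forces each coefficient to vanish, the quadratic correction $-\epsilon^2 ARA$ never touching the linear coefficient. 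The ``matching'' step you flag as the main obstacle is in fact already done for you by the paper: the $\epsilon$-linearization of the modified Rota--Baxter identity in a direction $f$ is literally equation \eqref{6.1}, which the paper states is equivalent to $f$ being a $1$-cocycle, and which coincides with the displayed ``closed'' condition in the cohomology section. What your approach buys is a short, structural proof that also explains \emph{why} $d_R(\mathfrak{X})$ is a cocycle (it is the infinitesimal of a trivial symmetry), at the cost of invoking the centrality assumption and the automorphism example; the paper's direct substitution would instead have to exploit the same centrality to collapse the many nested brackets, as you note in your ``pedestrian alternative.''
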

\begin{proof}
By direct calculation we can get the conclusion.
\end{proof}
\begin{theorem}\label{Thm: cohomology of modified RB 3 Lie}
Let $R$ be a modified Rota-Baxter operator of weight $\lambda$ on a $3$-Lie algebra $(\mathfrak{g}, [\cdot, \cdot, \cdot]_R)$ with respect to a representation $(\mathfrak{g},  \rho_R)$. Define the set of $n$-cochains by
\begin{align*}
C^n_{\mathrm{R}}(\mathfrak{g} )=
\begin{cases}
C^n_{\mathrm{3Lie}}(\mathfrak{g}), ~~n\geq 2,\\
\mathfrak{g}\wedge \mathfrak{g}, ~~~~~~~~~~ n=1.
\end{cases}
\end{align*}
Define $D_R: C^n_{\mathrm{R}}(\mathfrak{g})\rightarrow C^{n+1}_{\mathrm{R}}(\mathfrak{g} )$ by
\begin{align*}
D_R=
\begin{cases}
\partial_R, ~~n\geq 2,\\
d_R, ~~~~ n=1.
\end{cases}
\end{align*}
\end{theorem}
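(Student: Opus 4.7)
The plan is to prove that $(C^*_R(\mathfrak{g}), D_R)$ is a cochain complex, i.e., that $D_R\circ D_R=0$, by splitting into two regimes according to the piecewise definition of $D_R$. The two cases to handle are the identity $\partial_R\circ\partial_R=0$ in degrees $n\ge 2$, and the boundary case $\partial_R\circ d_R=0$ between $C^1_R(\mathfrak{g})$ and $C^2_R(\mathfrak{g})=C^2_{\mathrm{3Lie}}(\mathfrak{g})$.

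For degrees $n\ge 2$, the plan is to invoke the general $3$-Lie algebra cohomology machinery recalled in Section~\ref{Preliminaries}. Indeed, by the preceding lemma, $(\mathfrak{g},\rho_R)$ is a bona fide representation of the $3$-Lie algebra $(\mathfrak{g},[\cdot,\cdot,\cdot]_R)$, and the operator $\partial_R$ is exactly the Chevalley-Eilenberg-type coboundary $\partial^n_{\mathrm{3Lie}}$ for this pair. Hence $\partial_R\circ\partial_R=0$ follows from the standard fact that the cohomology differential of any representation of a $3$-Lie algebra squares to zero; no new computation is required beyond verifying that the formula for $\partial_R$ matches the one recalled in Section~\ref{Preliminaries} under the substitution $[\cdot,\cdot,\cdot]\rightsquigarrow[\cdot,\cdot,\cdot]_R$, $\rho\rightsquigarrow\rho_R$.

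For the boundary case $n=1$, the plan is to read off $\partial_R\circ d_R=0$ directly from the proposition immediately preceding Theorem~\ref{Thm: cohomology of modified RB 3 Lie}. That proposition asserts that for every $\mathfrak{X}\in\mathfrak{g}\wedge\mathfrak{g}$, the linear map $d_R(\mathfrak{X})\in C^1_{\mathrm{3Lie}}(\mathfrak{g})$ is a $1$-cocycle on $(\mathfrak{g},[\cdot,\cdot,\cdot]_R)$ with coefficients in $(\mathfrak{g},\rho_R)$; by the explicit closedness condition written out just above the definition of $d_R$, this is exactly $\partial_R(d_R(\mathfrak{X}))=0$. Combining this with the previous step establishes $D_R^2=0$ on the entire complex.

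The step I expect to be the main obstacle is not the formal diagram-chase just described but the bookkeeping needed to ensure that the two pieces glue correctly: the target of $d_R$ is $C^2_R(\mathfrak{g})=C^2_{\mathrm{3Lie}}(\mathfrak{g})=\Hom(\mathfrak{g}\wedge\mathfrak{g}\wedge\mathfrak{g},\mathfrak{g})$, and one must check that the formula for $\partial_R$ restricted to this degree coincides with the operator implicit in the statement that $d_R(\mathfrak{X})$ is a $1$-cocycle. This amounts to a careful matching of indexing conventions and of the role of the standing hypothesis $\mathfrak{g}^1\subset\mathcal{C}(\mathfrak{g})$ used earlier to guarantee that $(\mathfrak{g},[\cdot,\cdot,\cdot]_R)$ really is a $3$-Lie algebra; once this is in place, the two regimes patch together and the cochain complex structure, together with the well-definedness of $H^n_R(\mathfrak{g}):=H^n(C^*_R(\mathfrak{g}),D_R)$, follows.
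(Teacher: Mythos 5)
Your proposal is correct and follows exactly the route the paper intends: the paper gives no written proof of this theorem, and the two ingredients you assemble --- the Lemma that $(\mathfrak{g},\rho_R)$ is a representation of $(\mathfrak{g},[\cdot,\cdot,\cdot]_R)$ (so $\partial_R\circ\partial_R=0$ comes for free from the general $3$-Lie coboundary, using the standing hypothesis $\mathfrak{g}^1\subset\mathcal{C}(\mathfrak{g})$ to make $[\cdot,\cdot,\cdot]_R$ a $3$-Lie bracket) and the Proposition that $d_R(\mathfrak{X})$ is a $1$-cocycle (so $\partial_R\circ d_R=0$) --- are precisely the two results placed immediately before the theorem for this purpose. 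One caveat on the bookkeeping point you flag but do not resolve: as literally written, $C^2_R(\mathfrak{g})=C^2_{\mathrm{3Lie}}(\mathfrak{g})=\Hom(\wedge^2\mathfrak{g}\wedge\mathfrak{g},\mathfrak{g})$ cannot be the target of $d_R$, since $d_R(\mathfrak{X})\in\Hom(\mathfrak{g},\mathfrak{g})=C^1_{\mathrm{3Lie}}(\mathfrak{g})$; the composite $\partial_R\circ d_R$ only typechecks if one reads $C^n_R(\mathfrak{g})=C^{n-1}_{\mathrm{3Lie}}(\mathfrak{g})$ for $n\geq 2$, a degree shift the paper itself uses implicitly later when it places the $1$-cocycle $\hat R\in\Hom(\mathfrak{g},\mathfrak{g})$ in $H^2_R(\mathfrak{g})$. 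With that correction your two regimes glue and $D_R^2=0$ follows as you describe.
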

We denote by
\begin{align*}
    Z^n_R (\mathfrak{g}) = \{ f \in C^n_R (\mathfrak{g}) ~|~ D_R f = 0 \} ~~~ \text{ and } ~~~
    B^n_R (\mathfrak{g}) = \{ D_R g ~|~ g \in C^{n-1}_R(\mathfrak{g}) \}
\end{align*}
the space of $n$-cocycles and $n$-coboundaries, respectively.
The corresponding quotients
\begin{align*}
    H^n_R (\mathfrak{g}) :=   Z^n_R (\mathfrak{g})  /B^n_R (\mathfrak{g}), \text{ for } n \geq 1
\end{align*}
are called the $n$-th cohomology group,  and the cohomology of the cochain complex
$(\oplus_{n=1}^{\infty}C^n_R(\mathfrak{g}), D_R)$ is taken to be the cohomology for the modified Rota-Baxter operator $R$.

\section{Formal deformations  of modified Rota-Baxter operators of weight $\lambda$}\label{Formal deformations  of modified Rota-Baxter operators of weight }
\def\theequation{\arabic{section}.\arabic{equation}}
\setcounter{equation} {0}
In this section, we study  formal  deformations of modified Rota-Baxter operators of weight $\lambda$ on $3$-Lie
algebras.

\begin{defn}
 Let $R$ be a modified Rota-Baxter operator on the $3$-Lie algebra $(\mathfrak{g}, [\cdot, \cdot, \cdot])$ and  $\hat{R}: \mathfrak{g}\rightarrow \mathfrak{g}$ be a
 linear map. If there exists a positive number $\epsilon\in \mathbf{k}$ such that $R_t = R+t \hat{R}$ is still a  modified Rota-Baxter operator
 on the $3$-Lie algebra $(\mathfrak{g}, [\cdot, \cdot, \cdot])$ for all $t\in (-\epsilon, \epsilon)$, we say that $\hat{R}$ generates a linear deformation of
 the  modified Rota-Baxter operator $R$.
\end{defn}

\begin{defn}
Let $R$ be a modified Rota-Baxter operator on the $3$-Lie algebra $(\mathfrak{g}, [\cdot, \cdot, \cdot])$. Two linear deformations $R^1_t = R+t\hat{R}_1$ and $R^2_t = R+t\hat{R}_2$ are said to be equivalent if there exists an $\mathfrak{X}\in \mathfrak{g}\wedge \mathfrak{g}$ such that
 \begin{align*}
   \varphi_t=\mathrm{id} + t\mathrm{ad}_{\mathfrak{X}},
\end{align*}
 satisfies the following conditions:
 \begin{align*}
&(a) \quad \varphi_t([x, y, z])=[ \varphi_t(x), \varphi_t(y),  \varphi_t(z)], \quad \forall x, y, z\in \mathfrak{g},\\
&(b) \quad R_t^2\circ  \varphi_t = \varphi_t \circ R^1_t,
\end{align*}
\end{defn}
\begin{theorem}
 Let $\hat{R}:\mathfrak{g}\rightarrow \mathfrak{g}$ generate a linear deformation of the modified Rota-Baxter operator $R$. Then $\hat{R}$ is a 1-cocycle.\\
 Let $R^1_t$ and $R^2_t$ be equivalent linear deformations of $R$ generated by $\hat{R}_1$ and $\hat{R}_2$ respectively. Then $\hat{R}_1$ and $\hat{R}_2$ belong to the same cohomology class $H^2_R (\mathfrak{g})$.
\end{theorem}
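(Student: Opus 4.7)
The plan is to extract both parts by linearising the defining identities at $t=0$ and reading off the coefficient of $t^1$.

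For the first assertion, I substitute $R_t=R+t\hat{R}$ into the modified Rota-Baxter identity of weight $\lambda$ applied to arbitrary $x,y,z\in\mathfrak{g}$, and equate the coefficients of successive powers of $t$ on the two sides. The $t^0$-piece just reproduces the identity already satisfied by $R$, and so contributes nothing. The $t^1$-piece contains three terms with a single occurrence of $\hat R$ on the left-hand side and seven terms on the right-hand side (four coming from differentiating inside the outer $R$, together with the three weight-$\lambda$ correction terms). After regrouping in terms of $R$, $[\cdot,\cdot,\cdot]_R$ and $\rho_R$, this is precisely the closedness equation for a $1$-cochain $f=\hat{R}\in C^{1}_{\mathrm{3Lie}}(\mathfrak{g},\mathfrak{g})$ that was displayed just after the definition of $\partial_R$ in Section~\ref{Cohomology of modified Rota-Baxter operators of weight}. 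Hence $\partial_R\hat{R}=0$, which is the $1$-cocycle condition.

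For the equivalence assertion, I substitute $\varphi_t=\mrm{id}+t\,\mrm{ad}_{\mathfrak{X}}$ into condition (b), namely $R^2_t\circ\varphi_t=\varphi_t\circ R^1_t$, and expand in $t$:
\[
R^2_t\bigl(\varphi_t(x)\bigr)=R(x)+t\bigl(\hat{R}_2(x)+R[\mathfrak{X},x]\bigr)+O(t^2),
\]
\[
\varphi_t\bigl(R^1_t(x)\bigr)=R(x)+t\bigl(\hat{R}_1(x)+[\mathfrak{X},Rx]\bigr)+O(t^2).
\]
Comparing the $t^1$-coefficients gives
\[
\hat{R}_1(x)-\hat{R}_2(x)=R[\mathfrak{X},x]-[\mathfrak{X},Rx]=d_R(\mathfrak{X})(x),
\]
so that $\hat{R}_1-\hat{R}_2=D_R(\mathfrak{X})$ is a coboundary; equivalently, $[\hat{R}_1]=[\hat{R}_2]$ in the cohomology of the complex $(C^{\ast}_R(\mathfrak{g}),D_R)$. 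Condition (a) of equivalence, expanded to order $t$, becomes the statement that $\mrm{ad}_{\mathfrak{X}}$ is a derivation of the ternary bracket, which is the fundamental identity and imposes no extra constraint on $\mathfrak{X}$.

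The only real obstacle is book-keeping: the modified Rota-Baxter identity has many summands, so one has to match the seven $t^1$-terms produced by the linearisation against the explicit closedness formula term-by-term, using the definitions of $\rho_R$, $d_R$ and $[\cdot,\cdot,\cdot]_R$ to repackage them. Once this dictionary is fixed, both parts of the theorem are a direct consequence of extracting the first-order Taylor coefficient; no further conceptual input is needed.
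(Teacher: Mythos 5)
Your proposal is correct and follows essentially the same route as the paper: extract the $t^{1}$-coefficient of the modified Rota--Baxter identity for $R_{t}=R+t\hat R$ and identify it with the displayed closedness condition for a $1$-cochain, then extract the $t^{1}$-coefficient of $\varphi_{t}\circ R^{1}_{t}=R^{2}_{t}\circ\varphi_{t}$ to get $\hat R_{1}-\hat R_{2}=d_{R}(\mathfrak X)$. (Your count of ``seven terms'' on the right-hand side of the linearised identity is slightly off --- the derivative of $R_{t}(\cdots)$ produces one $\hat R(\cdots)$ block plus six brackets inside $R$, in addition to the three weight terms --- but this is only bookkeeping and does not affect the argument.)
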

\begin{proof}
Since $R_t = R+t\hat{R}$ are modified Rota-Baxter operators,  for any  $x, y, z \in \mathfrak{g}$, we have
\begin{align*}
& [R_tx, R_ty, R_tz]\\
 &=R_t( [R_tx, R_ty, z]+[x, R_ty, R_tz]_\mathfrak{g}+[R_tx, y, R_tz]+\lambda [x, y, z])\\
\quad &\quad-\lambda [R_tx, y, z]-\lambda[x, R_ty, z]-\lambda[x, y, R_tz],
\end{align*}
which implies
\begin{align}
& [ \hat{R}x, Ry, Rz]+[Rx,  \hat{R}y, Rz]+[Rx, Ry,  \hat{R}z]\label{6.1}\\
&=R( [\hat{R}x, Ry, z]+[Rx, \hat{R}y, z])+[x, \hat{R}y, Rz]\nonumber\\
\quad&\quad+[x, Ry, \hat{R}z]+[Rx, y, \hat{R}z]+[\hat{R}x, y, Rz])\nonumber\\
\quad&\quad+\hat{R}( [Rx, Ry, z]+[x, Ry, Rz]+[Rx, y, Rz]_\mathfrak{g}+\lambda[x, y, z])\nonumber\\
\quad &\quad- \lambda[\hat{R}x, y, z]-\lambda[x, \hat{R}y, z]-\lambda[x, y, \hat{R}z],\nonumber
\end{align}
\begin{align}
& [ Rx, \hat{R}y, \hat{R}z]+[\hat{R}x,  \hat{R}y, Rz]+[ \hat{R}x, Ry,  \hat{R}z]\\
\quad&=\hat{R}([\hat{R}x, Ry, z]+[Rx, \hat{R}y, z]+[x, \hat{R}y, Rz]\nonumber\\
\quad&\quad +[x, Ry, \hat{R}z]+[\hat{R}x, y, Rz]+[Rx, y, \hat{R}z]\nonumber\\
\quad&\quad+R( [\hat{R}x, \hat{R}y, z]+[x, \hat{R}y, \hat{R}z]+[\hat{R}x, y, \hat{R}z]),\nonumber
\end{align}
and
\begin{align}
& [\hat{R}x, \hat{R}y, \hat{R}z]=\hat{R}( [\hat{R}x, \hat{R}y, z]+[x, \hat{R}y, \hat{R}z]+[\hat{R}x, y, \hat{R}z]).\label{6.3}
\end{align}
By (\ref{6.1}),  $\hat{R}$ is a 1-cocycle.

 If $R^1_t$ and $R^2_t$ are equivalent linear deformations of $R$, then there exists $\mathfrak{X}\in \mathfrak{g}\wedge \mathfrak{g}$ such that
 \begin{align*}
(\mathrm{id}+t\mathrm{ad}_{\mathfrak{X}})(R+t\hat{R}_1)(u)=(R+t\hat{R}_2)(\mathrm{id}+t\mathrm{ad}_{\mathfrak{X}}) (u), \ \ \ \forall u\in \mathfrak{g},
\end{align*}
which implies
\begin{align}
\hat{R}_1(u)-\hat{R}_2(u)= R[\mathfrak{X}, u]-[\mathfrak{X}, Ru], \ \ \ \ \ \forall u\in \mathfrak{g},\label{6.4}
\end{align}
by(\ref{6.4}), we obtain
\begin{align*}
\hat{R}_1-\hat{R}_2=\partial_R(x).
\end{align*}
Thus, $\hat{R}_1$ and $\hat{R}_2$ belong to the same cohomology class $H^2_R (\mathfrak{g})$.
\end{proof}
\begin{defn}
A linear deformation of a modified Rota-Baxter operator $R$ generated by $\hat{R}$  is trivial if there
 exists an $\mathfrak{X}\in \mathfrak{g}\wedge \mathfrak{g}$ such that $\mathrm{id}_\mathfrak{g}+t\mathrm{ad}_{\mathfrak{X}}$ is an isomorphism from  $R_t = R + t\hat{R}$ to $R$.
\end{defn}

\begin{defn}
Let $R$ be a modified Rota-Baxter operator on the $3$-Lie algebra $(\mathfrak{g}, [\cdot, \cdot, \cdot])$. An element $\mathfrak{X}\in \mathfrak{g}\wedge \mathfrak{g}$  is called
 a Nijenhuis element associated to $R$ if $\mathfrak{X}$ satisfies
 \begin{align}
& [[\mathfrak{X}, x],[\mathfrak{X}, y],[\mathfrak{X}, z]]=0,\label{6.5}\\
& [x, [\mathfrak{X}, y],[\mathfrak{X}, z]]+[[\mathfrak{X}, x], [\mathfrak{X}, y], z]+[[\mathfrak{X}, x],y,[\mathfrak{X}, z]]=0,\label{6.6}\\
& [\mathfrak{X}, R[\mathfrak{X}, x]] =[\mathfrak{X}, [\mathfrak{X}, Rx]].\label{6.7}
\end{align}
 \end{defn}
 \begin{theorem}
 Let $R$ be a modified Rota-Baxter operator on the $3$-Lie algebra $(\mathfrak{g}, [\cdot, \cdot, \cdot])$. Then for any Nijenhuis element
$\mathfrak{X}\in \mathfrak{g}\wedge \mathfrak{g}$, $R_t =R+t\partial_R(\mathfrak{X})$ is a trivial linear deformation of the modified Rota-Baxter operator $R$.
\end{theorem}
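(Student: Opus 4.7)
The plan is to set $\hat{R}=\partial_R(\mathfrak{X})=d_R(\mathfrak{X})$, so that by definition
\[
\hat{R}(x)=R[\mathfrak{X},x]-[\mathfrak{X},Rx],\qquad \forall x\in\mathfrak{g},
\]
and then verify directly the two requirements for triviality: first, that $\varphi_t=\mathrm{id}_{\mathfrak{g}}+t\,\mathrm{ad}_{\mathfrak{X}}$ is a $3$-Lie algebra morphism of $(\mathfrak{g},[\cdot,\cdot,\cdot])$; second, that $R\circ\varphi_t=\varphi_t\circ R_t$. Once both hold, $R_t=\varphi_t^{-1}\circ R\circ\varphi_t$ is automatically a modified Rota-Baxter operator of weight $\lambda$ (as the conjugate of one under a $3$-Lie automorphism), so the deformation $R_t=R+t\,\partial_R(\mathfrak{X})$ is trivial in the sense of the preceding definition.

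For the first requirement, I would expand
\[
[\varphi_t x,\varphi_t y,\varphi_t z]
=\sum_{k=0}^{3}t^{k}A_k(x,y,z),
\]
and compare with $\varphi_t([x,y,z])=[x,y,z]+t[\mathfrak{X},[x,y,z]]$. The $t^{0}$-terms agree trivially. The $t^{1}$-coefficient is
\[
[[\mathfrak{X},x],y,z]+[x,[\mathfrak{X},y],z]+[x,y,[\mathfrak{X},z]]=[\mathfrak{X},[x,y,z]],
\]
which is nothing but the fundamental identity of the $3$-Lie algebra applied to $\mathfrak{X}=x_0\wedge y_0$. The $t^{2}$-coefficient is
\[
[[\mathfrak{X},x],[\mathfrak{X},y],z]+[[\mathfrak{X},x],y,[\mathfrak{X},z]]+[x,[\mathfrak{X},y],[\mathfrak{X},z]],
\]
which vanishes by condition \eqref{6.6}, and the $t^{3}$-coefficient is $[[\mathfrak{X},x],[\mathfrak{X},y],[\mathfrak{X},z]]=0$ by condition \eqref{6.5}. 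Hence $\varphi_t$ respects the bracket.

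For the second requirement, I would compute
\[
R\,\varphi_t(x)=Rx+t\,R[\mathfrak{X},x],\qquad
\varphi_t R_t(x)=Rx+t\bigl(\hat{R}x+[\mathfrak{X},Rx]\bigr)+t^{2}[\mathfrak{X},\hat{R}x],
\]
and match coefficients. The $t^{1}$-coefficient gives precisely $\hat{R}(x)=R[\mathfrak{X},x]-[\mathfrak{X},Rx]$, which holds by our choice $\hat{R}=d_R(\mathfrak{X})$. The $t^{2}$-coefficient reduces, after substituting the formula for $\hat{R}$, to
\[
[\mathfrak{X},R[\mathfrak{X},x]]-[\mathfrak{X},[\mathfrak{X},Rx]]=0,
\]
which is exactly condition \eqref{6.7}. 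This establishes $R\circ\varphi_t=\varphi_t\circ R_t$.

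The only subtlety worth flagging is that the Nijenhuis element conditions \eqref{6.5}--\eqref{6.7} are tailored so that the expansions in $t$ truncate at the level where each identity cancels exactly one excess term; nothing deeper happens. There is no real obstacle: the verification is essentially a direct expansion in $t$, and the three Nijenhuis conditions are precisely what is needed to kill the three non-zero higher-order terms that would otherwise prevent $\varphi_t$ from being an intertwining automorphism. Combining the two verifications yields $R_t=\varphi_t^{-1}\circ R\circ\varphi_t$, whence $R_t$ is a modified Rota-Baxter operator of weight $\lambda$ and the linear deformation generated by $\partial_R(\mathfrak{X})$ is trivial.
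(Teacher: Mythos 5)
Your proposal is correct, and it reorganizes the argument in a way that differs usefully from the paper's proof. The paper proceeds in two steps: it first verifies \emph{directly} that $R_t=R+t\hat R$ with $\hat R=\partial_R(\mathfrak X)$ satisfies the modified Rota--Baxter identity for all $t$, by checking the cubic coefficient equation \eqref{6.3} through a long expansion (the linear coefficient \eqref{6.1} being automatic since $\hat R$ is a coboundary, hence a cocycle), and only then remarks that $\mathrm{id}+t\,\mathrm{ad}_{\mathfrak X}$ intertwines $R_t$ with $R$ and preserves the bracket. You instead verify only the two intertwining identities --- that $\varphi_t=\mathrm{id}+t\,\mathrm{ad}_{\mathfrak X}$ is a bracket morphism (fundamental identity at order $t$, \eqref{6.6} at order $t^2$, \eqref{6.5} at order $t^3$) and that $R\circ\varphi_t=\varphi_t\circ R_t$ (definition of $\hat R$ at order $t$, \eqref{6.7} at order $t^2$) --- and then deduce that $R_t=\varphi_t^{-1}\circ R\circ\varphi_t$ is a modified Rota--Baxter operator from the paper's earlier observation that conjugation by a $3$-Lie automorphism preserves modified Rota--Baxter operators. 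This buys you two things: it eliminates the lengthy direct verification of \eqref{6.3}, and it automatically covers the quadratic coefficient equation, which the paper's proof does not explicitly address. The only point you should make explicit is the invertibility of $\varphi_t$: since $\varphi_0=\mathrm{id}$, the map $\varphi_t$ is invertible for $t$ in a neighbourhood of $0$ (in finite dimension), which is exactly the range required by the definition of a linear deformation; the paper's definition of triviality tacitly assumes this as well, so you are on the same footing.
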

\begin{proof}
Taking $\hat{R}=\partial_R(\mathfrak{X})$, for any  $x, y,z \in \mathfrak{g}$, we have  $\hat{R}x=R[\mathfrak{X},x]-[\mathfrak{X},Rx].$ To show that $ R_t$ is a linear deformation of $R$, we need to
 verify that (\ref{6.3}) holds, in fact, we have
 \begin{align*}
&\hat{R}( [\hat{R}x, \hat{R}y, z]+[x, \hat{R}y, \hat{R}z]+[\hat{R}x, y, \hat{R}z])-[\hat{R}x, \hat{R}y, \hat{R}z]\\
&=R[\mathfrak{X},[R[\mathfrak{X},x]-[\mathfrak{X},Rx], R[\mathfrak{X},y]-[\mathfrak{X},Ry], z]]\\
&\quad-[\mathfrak{X},R[R[\mathfrak{X},x]-[\mathfrak{X},Rx], R[\mathfrak{X},y]-[\mathfrak{X},Ry], z]]\\
&\quad+R[\mathfrak{X},[x, R[\mathfrak{X},y]-[\mathfrak{X},Ry], R[\mathfrak{X},z]-[\mathfrak{X},Rz]]]\\
&\quad-[\mathfrak{X},R[x, R[\mathfrak{X},y]-[\mathfrak{X},Ry], R[\mathfrak{X},z]-[\mathfrak{X},Rz]]]\\
&\quad+R[\mathfrak{X},[R[\mathfrak{X},x]-[\mathfrak{X},Rx], y, R[\mathfrak{X},z]-[\mathfrak{X},Rz]])]\\
&\quad-[\mathfrak{X},R[R[\mathfrak{X},x]-[\mathfrak{X},Rx], y, R[\mathfrak{X},z]-[\mathfrak{X},Rz]])]\\
&\quad-[R[\mathfrak{X},x]-[\mathfrak{X},Rx], R[\mathfrak{X},y]-[\mathfrak{X},Ry], R[\mathfrak{X},z]-[\mathfrak{X},Rz]]\\
&=0.
\end{align*}
 Thus, $ R_t$ is a linear deformation of the modified Rota-Baxter operator $R$. Since $\mathfrak{X}$ is a
 Nijenhuis element, we have $(\mathrm{id}+t\mathrm{ad}_{\mathfrak{X}})[x, y, z]=[x+t[\mathfrak{X}, x], y+t[\mathfrak{X}, y], z+t[\mathfrak{X}, z]]$ and $
R(x + t[\mathfrak{X}, x]) =(\mathrm{id}+t\mathrm{ad}_{\mathfrak{X}})(Rx+t\hat{R}x)$. Thus,   $R_t =R+t\partial_R(\mathfrak{X})$ is a trivial linear
 deformation of the modified Rota-Baxter operator $R$.
\end{proof}

\begin{prop}
 Let $\hat{R}$ generate a linear deformation of a modified Rota-Baxter operator $R$ on a $3$-Lie algebra
$(\mathfrak{g}, [\cdot, \cdot, \cdot])$. Then $\omega$ defined by
\begin{align*}
\omega(x, y, z)&=[Rx, \hat{R}y, z]+[\hat{R}x, Ry, z]+[Rx, y, \hat{R}z]+[\hat{R}x, y, Rz]\\
&\quad+[x, \hat{R}y, Rz]+[x, Ry, \hat{R}z],
\end{align*}
generates a linear deformation of the $3$-Lie algebra $(\mathfrak{g}, [\cdot, \cdot, \cdot]_R)$ given by the modified Rota-Baxter operator $R$.
\end{prop}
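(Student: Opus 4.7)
The plan is to identify $\omega$ as the first-order coefficient in the Taylor expansion in $t$ of the family of $3$-Lie brackets $\{[\cdot,\cdot,\cdot]_{R_t}\}$ produced by applying formula~(\ref{3.2}) to $R_t = R + t\hat{R}$, and then to extract the required identity from the fact that each $[\cdot,\cdot,\cdot]_{R_t}$ is itself a $3$-Lie bracket.

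First, I would expand
\[
[x,y,z]_{R_t} = [R_t x, R_t y, z] + [x, R_t y, R_t z] + [R_t x, y, R_t z] + \lambda [x,y,z]
\]
in powers of $t$ and read off the coefficients. The constant-in-$t$ term is $[x,y,z]_R$; the $t^1$-term is precisely the six-term expression $\omega(x,y,z)$ stated in the proposition; the $t^2$-term is $[\hat{R}x,\hat{R}y,z] + [x,\hat{R}y,\hat{R}z] + [\hat{R}x,y,\hat{R}z]$. In particular $\omega = \tfrac{d}{dt}\big|_{t=0}[\cdot,\cdot,\cdot]_{R_t}$, so $\omega$ arises as the infinitesimal of a smooth curve in the space of $3$-Lie brackets on $\mathfrak{g}$ passing through $[\cdot,\cdot,\cdot]_R$.

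Second, since $R_t$ is a modified Rota-Baxter operator of weight $\lambda$ on $(\mathfrak{g},[\cdot,\cdot,\cdot])$ for every $t \in (-\epsilon,\epsilon)$ and $\mathfrak{g}^1 \subset \mathcal{C}(\mathfrak{g})$, the bracket $[\cdot,\cdot,\cdot]_{R_t}$ is a genuine $3$-Lie bracket for each such $t$. The fundamental identity applied to $[\cdot,\cdot,\cdot]_{R_t}$ is then a polynomial equation in $t$ vanishing on an open interval, hence vanishing identically. Matching coefficients, the $t^0$-part reproduces the fundamental identity for $[\cdot,\cdot,\cdot]_R$, while the $t^1$-part yields the $2$-cocycle relation on $\omega$ with respect to $(\mathfrak{g},[\cdot,\cdot,\cdot]_R)$. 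This is the precise statement that $\omega$ generates a linear deformation of the $3$-Lie algebra $(\mathfrak{g},[\cdot,\cdot,\cdot]_R)$.

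The main obstacle is purely bookkeeping in step one: one must carefully track the six positions in which $R$ may be replaced by $\hat{R}$ to confirm that the $t^1$-coefficient of the expansion matches the six-term expression $\omega$ displayed in the proposition. Once this is in place, the extraction of the cocycle condition from the $t^1$-term of the fundamental identity is formal and uses no identity beyond the modified Rota-Baxter relation satisfied by $R_t$, together with the standing hypothesis $\mathfrak{g}^1 \subset \mathcal{C}(\mathfrak{g})$ which guarantees that $[\cdot,\cdot,\cdot]_{R_t}$ is indeed a $3$-Lie bracket in the first place.
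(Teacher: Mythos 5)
Your overall strategy --- expand $[\cdot,\cdot,\cdot]_{R_t}$ in powers of $t$ and exploit that each $[\cdot,\cdot,\cdot]_{R_t}$ is a $3$-Lie bracket --- is exactly the paper's. In fact your expansion is \emph{more} accurate than the one printed in the paper: the paper's proof writes $[x,y,z]_{R_t}=[x,y,z]_R+t\,\omega(x,y,z)$ with no quadratic term and concludes in one line, whereas, as you correctly observe, the expansion carries the additional term $t^2\nu(x,y,z)$ with $\nu(x,y,z)=[\hat{R}x,\hat{R}y,z]+[x,\hat{R}y,\hat{R}z]+[\hat{R}x,y,\hat{R}z]$, which does not vanish in general.

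The gap is in the final step, and it afflicts the paper's argument as well. Reading ``$\omega$ generates a linear deformation of $(\mathfrak{g},[\cdot,\cdot,\cdot]_R)$'' in the same sense as the paper's definition of a linear deformation of a modified Rota--Baxter operator, the claim should be that $[\cdot,\cdot,\cdot]_R+t\omega$ is itself a $3$-Lie bracket for all small $t$; by Theorem~\ref{Thm: graded Lie structure of 3-Lie complex} this amounts to the two conditions $[\mu_R,\omega]_{\mathrm{R}}=0$ and $[\omega,\omega]_{\mathrm{R}}=0$, where $\mu_R$ denotes $[\cdot,\cdot,\cdot]_R$. Matching coefficients of $t$ in
\begin{align*}
[\mu_R+t\omega+t^2\nu,\ \mu_R+t\omega+t^2\nu]_{\mathrm{R}}=0
\end{align*}
yields $[\mu_R,\omega]_{\mathrm{R}}=0$ at order $t$, which is the $2$-cocycle identity you extract; but at order $t^2$ it yields only $[\omega,\omega]_{\mathrm{R}}=-2[\mu_R,\nu]_{\mathrm{R}}$, which is not obviously zero. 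So your argument establishes the infinitesimal (cocycle) part of the statement but not that $[\cdot,\cdot,\cdot]_R+t\omega$ is a genuine $3$-Lie bracket for each $t$. You should either state explicitly that ``generates a linear deformation'' is being used in the weak, cocycle sense (in which case your proof is complete and your last paragraph is the right conclusion), or supply the missing quadratic-order verification $[\omega,\omega]_{\mathrm{R}}=0$; as it stands, neither your proposal nor the paper's proof does the latter.
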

\begin{proof}
For any  $x, y,z \in \mathfrak{g}$,  we have
\begin{align*}
[x, y, z]_{R_t}&=[Rx, Ry, z]+[x, Ry, Rz]+[Rx, y, Rz]+\lambda[x, y, z]+t([Rx, \hat{R}y, z]\\
\quad&\quad+[\hat{R}x, Ry, z]+[Rx, y, \hat{R}z]+[\hat{R}x, y, Rz]\\
\quad&\quad+[x, \hat{R}y, Rz]+[x, Ry, \hat{R}z])\\
&=[x, y, z]_{\mathrm{R}}+t\omega(x, y, z).
\end{align*}
 Since $(\mathfrak{g}, [\cdot, \cdot, \cdot]_{R_t})$ are $3$-Lie algebras, we have that $\omega$ generates a linear deformation of the $3$-Lie
 algebra $(\mathfrak{g}, [\cdot, \cdot, \cdot]_R)$ given by the modified Rota-Baxter operator $R$.
\end{proof}
 \begin{theorem}
Let $\mathfrak{X}\in \mathfrak{g}\wedge \mathfrak{g}$  be a Nijenhuis element associated to a modified Rota-Baxter operator $R$. Then $\mathrm{ad}_\mathfrak{X}$ is a
 Nijenhuis operator on the $3$-Lie algebra $(\mathfrak{g}, [\cdot, \cdot, \cdot]_R)$.
\end{theorem}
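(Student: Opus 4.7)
The plan is to verify the Nijenhuis identity for $N := \mathrm{ad}_{\mathfrak{X}}$ on $(\mathfrak{g}, [\cdot,\cdot,\cdot]_R)$ directly, exploiting the standing assumption $\mathfrak{g}^1 \subset \mathcal{C}(\mathfrak{g})$ imposed at the beginning of Section~3 to ensure $[\cdot,\cdot,\cdot]_R$ is a $3$-Lie bracket. Writing $\mathfrak{X} = a \wedge b$, so that $Nu = [a,b,u]$ for all $u \in \mathfrak{g}$, the key observation is twofold: first, every $Nu$ lies in $\mathfrak{g}^1 \subset \mathcal{C}(\mathfrak{g})$, so by skew-symmetry and the definition of the center, any triple bracket in $\mathfrak{g}$ containing an argument of the form $N(\cdot)$ in any slot vanishes; second, every element of the form $[u,v,w]_R$ is a sum of triple brackets, hence also lies in $\mathfrak{g}^1 \subset \mathcal{C}(\mathfrak{g})$, and $N$ annihilates $\mathcal{C}(\mathfrak{g})$.

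The Nijenhuis identity to verify is
\[
[Nx, Ny, Nz]_R = N\bigl([Nx, Ny, z]_R + [x, Ny, Nz]_R + [Nx, y, Nz]_R\bigr) - N^2\bigl([Nx, y, z]_R + [x, Ny, z]_R + [x, y, Nz]_R\bigr).
\]
For the left-hand side, expanding $[Nx,Ny,Nz]_R$ by the defining formula yields the four summands $[RNx,RNy,Nz]$, $[Nx,RNy,RNz]$, $[RNx,Ny,RNz]$, and $\lambda[Nx,Ny,Nz]$; each contains at least one $N(\cdot)$ entry, so by the first observation each summand vanishes (the last one also vanishes by the Nijenhuis-element condition~(6.5)). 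For the right-hand side, every inner bracket such as $[Nx,Ny,z]_R$ lies in $\mathfrak{g}^1 \subset \mathcal{C}(\mathfrak{g})$ by the second observation; applying $N$ (or $N^2$) to such a central element yields zero. Hence both sides equal zero, and the Nijenhuis identity reduces to the tautology $0=0$.

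The main obstacle is conceptual rather than computational: under the centrality assumption in force, the Nijenhuis-element hypotheses (6.5)--(6.7) play only an auxiliary role. Indeed (6.5) and (6.6) are already consequences of $\mathfrak{g}^1 \subset \mathcal{C}(\mathfrak{g})$, while (6.7) was invoked in the preceding theorem to guarantee triviality of the corresponding linear deformation; neither is strictly required for the Nijenhuis identity itself. A longer computational proof that explicitly invokes (6.5)--(6.7), the fundamental identity, the modified Rota-Baxter equation for $R$, and the relation $NRN = N^2 R$ (equivalent to~(6.7)) is also possible, obtained by expanding both sides, grouping terms by the number of occurrences of $R$, and invoking each condition at the appropriate step; the present shortcut is considerably cleaner and matches the spirit of the section.
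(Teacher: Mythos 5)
Your proof is correct, but it takes a genuinely different route from the paper's. The paper argues by a long explicit expansion of $[\mathrm{ad}_{\mathfrak{X}}x,\mathrm{ad}_{\mathfrak{X}}y,\mathrm{ad}_{\mathfrak{X}}z]_R$ and of the right-hand side of the Nijenhuis identity, invoking conditions (6.5)--(6.7) to cancel terms and concluding that both sides equal $[R[\mathfrak{X},x],R[\mathfrak{X},y],z]+[x,R[\mathfrak{X},y],R[\mathfrak{X},z]]+[R[\mathfrak{X},x],y,R[\mathfrak{X},z]]$. You instead observe that under the standing hypothesis $\mathfrak{g}^1\subset\mathcal{C}(\mathfrak{g})$ --- which is indeed in force here, since the paper imposes it precisely so that $[\cdot,\cdot,\cdot]_R$ is a $3$-Lie bracket and the statement of the theorem presupposes this --- the operator $N=\mathrm{ad}_{\mathfrak{X}}$ and the bracket $[\cdot,\cdot,\cdot]_R$ both take values in $\mathfrak{g}^1\subset\mathcal{C}(\mathfrak{g})$, while any original bracket with a central entry vanishes and $N$ annihilates $\mathcal{C}(\mathfrak{g})$; hence every term on both sides is zero. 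This is airtight and much shorter, and it honestly exposes that (6.5)--(6.6) are automatic and the statement is essentially vacuous under the standing assumption --- information the paper's computation obscures. In fact your treatment of the left-hand side is the more reliable one: the paper's first displayed equality expands $[\mathrm{ad}_{\mathfrak{X}}x,\mathrm{ad}_{\mathfrak{X}}y,\mathrm{ad}_{\mathfrak{X}}z]_R$ with $z$, $x$, $y$ (rather than $[\mathfrak{X},z]$, $[\mathfrak{X},x]$, $[\mathfrak{X},y]$) in one slot of each summand, which is not the definition of $[\cdot,\cdot,\cdot]_R$; the correct expansion keeps $[\mathfrak{X},\cdot]$ in every slot and therefore vanishes by centrality, exactly as you claim. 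The only cosmetic point is that you write $\mathfrak{X}=a\wedge b$; for a general $\mathfrak{X}=\sum_i a_i\wedge b_i$ the argument goes through verbatim by linearity, since $\mathrm{ad}_{\mathfrak{X}}$ still maps into $\mathfrak{g}^1$ and still kills $\mathcal{C}(\mathfrak{g})$.
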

\begin{proof}
For any $x, y,z \in \mathfrak{g}$, by (\ref{6.5})-(\ref{6.7}), we have
\begin{align*}
&[\mathrm{ad}_\mathfrak{X}x, \mathrm{ad}_\mathfrak{X}y, \mathrm{ad}_\mathfrak{X}z]_R\\
&=[R[\mathfrak{X}, x], R[\mathfrak{X}, y], z]+[x, R[\mathfrak{X}, y], R[\mathfrak{X}, z]]+[R[\mathfrak{X}, x], y, R[\mathfrak{X}, z]]\\
\quad&\quad+\lambda[[\mathfrak{X}, x], [\mathfrak{X}, y], [\mathfrak{X}, z]]\\
&=[R[\mathfrak{X}, x], R[\mathfrak{X}, y], z]+[x, R[\mathfrak{X}, y], R[\mathfrak{X}, z]]+[R[\mathfrak{X}, x], y, R[\mathfrak{X}, z]].
\end{align*}
On the other hand,
\begin{align*}
&\mathrm{ad}_\mathfrak{X}\big( [\mathrm{ad}_\mathfrak{X}x, \mathrm{ad}_\mathfrak{X}y, z]_R+[x, \mathrm{ad}_\mathfrak{X}y, \mathrm{ad}_\mathfrak{X}z]_R+[\mathrm{ad}_\mathfrak{X}x, y, \mathrm{ad}_\mathfrak{X}z]_R\\
\quad & \quad -\mathrm{ad}_\mathfrak{X}([\mathrm{ad}_\mathfrak{X}x, y, z]_R+[x, \mathrm{ad}_\mathfrak{X}y, z]_R+[x, y, \mathrm{ad}_\mathfrak{X}z]_R-\mathrm{ad}_\mathfrak{X}[x, y, z]_R)\big)\\
\quad&=\mathrm{ad}_\mathfrak{X}\Big( [[\mathfrak{X}, x], [\mathfrak{X}, y], z]_R+[x, [\mathfrak{X}, y], [\mathfrak{X}, z]]_R+[[\mathfrak{X}, x], y, [\mathfrak{X}, z]]_R\\
\quad&\quad -\mathrm{ad}_\mathfrak{X}([[\mathfrak{X}, x], y, z]_R+[x, [\mathfrak{X}, y], z]_R+[x, y, [\mathfrak{X}, z]]_R-\mathrm{ad}_\mathfrak{X}[x, y, z]_R)\Big)\\
&=\mathrm{ad}_\mathfrak{X}\Big( [R[\mathfrak{X}, x], R[\mathfrak{X}, y], z]+[[\mathfrak{X}, x], R[\mathfrak{X}, y], Rz]+[R[\mathfrak{X}, x], [\mathfrak{X}, y], Rz]\\
\quad&\quad+[Rx, R[\mathfrak{X}, y], [\mathfrak{X}, z]]+[x, R[\mathfrak{X}, y], R[\mathfrak{X}, z]]+[Rx, [\mathfrak{X}, y], R[\mathfrak{X}, z]]\\
\quad&\quad+[R[\mathfrak{X}, x], Ry, [\mathfrak{X}, z]]+[[\mathfrak{X}, x], Ry, R[\mathfrak{X}, z]]+[R[\mathfrak{X}, x], y, R[\mathfrak{X}, z]]\\
\quad&\quad-\mathrm{ad}_\mathfrak{X}([R[\mathfrak{X}, x], Ry, z]+[R[\mathfrak{X}, x], y, Rz]+[[\mathfrak{X}, x], Ry, Rz]+[Rx, R[\mathfrak{X}, y], z]\\
\quad&\quad+[x, R[\mathfrak{X}, y], Rz]+[Rx, [\mathfrak{X}, y], Rz]+[Rx, Ry, [\mathfrak{X}, z]]+[x, Ry, R[\mathfrak{X}, z]]\\
\quad&\quad +[Rx, y, R[\mathfrak{X}, z]]-[[\mathfrak{X},Rx], Ry, z] -[Rx, [\mathfrak{X},Ry], z]-[Rx, Ry, [\mathfrak{X},z]]\\
\quad&\quad-[[\mathfrak{X}, x], Ry, Rz]-[x, [\mathfrak{X}, Ry], Rz]-[x, Ry, [\mathfrak{X}, Rz]]\\
\quad&\quad-[[\mathfrak{X},Rx], y, Rz]-[Rx, [\mathfrak{X},y], Rz]-[Rx, y, [\mathfrak{X},Rz]])\Big)\\
&=\mathrm{ad}_\mathfrak{X}\Big( [R[\mathfrak{X}, x], R[\mathfrak{X}, y], z]+[Rx, R[\mathfrak{X}, y], [\mathfrak{X}, z]]\\
\quad&\quad+[x, R[\mathfrak{X}, y], R[\mathfrak{X}, z]]+[R[\mathfrak{X}, x], y, R[\mathfrak{X}, z]]-[ R[\mathfrak{X}, x]],[\mathfrak{X}, Ry], z]\\
\quad&\quad-[R[\mathfrak{X}, x], y, [\mathfrak{X}, Rz]]-[Rx, R[\mathfrak{X}, y]], [\mathfrak{X},z]]\\
\quad&\quad-[ x, R[\mathfrak{X}, y], [\mathfrak{X},Rz]]-[ x, [\mathfrak{X},Ry], R[\mathfrak{X}, z]]-[ [\mathfrak{X},Rx], y, R[\mathfrak{X}, z]]\\
\quad&\quad+[ [\mathfrak{X},Rx], [\mathfrak{X}, Ry], z]+[x,[\mathfrak{X}, Ry], [\mathfrak{X}, Rz]]+[Rx, [\mathfrak{X},y], [\mathfrak{X},Rz]]\Big)\\
&=[[\mathfrak{X},R[\mathfrak{X}, x]], R[\mathfrak{X}, y], z]+[[\mathfrak{X},Rx], R[\mathfrak{X}, y], [\mathfrak{X}, z]]+[[\mathfrak{X}, x], R[\mathfrak{X}, y], R[\mathfrak{X}, z]]\\
&\quad+[[\mathfrak{X}, R[\mathfrak{X}, x]], Ry, [\mathfrak{X}, z]]-[ [\mathfrak{X}, R[\mathfrak{X}, x]],[\mathfrak{X}, Ry], z]-[[\mathfrak{X}, R[\mathfrak{X}, x]],[\mathfrak{X}, y], Rz]\\
&\quad-[[\mathfrak{X}, R[\mathfrak{X}, x]], y, [\mathfrak{X}, Rz]]-[[\mathfrak{X}, [\mathfrak{X}, Rx]], R[\mathfrak{X}, y], z]-[[\mathfrak{X}, Rx], R[\mathfrak{X}, y]], [\mathfrak{X},z]]\\
&\quad-[[\mathfrak{X}, x], R[\mathfrak{X}, y], [\mathfrak{X},Rz]]-[[\mathfrak{X}, x], [\mathfrak{X},Ry], R[\mathfrak{X}, z]]-[[\mathfrak{X}, [\mathfrak{X},Rx]], y, R[\mathfrak{X}, z]]\\
&\quad+[[\mathfrak{X}, [\mathfrak{X},Rx]], [\mathfrak{X}, Ry], z]\\
&=[[\mathfrak{X},Rx], R[\mathfrak{X}, y], [\mathfrak{X}, z]]+[[\mathfrak{X}, x], R[\mathfrak{X}, y], R[\mathfrak{X}, z]]+[[\mathfrak{X}, R[\mathfrak{X}, x]], Ry, [\mathfrak{X}, z]]\\
&\quad-[[\mathfrak{X}, R[\mathfrak{X}, x]], y, [\mathfrak{X}, Rz]]-[[\mathfrak{X}, Rx], R[\mathfrak{X}, y]], [\mathfrak{X},z]]-[[\mathfrak{X}, x], R[\mathfrak{X}, y], [\mathfrak{X},Rz]]\\
&\quad-[[\mathfrak{X}, x], [\mathfrak{X},Ry], R[\mathfrak{X}, z]]-[[\mathfrak{X}, R[\mathfrak{X}, x]], y, R[\mathfrak{X}, z]]\\
&=[R[\mathfrak{X}, x], R[\mathfrak{X}, y], z]+[x, R[\mathfrak{X}, y], R[\mathfrak{X}, z]]+[R[\mathfrak{X}, x], y, R[\mathfrak{X}, z]].
\end{align*}
Thus, we have
\begin{align*}
&[\mathrm{ad}_\mathfrak{X}x, \mathrm{ad}_\mathfrak{X}y, \mathrm{ad}_\mathfrak{X}z]_R\\
&=\mathrm{ad}_\mathfrak{X}\big( [\mathrm{ad}_\mathfrak{X}x, \mathrm{ad}_\mathfrak{X}y, z]_R+[x, \mathrm{ad}_\mathfrak{X}y, \mathrm{ad}_\mathfrak{X}z]_R+[\mathrm{ad}_\mathfrak{X}x, y, \mathrm{ad}_\mathfrak{X}z]_R\\
\quad &  -\mathrm{ad}_\mathfrak{X}([\mathrm{ad}_\mathfrak{X}x, y, z]_R+[x, \mathrm{ad}_\mathfrak{X}y, z]_R+[x, y, \mathrm{ad}_\mathfrak{X}z]_R-\mathrm{ad}_\mathfrak{X}[x, y, z]_R)\big).
\end{align*}
 which implies that $\mathrm{ad}_\mathfrak{X}$ is a
 Nijenhuis operator on the $3$-Lie algebra $(\mathfrak{g}, [\cdot, \cdot, \cdot]_R)$.
\end{proof}

\section{$L_\infty[1]$-structure for (relative and absolute) modified Rota-Baxter 3-Lie algebras}\label{L infinty-structure for (relative and absolute) modified Rota-Baxter 3-Lie algebras}

\subsection{$L_\infty[1]$-algebras and (generalised) derived brackets}
In this subsection, we recall some preliminaries on $L_\infty[1]$-algebras.
\begin{defn}
	An $L_\infty[1]$-algebra is a pair $(L = \oplus_{i \in \mathbb{Z}} L_i, \{ l_k \}_{k=1}^\infty)$ consisting of a graded vector space $L = \oplus_{i \in \mathbb{Z}} L_i$ together with a collection $\{ l_k \}_{k=1}^\infty$ of degree $1$ multilinear maps $l_k : L^{\otimes k} \rightarrow L$ (for $k \geq 1$) that satisfy

	- graded symmetry: ~~$l_k(x_{\sigma(1)}\otimes\cdots\otimes x_{\sigma(k)})=\epsilon(\sigma)l_k(x_1\otimes\cdots\otimes x_k),$ for any $\sigma\in S_k$ and $k \geq 1$,
	
	- higher Jacobi identities:
	\begin{align*}
		\sum_{i+j=n+1}\sum_{\sigma\in S_{(i,n-i)}}\epsilon(\sigma)l_j\left(l_i\left(x_{\sigma(1)}\otimes\cdots\otimes x_{\sigma(i)} \right) \otimes x_{\sigma(i+1)}\otimes \cdots\otimes x_{\sigma(n) }\right)=0,
	\end{align*}
	for $x_1, \ldots, x_n \in L$ and $n \geq 1$. Here $S_{(i,n-i)}$ is the set of all $(i, n-i)$-shuffles i.e., permutations \( \sigma \in S_n \) such that:
	\[
	\sigma(1) <   \cdots <   \sigma(i_1),
	\sigma(i_1+1) <   \cdots <   \sigma(i_1 + i_2),  \dots,
	\sigma(i_1 + \cdots + i_{r-1} + 1) <   \cdots <   \sigma(n).
	\] and $\epsilon (\sigma) = \epsilon (\sigma; x_1, \ldots, x_n)$ is the standard Koszul sign in the graded context.
\end{defn}

Throughout the paper, all $L_\infty[1]$-algebras are assumed to be weakly filtered. In other words, certain infinite summations are always convergent.

\begin{defn}
	Let $(L = \oplus_{i \in \mathbb{Z}} L_i, \{ l_k \}_{k=1}^\infty)$  be an $L_\infty[1]$-algebra. An element $\alpha \in L_0$ is said to be a Maurer-Cartan element if it satisfies
	\begin{align*}
		\sum_{k=1}^\infty \frac{1}{k !} ~\! l_k (\alpha\otimes \cdots\otimes \alpha) = 0.
	\end{align*}
\end{defn}

\begin{prop}\cite[Twisting procedure]{Get09}\label{Twisting procedure}
	Let $\alpha$ be a Maurer-Cartan element of an $L_\infty[1]$-algebra $L$. The twisted $L_\infty[1]$-algebra structure on $L$ is given by a sequence of multilinear maps $l_n^\alpha: L^{\otimes n} \to L$ defined by
	$$
	l_n^\alpha\left(x_1 \otimes \cdots \otimes x_n\right)=\sum_{i=0}^{\infty} \frac{1}{i!} l_{n+i}\left( \underbrace{\alpha\otimes \cdots\otimes \alpha}_{i \text{ times}}\otimes x_1 \otimes \cdots\otimes  x_n \right), \quad \forall x_1, \ldots, x_n \in L,
	$$ whenever the infinite sum converges (or terminates in a finite case)
	
	In particular, a Maurer-Cartan element $\alpha \in L_0$ induces a degree-one differential
	\begin{align*}
		l_1^\alpha (x) = \sum_{i=0}^\infty \frac{1}{i !} ~ \! l_{i+1} (\underbrace{\alpha\otimes \cdots\otimes \alpha}_{i \text{ times}}\otimes x),
	\end{align*}
	which turns $(L, l_1^\alpha)$ into a cochain complex. The corresponding cohomology groups are referred to as the cohomology induced by the Maurer-Cartan element $\alpha$.

\end{prop}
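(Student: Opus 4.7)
The plan is to verify the two defining axioms of an $L_\infty[1]$-algebra for the twisted family $\{l_n^\alpha\}_{n\geq 1}$: graded symmetry and the higher Jacobi identities, with the Maurer-Cartan equation for $\alpha$ providing the key input.

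First, graded symmetry is immediate: since $\alpha$ has degree $0$, inserting copies of $\alpha$ on the left of the arguments of $l_{n+i}$ does not interfere with any permutation of the remaining entries $x_1, \ldots, x_n$. Thus each $l_n^\alpha$ inherits graded symmetry from the maps $l_{n+i}$.

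Next, I would substitute the definition of $l_n^\alpha$ into the $n$-th higher Jacobi identity and reorganize the resulting doubly-infinite sum by fixing the total number $r = p + q$ of copies of $\alpha$ that occur. For each fixed $r$, the inner sum over $(p, q)$ with $p + q = r$, together with the $(i, n-i)$-shuffles $\sigma$, can be compared, up to the binomial factor $\binom{r}{p}$, with the higher Jacobi identity of the original $L_\infty[1]$-algebra $L$ applied to the tuple $(\alpha^{\otimes r} \otimes x_1 \otimes \cdots \otimes x_n)$. The shuffles that merely permute the $\alpha$'s among themselves are absorbed by the factor $1/r!$ thanks to the graded symmetry of $\alpha$.

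The principal obstacle is the careful isolation of the ``degenerate'' contributions in which the inner bracket $l_{i+p}$ contains only copies of $\alpha$ (i.e.\ $i = 0$). These terms are absent from the expression for the higher Jacobi identity of $l_n^\alpha$, which enforces $i \geq 1$, but they appear naturally when the reorganized sum is matched with the higher Jacobi identities of $L$. After regrouping, they factor as $l_{j+q}\bigl(\alpha^{\otimes q}\otimes\bigl(\sum_{k\geq 1}\tfrac{1}{k!}l_k(\alpha^{\otimes k})\bigr)\otimes x_{\sigma(1)}\otimes \cdots \bigr)$ and thus vanish identically by the Maurer-Cartan equation satisfied by $\alpha$. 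Once these terms are discarded, the remaining identity collapses to zero, yielding the higher Jacobi identity for $l_n^\alpha$. The statement that $l_1^\alpha$ squares to zero is then the case $n = 1$ of this argument.
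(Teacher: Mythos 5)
Your sketch is correct and is essentially the standard argument for the twisting procedure, which the paper itself does not prove but simply cites from Getzler: expand the higher Jacobi identity for $l_n^\alpha$, regroup by the total number $r$ of inserted copies of $\alpha$ (the $1/r!$ absorbing the shuffles that only permute $\alpha$'s, with no Koszul signs since $\alpha$ has degree $0$), match the result against the Jacobi identities of $L$ evaluated on $\alpha^{\otimes r}\otimes x_1\otimes\cdots\otimes x_n$, and note that the leftover terms carry the curvature $\sum_{k\ge 1}\tfrac{1}{k!}\,l_k(\alpha^{\otimes k})$ as an inner argument and hence vanish by the Maurer--Cartan equation. The only point you leave implicit is the legitimacy of rearranging the doubly infinite sums, which is exactly what the paper's standing assumption that all $L_\infty[1]$-algebras are weakly filtered is there to guarantee.
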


\medskip

An important class of $L_\infty[1]$-algebras arise from $V$-datas \cite{V05}. Recall that a $V$-data is a quadrupe $(V, \mathfrak{a}, \mathcal{P}, \triangle)$ in which
\begin{itemize}
	\item [(a)]$V$ is a graded Lie algebra (with the graded Lie bracket $[~,~]$);
	\item [(b)]$\mathfrak{a} \subset V$ is an abelian graded Lie subalgebra;
	\item [(c)]$\mathcal{P} : V \rightarrow \mathfrak{a}$ is a projection map with the property that $\mathrm{ker}(\mathcal{P}) \subset V$ is a graded Lie subalgebra;
	\item [(d)]and $\triangle \in \mathrm{ker}(\mathcal{P})^1$ that satisfies $[\triangle, \triangle] = 0$.
\end{itemize}


The construction of $L_\infty$-subalgebras using $V$-data has been studied in \cite{DM22, FZ15, LQYZ24}. We summarize this method in the following simplified theorem.

\begin{theorem}\label{v-th}
Let $(V, \mathfrak{a}, \mathcal{P}, \triangle)$ be a $V$-data. Suppose $V' \subset V$ is a graded Lie subalgebra that satisfies $[\triangle, V'] \subset V'$. Then the graded vector space $V'[1] \oplus \mathfrak{a}$ carries an $L_\infty[1]$-algebra structure with the multilinear operations
\begin{align*}
	l_1 (x[1] \otimes a) &= \big( -[\triangle, x][1],~ \mathcal{P}(x + [\triangle, a]) \big),\\
	l_2 (x[1] \otimes y[1]) &= (-1)^{|x|} [x, y][1],\\
	l_k (x[1] \otimes a_1 \otimes \cdots \otimes a_{k-1}) &= \mathcal{P} \left[ \cdots [[x, a_1], a_2], \ldots, a_{k-1} \right], \quad \text{for } k \geq 2,\\
	l_k (a_1 \otimes \cdots \otimes a_k) &= \mathcal{P} \left[ \cdots [[\triangle, a_1], a_2], \ldots, a_k \right], \quad \text{for } k \geq 2.
\end{align*}
Here $x, y$ are homogeneous elements in $V'$ and $a_1, \ldots, a_k$ are homogeneous elements in $\mathfrak{a}$. Up to the permutations of the above entries, all other multilinear operations vanish.

Moreover, let $\iota: V'' \hookrightarrow V'$ be a monomorphism of graded Lie algebras such that $[\triangle, \iota(V'')] \subset \iota(V'')$. Then the graded vector space $V''[1] \oplus \mathfrak{a}$ also carries an $L_\infty[1]$-algebra structure with the multilinear operations:
\begin{align*}
	l_1' (x[1] \otimes a) &= \big( -\iota^{-1}[\triangle, \iota(x)][1],~ \mathcal{P}(\iota(x) + [\triangle, a]) \big),\\
	l_2' (x[1] \otimes y[1]) &= (-1)^{|x|} \iota^{-1}[\iota(x), \iota(y)][1],\\
	l_k' (x[1] \otimes a_1 \otimes \cdots \otimes a_{k-1}) &= \mathcal{P} \left[ \cdots [[\iota(x), a_1], a_2], \ldots, a_{k-1} \right], \quad \text{for } k \geq 2,\\
	l_k' (a_1 \otimes \cdots \otimes a_k) &= \mathcal{P} \left[ \cdots [[\triangle, a_1], a_2], \ldots, a_k \right], \quad \text{for } k \geq 2.
\end{align*}
Here $x, y$ are homogeneous elements in $V''$ and $a_1, \ldots, a_k$ are homogeneous elements in $\mathfrak{a}$.

In addition, the map $\iota$ induces a monomorphism of $L_\infty[1]$-algebras:
\[
\tilde{\iota} : \big(V_{\mathfrak{h}}[1] \oplus \mathfrak{a}, \{l^{\mathrm{RB}}_k\}_{k \geq 1} \big)
\longrightarrow
\big(V_{r\mathrm{Pair}}[1] \oplus \mathfrak{a}, \{l^{m\mathrm{RB}}_k\}_{k \geq 1} \big), \quad
(f[1], \theta) \mapsto (\iota(f)[1], \theta).
\]
\end{theorem}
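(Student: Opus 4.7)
The plan is to treat this theorem as a refinement of Voronov's higher derived bracket construction, obtained by restricting from $V$ to a subalgebra, and to proceed in three steps.

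\emph{Step 1: Well-definedness on $V'[1]\oplus\mathfrak{a}$.} First I would check that every formula defining $l_k$ produces outputs in $V'[1]\oplus\mathfrak{a}$ whenever the inputs lie there. The bracket $l_2(x[1]\otimes y[1])=(-1)^{|x|}[x,y][1]$ stays in $V'[1]$ because $V'$ is a graded Lie subalgebra of $V$; the $V'[1]$-component of $l_1$ yields $-[\triangle,x][1]\in V'[1]$ by the hypothesis $[\triangle,V']\subset V'$; and all remaining structure maps land in $\mathfrak{a}$ by the presence of $\mathcal{P}$.

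\emph{Step 2: Higher Jacobi identities.} Next I would verify the $L_\infty[1]$-relations. When $V'=V$ this is exactly Voronov's theorem, proved from three ingredients: the graded Jacobi identity in $V$, the Maurer--Cartan equation $[\triangle,\triangle]=0$, and the fact that $\ker(\mathcal{P})$ is a graded Lie subalgebra (so that $\mathcal{P}$ annihilates any bracket whose two outermost entries lie in $\ker(\mathcal{P})$; the abelianness of $\mathfrak{a}$ provides the supply of such entries via $[a_1,a_2]=0$). Step~1 guarantees that every intermediate term produced in Voronov's expansion stays inside $V'$ or $\mathfrak{a}$, so the proof transfers verbatim to $V'[1]\oplus\mathfrak{a}$. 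I expect the main obstacle here to be the careful bookkeeping of Koszul signs across the $(i,n-i)$-shuffles, which is notationally heavy but conceptually identical to Voronov's original argument.

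\emph{Step 3: Transport to $V''$ and the morphism $\tilde{\iota}$.} Given a monomorphism $\iota:V''\hookrightarrow V'$ of graded Lie algebras satisfying $[\triangle,\iota(V'')]\subset\iota(V'')$, the image $\iota(V'')$ is itself a graded Lie subalgebra of $V$ closed under $[\triangle,-]$. Applying Steps~1--2 to $\iota(V'')$ endows $\iota(V'')[1]\oplus\mathfrak{a}$ with an $L_\infty[1]$-structure, which I would pull back along $\iota^{-1}$ (well-defined on $\iota(V'')$ by injectivity of $\iota$) to recover the operations $l_k'$ displayed in the statement. The map $\tilde{\iota}=(\iota[1],\mathrm{id}_{\mathfrak{a}})$ then intertwines $l_k'$ with $l_k$ strictly, because $\iota$ preserves the Lie bracket and $\mathcal{P}$ is applied externally in both algebras; hence $\tilde{\iota}$ is a strict $L_\infty[1]$-morphism, and injectivity of $\iota$ upgrades it to a monomorphism, completing the proof.
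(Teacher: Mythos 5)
Your proposal is correct. The paper offers no proof of this theorem at all --- it is explicitly presented as a summary of the derived-bracket constructions in the cited references (Voronov, Fr\'egier--Zambon, Das--Mishra, Lyu--Qi--Yang--Zhou) --- and your three-step argument (closure of $V'[1]\oplus\mathfrak{a}$ under the operations of $V[1]\oplus\mathfrak{a}$ so that the higher Jacobi identities are inherited by restriction, then transport of the structure along the isomorphism $\iota: V''\to\iota(V'')$ to obtain the $l_k'$ and the strict monomorphism $\tilde{\iota}$) is exactly the standard argument those references contain.
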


\subsection{$L_\infty[1]$-structure for relative  modified Rota-Baxter 3-Lie algebras}\label{L infty  structure for relative  modified Rota-Baxter 3-Lie algebras}In this subsection, by using the derived bracket technique, we construct an
$L_\infty[1]$-algebra whose Maurer-Cartan elements are in bijection with the set of structures of relative
modified Rota-Baxter $3$-Lie algebras of weight $\lambda$.

\begin{theorem}\label{Thm: graded Lie structure of 3-Lie complex}\cite{Rotkiewicz}
	The graded vector space $C^*_{\mathrm{3Lie}}(\mathfrak{g}, \mathfrak{g})$ equipped with the graded commutator bracket
	
	$$
	[P, Q]_{\mathrm{R}}=P \circ Q-(-1)^{p q} Q \circ P, \quad \forall P \in C^p_{\mathrm{3Lie}}(\mathfrak{g}, \mathfrak{g}), Q \in C^q_{\mathrm{3Lie}}(\mathfrak{g}, \mathfrak{g}),
	$$
	
	is a graded Lie algebra, where $P \circ Q \in C^{p+q}_{\mathrm{3Lie}}(\mathfrak{g}, \mathfrak{g})$ is defined by
	\begin{small}
	$$
	\begin{aligned}
		& (P \circ Q)\left(\mathfrak{X}_1, \cdots, \mathfrak{X}_{p+q}, x\right) \\
		= & \sum_{k=1}^p(-1)^{(k-1) q} \sum_{\sigma \in \mathbb{S}(k-1, q)}(-1)^\sigma P\left(\mathfrak{X}_{\sigma(1)}, \cdots, \mathfrak{X}_{\sigma(k-1)}, Q\left(\mathfrak{X}_{\sigma(k)}, \cdots, \mathfrak{X}_{\sigma(k+q-1)}, x_{k+q}\right) \wedge y_{k+q}, \mathfrak{X}_{k+q+1}, \cdots, \mathfrak{X}_{p+q}, x\right) \\
		& +\sum_{k=1}^p(-1)^{(k-1) q} \sum_{\sigma \in \mathbb{S}(k-1, q)}(-1)^\sigma P\left(\mathfrak{X}_{\sigma(1)}, \cdots, \mathfrak{X}_{\sigma(k-1)}, x_{k+q} \wedge Q\left(\mathfrak{X}_{\sigma(k)}, \cdots, \mathfrak{X}_{\sigma(k+q-1)}, y_{k+q}\right), \mathfrak{X}_{k+q+1}, \cdots, \mathfrak{X}_{p+q}, x\right) \\
		& +\sum_{\sigma \in \mathbb{S}(p, q)}(-1)^{p q}(-1)^\sigma P\left(\mathfrak{X}_{\sigma(1)}, \cdots, \mathfrak{X}_{\sigma(p)}, Q\left(\mathfrak{X}_{\sigma(p+1)}, \cdots, \mathfrak{X}_{\sigma(p+q-1)}, \mathfrak{X}_{\sigma(p+q)}, x\right)\right),
	\end{aligned}
	$$
	\end{small}
	for all $\mathfrak{X}_i=x_i \wedge y_i \in \wedge^2 \mathfrak{g}, i=1,2, \cdots, p+q$ and $x \in \mathfrak{g}$.
	Moreover, $\mu: \wedge^3 \mathfrak{g} \longrightarrow \mathfrak{g}$ is a 3-Lie bracket if and only if  $\mu\in C^{2}_{\mathrm{3Lie}}(\mathfrak{g}, \mathfrak{g})$ such that $[\mu,\mu]_{\mathrm{R}}=0$.
\end{theorem}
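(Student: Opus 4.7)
The plan is to establish this theorem by first showing that the insertion operation $\circ$ makes $C^*_{\mathrm{3Lie}}(\mathfrak{g},\mathfrak{g})$ into a graded pre-Lie (right-symmetric) algebra, from which both the graded skew-symmetry and the graded Jacobi identity for $[\cdot,\cdot]_{\mathrm{R}}$ follow by a standard argument. The peculiar three-summand shape of $P\circ Q$ mirrors the three ways a 3-bracket output can be placed inside a cochain on $(\wedge^2\mathfrak{g})^{\otimes(n-1)}\wedge\mathfrak{g}$: either as the first component of some fundamental pair $\mathfrak{X}_k=x_k\wedge y_k$, as its second component, or in the final slot $x$. Accordingly, the first step is to verify well-definedness, i.e., that the three summands together respect the skew-symmetry in each $\mathfrak{X}_k$ and the shuffle structure; this is essentially a check that the Leibniz splitting $Q(\dots,x_{k+q})\wedge y_{k+q}+x_{k+q}\wedge Q(\dots,y_{k+q})$ is compatible with the wedge.

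Next I would verify the graded pre-Lie identity
\[
(P\circ Q)\circ R-P\circ(Q\circ R)=(-1)^{qr}\bigl((P\circ R)\circ Q-P\circ(Q\circ R\text{ with }R\leftrightarrow Q)\bigr).
\]
The proof is by case analysis on the relative positions of the insertions of $Q$ and $R$ inside $P$. When $R$ is inserted into a slot that was newly created by $Q$, the terms recombine into $P\circ(Q\circ R)$ (this is where the Leibniz splitting is essential, since $R$ can land on either of the two components produced by $Q$). When the insertions of $Q$ and $R$ occupy disjoint slots of $P$, the corresponding contributions are symmetric under swapping $(Q,R)$ with appropriate Koszul sign, hence cancel in the difference. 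Graded skew-symmetry of $[\cdot,\cdot]_{\mathrm{R}}$ is built into its definition, and the graded Jacobi identity then follows from the standard lemma that the graded commutator of a graded pre-Lie product is a graded Lie bracket.

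For the Maurer--Cartan characterization, let $\mu\in C^2_{\mathrm{3Lie}}(\mathfrak{g},\mathfrak{g})$, so $\mu$ is a skew-symmetric trilinear map $\wedge^2\mathfrak{g}\wedge\mathfrak{g}\to\mathfrak{g}$. Since $p=q=2$ gives $(-1)^{pq}=1$, one has $[\mu,\mu]_{\mathrm{R}}=2\mu\circ\mu$. Evaluating $(\mu\circ\mu)(\mathfrak{X}_1,\mathfrak{X}_2,x)$ with $\mathfrak{X}_i=x_i\wedge y_i$ and expanding the three summands in the definition of $\circ$, I would obtain precisely
\[
[x_1,y_1,[x_2,y_2,x]]-[[x_1,y_1,x_2],y_2,x]-[x_2,[x_1,y_1,y_2],x]-[x_2,y_2,[x_1,y_1,x]],
\]
which is the obstruction to the fundamental identity. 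Hence $[\mu,\mu]_{\mathrm{R}}=0$ if and only if $\mu$ is a 3-Lie bracket.

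The main obstacle is the combinatorial bookkeeping of shuffle signs in verifying the pre-Lie identity; a direct expansion is lengthy but mechanical. A cleaner alternative, which I would adopt if the direct computation becomes unwieldy, is to realize $C^*_{\mathrm{3Lie}}(\mathfrak{g},\mathfrak{g})$ as a subspace of coderivations on a suitable cofree graded coalgebra cogenerated by $\mathfrak{g}$ (with the coproduct tailored to the $(\wedge^2\mathfrak{g})^{\otimes(n-1)}\wedge\mathfrak{g}$ arity pattern). Then the bracket $[\cdot,\cdot]_{\mathrm{R}}$ becomes the graded commutator of coderivations, for which the graded Lie axioms are automatic, and the Maurer--Cartan equation $[\mu,\mu]_{\mathrm{R}}=0$ coincides with the square-zero condition for the induced codifferential, i.e., with the fundamental identity.
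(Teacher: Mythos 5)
The paper itself offers no proof of this statement; it is quoted from Rotkiewicz's work on the cohomology ring of $n$-Lie algebras, so there is no internal argument to compare yours against. Your strategy --- show that $\circ$ is a graded pre-Lie (right-symmetric) product by sorting the terms of $(P\circ Q)\circ R-P\circ(Q\circ R)$ into ``nested'' insertions (which reassemble into $P\circ(Q\circ R)$) and ``disjoint'' insertions (which are symmetric in $Q\leftrightarrow R$ up to Koszul sign), then invoke the standard commutator lemma --- is exactly the classical Gerstenhaber/Nijenhuis--Richardson route and is sound in outline; the coderivation realization you mention as a fallback is likewise a standard clean packaging. The real content, which you correctly identify but do not carry out, is the shuffle-sign bookkeeping together with the check that the Leibniz splitting $Q(\ldots,x_{k+q})\wedge y_{k+q}+x_{k+q}\wedge Q(\ldots,y_{k+q})$ lands back in $\Hom\big((\wedge^2\mathfrak g)^{\otimes\bullet}\wedge\mathfrak g,\mathfrak g\big)$; at the level of a proposal that is acceptable.

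There is, however, one concrete error in your Maurer--Cartan paragraph. You take $\mu\in C^2_{\mathrm{3Lie}}(\mathfrak g,\mathfrak g)$, set $p=q=2$, observe $(-1)^{pq}=+1$, and conclude $[\mu,\mu]_{\mathrm R}=2\,\mu\circ\mu$. With sign $+1$ the graded commutator gives $[\mu,\mu]_{\mathrm R}=\mu\circ\mu-\mu\circ\mu=0$ identically, which would make the characterization vacuous. The resolution is that the grading entering the bracket is the shifted one: in the displayed formula for $P\circ Q$ the integer $p$ counts the number of $\mathfrak X$-inputs of $P$, i.e.\ $P\in\Hom\big((\wedge^2\mathfrak g)^{\otimes p}\wedge\mathfrak g,\mathfrak g\big)=C^{p+1}_{\mathrm{3Lie}}(\mathfrak g,\mathfrak g)$, so a candidate $3$-bracket $\mu\in C^{2}_{\mathrm{3Lie}}(\mathfrak g,\mathfrak g)$ has $p=1$, the relevant sign is $(-1)^{pq}=-1$, and only then does $[\mu,\mu]_{\mathrm R}=2\,\mu\circ\mu$ hold. (The theorem statement is itself sloppy on this point, but your proof must adopt the odd-degree convention or the whole Maurer--Cartan characterization collapses.) With that correction, your expansion of $(\mu\circ\mu)(\mathfrak X_1,\mathfrak X_2,x)$ is, up to an overall sign coming from the factor $(-1)^{pq}=-1$ in the third summand, exactly the obstruction to the fundamental identity, and the equivalence follows as you say.
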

\begin{prop}
	\label{Prop:subalgebra}
	Denote the graded vector space \( C^*_{\mathrm{3Lie\text{-}act}}(\mathfrak{h}, \mathfrak{g}) \) by
\[
\bigoplus_{n \geq 0} \bigoplus_{i_1, \ldots, i_n \in \{0,1\}} \left(
\begin{aligned}
	&\operatorname{Hom}\left(
	V_{i_1} \otimes \cdots \otimes V_{i_n} \otimes \big( \wedge^3\mathfrak{g} \oplus (\wedge^2\mathfrak{h} \otimes \mathfrak{g}) \big),
	\, \mathfrak{g}
	\right) \\
	&\quad\oplus\
	\operatorname{Hom}\left(
	V_{i_1} \otimes \cdots \otimes V_{i_n} \otimes \big( \wedge^3\mathfrak{h} \oplus (\wedge^2\mathfrak{g} \otimes \mathfrak{h}) \big),
	\, \mathfrak{h}
	\right)
\end{aligned}
\right)
\]

	where \( V_0 = \wedge^2 \mathfrak{h} \) and \( V_1 = \wedge^2 \mathfrak{g} \). Then \( C^*_{\mathrm{3Lie\text{-}act}}(\mathfrak{h}, \mathfrak{g}) \) is a subalgebra of the graded Lie algebra
	\[
	\left( C^*_{\mathrm{3Lie}}(\mathfrak{g} \oplus \mathfrak{h}, \mathfrak{g} \oplus \mathfrak{h}), [\cdot, \cdot]_{\mathrm{R}} \right).
	\]
	
	Moreover, a quadruple
	\[
	\left( \left(\mathfrak{h}, \mu\right), \left(\mathfrak{g}, \pi\right), \rho, \zeta \right)
	\]
	forms a  relative \(3\)-Lie algebra pair  if and only if \( \delta := \pi + \rho + \mu + \zeta \in (V_{r\mathrm{Pair}})^1 \) satisfies
	\[
	[\delta, \delta]_{\mathrm{R}} = 0,
	\]
	where
	\[
	\pi \in \operatorname{Hom}(\wedge^3 \mathfrak{g}, \mathfrak{g}), \quad
	\mu \in \operatorname{Hom}(\wedge^3 \mathfrak{h}, \mathfrak{h}), \quad
	\rho \in \operatorname{Hom}(\wedge^2 \mathfrak{g} \otimes \mathfrak{h}, \mathfrak{h}), \quad
	\zeta \in \operatorname{Hom}(\wedge^2 \mathfrak{h} \otimes \mathfrak{g}, \mathfrak{g}).
	\]
\end{prop}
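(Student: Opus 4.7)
The plan is to work componentwise with respect to the type decomposition of multilinear maps induced by the splitting $\mathfrak{g}\oplus\mathfrak{h}$. A cochain in $C^n_{\mathrm{3Lie}}(\mathfrak{g}\oplus\mathfrak{h},\mathfrak{g}\oplus\mathfrak{h})$ decomposes, by splitting each $\wedge^2$-argument and the output according to the direct sum, into pieces labelled by a \emph{type profile}: the type of each intermediate $\wedge^2$-slot (one of $\wedge^2\mathfrak{g}$, $\wedge^2\mathfrak{h}$, or the mixed $\mathfrak{g}\otimes\mathfrak{h}$), the type of the last slot, and the type of the output. The subspace $C^*_{\mathrm{3Lie\text{-}act}}(\mathfrak{h},\mathfrak{g})$ is precisely the span of profiles in which every intermediate $\wedge^2$-slot is pure (in $V_0=\wedge^2\mathfrak{h}$ or $V_1=\wedge^2\mathfrak{g}$) and the pair (last slot, output) lies in one of the four patterns $\wedge^3\mathfrak{g}\to\mathfrak{g}$, $\wedge^3\mathfrak{h}\to\mathfrak{h}$, $(\wedge^2\mathfrak{g}\otimes\mathfrak{h})\to\mathfrak{h}$, or $(\wedge^2\mathfrak{h}\otimes\mathfrak{g})\to\mathfrak{g}$.

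For the first claim, closure of $C^*_{\mathrm{3Lie\text{-}act}}(\mathfrak{h},\mathfrak{g})$ under $[\,,\,]_{\mathrm R}$, I would inspect the three sums defining $P\circ Q$ in Theorem~\ref{Thm: graded Lie structure of 3-Lie complex}. The first two sums insert the value of $Q$, which lies in $\mathfrak{g}$ or in $\mathfrak{h}$ by the allowed output profiles, into one factor of an intermediate $\wedge^2$-slot of $P$; by hypothesis that slot is pure, so the substitution again yields a pure slot exactly when $Q$'s output matches the type of the remaining factor, and the contributions with mismatched types simply vanish on the restricted domain. The third sum substitutes $Q$ into the last slot of $P$; the four admissible input patterns of $P$ and the four admissible output profiles of $Q$ pair up cleanly, and a routine case-check confirms that every surviving term carries a valid profile for the composite. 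Hence $P\circ Q$, and therefore $[P,Q]_{\mathrm R}$, lies in $C^*_{\mathrm{3Lie\text{-}act}}(\mathfrak{h},\mathfrak{g})$.

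For the second claim, I would expand $[\delta,\delta]_{\mathrm R}=2\,\delta\circ\delta$ and project the vanishing condition onto each profile of $(V_{r\mathrm{Pair}})^2$. Since $\delta=\pi+\rho+\mu+\zeta$ is a sum of four homogeneous pieces, $\delta\circ\delta$ splits into sixteen contributions that reorganize, by output type and last-slot type, into independent identities. The $(\pi,\pi)$-component on $\wedge^5\mathfrak{g}\to\mathfrak{g}$ recovers the fundamental identity for $\pi$, and likewise $(\mu,\mu)$ recovers that for $\mu$. The mixed components with output in $\mathfrak{h}$ and last slot in $\wedge^2\mathfrak{g}\otimes\mathfrak{h}$ (built from $\pi,\rho$) reproduce the two representation axioms for $\rho$, and the symmetric components (built from $\mu,\zeta$) produce those for $\zeta$. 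The remaining cross-components, in which $\rho$ and $\zeta$ appear together, yield precisely the action compatibilities $\rho(x,y)u\in\mathcal{C}(\mathfrak{h})$, $\rho(x,y)\{u,v,w\}=0$ and their $\zeta$-analogues. Matching each of these projected equations to the clauses in the definition of a relative $3$-Lie algebra pair establishes the equivalence.

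The main obstacle is the bookkeeping in part one: one must verify not only that permissible profiles compose to permissible profiles, but also that no mixed intermediate $\wedge^2$-slot $\mathfrak{g}\otimes\mathfrak{h}$ can appear in the composition when both factors lie in the subalgebra, which requires systematically matching the output types of $Q$ with the input types of $P$ across all four admissible patterns. Once this combinatorial check is in place, part two reduces to a direct, if lengthy, identification of the type-components of $[\delta,\delta]_{\mathrm R}=0$ with the defining axioms of a relative $3$-Lie algebra pair.
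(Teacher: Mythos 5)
Your proposal is correct and follows essentially the same route as the paper: decompose cochains on $\mathfrak{g}\oplus\mathfrak{h}$ by the types of their slots, check closure of the admissible profiles under the composition $\circ$ defining $[\cdot,\cdot]_{\mathrm R}$, and then read off the axioms of a relative $3$-Lie algebra pair from the type-components of $[\delta,\delta]_{\mathrm R}=0$. If anything, your bookkeeping is more explicit than the paper's, which only records the components $[\pi,\pi]_{\mathrm R}=0$, $[\mu,\mu]_{\mathrm R}=0$, $\rho\circ\rho=\rho\circ\pi$, $\zeta\circ\zeta=\zeta\circ\mu$ and leaves the cross-terms producing the action conditions $\rho(x,y)u\in\mathcal C(\mathfrak{h})$, $\rho(x,y)\{u,v,w\}=0$ (and their $\zeta$-analogues) implicit.
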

\begin{proof}
	Denote $$\bigoplus_{n \geq 0} \bigoplus_{i_1, \ldots, i_n \in \{0,1\}} \left(
	\operatorname{Hom}(V_{i_1} \otimes \cdots \otimes V_{i_n} \otimes \left( (\wedge^3\mathfrak{g})\oplus (\wedge^2\mathfrak{h}\otimes\mathfrak{g})\right) , \mathfrak{g})
	\right)$$ by $V_{\mathfrak{g}}$ and $$ \bigoplus_{n \geq 0} \bigoplus_{i_1, \ldots, i_n \in \{0,1\}} \left(
	\operatorname{Hom}(V_{i_1} \otimes \cdots \otimes V_{i_n} \otimes \left( (\wedge^3\mathfrak{h})\oplus (\wedge^2\mathfrak{g}\otimes\mathfrak{h})\right) , \mathfrak{h})
	\right)$$ by $V_{\mathfrak{h}}$, respectively.  For each $P\in V_{\mathfrak{g}}$, $Q\in V_{\mathfrak{h}} $ and $R\in V_{r\mathrm{Pair}}$, we can show that $P \circ R\in V_{\mathfrak{g}}$ and $Q\circ R\in V_{\mathfrak{h}}$. Thus,  $V_{r\mathrm{Pair}}$ is a subalgebra of $ C^*_{\mathrm{3Lie}}(\mathfrak{g}\oplus\mathfrak{h}, \mathfrak{g}\oplus\mathfrak{h})$.

	For all $\delta\in (V_{r\mathrm{Pair}})^1$,  can decompose
	$\delta$ as $\delta=\pi+\rho+ \mu+\zeta $, where  $\pi\in \operatorname{Hom}(\wedge^3\mathfrak{g},\mathfrak{g})$, $\mu\in \operatorname{Hom}(\wedge^3\mathfrak{h},\mathfrak{h})$, $\rho\in \operatorname{Hom}(\wedge^2 \mathfrak{g}\otimes \mathfrak{h}  , \mathfrak{h})$ and  $\zeta\in \operatorname{Hom}(\wedge^2 \mathfrak{h}\otimes \mathfrak{g}  , \mathfrak{g})$.  Since $[\delta,\delta]_{\mathrm{R}}=0$, we have \[[\mu,\mu]_{\mathrm{R}}=0,\ [\pi,\pi]_{\mathrm{R}}=0,\ \rho\circ \rho=\rho\circ\pi \text{ and } \zeta\circ \zeta=\zeta\circ\mu.\] By Theorem~\ref{Thm: graded Lie structure of 3-Lie complex}, $\mu$ and $\pi$ define 3-Lie brackets on $\mathfrak{h}$ and $\mathfrak{g}$, respectively. $\rho\circ \rho=\rho\circ\pi$ is equivalent to that $\rho$ and $\pi$ satisfy the Equations~(\ref{Eq: 3-Lie representation1})(\ref{Eq: 3-Lie representation2}). Therefore,  $ \rho: \wedge^2 \mathfrak{g}\otimes \mathfrak{h} \rightarrow  \mathfrak{h}$ is action of   $\left(\mathfrak{g},\pi\right)$ on   $\left(\mathfrak{h},\mu\right)$. Similarly, $ \mu: \wedge^2 \mathfrak{h}\otimes \mathfrak{g} \rightarrow  \mathfrak{g}$ is action of $\left(\mathfrak{h},\mu\right)$   on   $\left(\mathfrak{g},\pi\right)$.
	
	Vice versa.
\end{proof}

By Theorem~\ref{v-th} and Proposition~\ref{Prop:subalgebra}, we can construct the following $L_{\infty}$-algebra.
\begin{prop}\label{Prop: L
		infinty on relative modified RB Lie alge}
	Let $\mathfrak{g}$ and $\mathfrak{h}$ are two vector space. Then we have a $V$-data $(V, \mathfrak{a}, \mathcal{P}, \Delta)$ as follows:
	\begin{itemize}
		\item [(a)]the graded Lie algebra $(V,[\cdot, \cdot])$ is given by $\left(C^*_{\mathrm{3Lie}}(\mathfrak{g} \oplus \mathfrak{h}, \mathfrak{g} \oplus \mathfrak{h}),[\cdot, \cdot]_{\mathrm{R}}\right)$;
		\item [(b)] the abelian graded Lie subalgebra $\mathfrak{a}$ is given by
		
		$$
		\mathfrak{a}=C^*_{\mathrm{3Lie}}(\mathfrak{h}, \mathfrak{g})=\oplus_{n \geq 0} C^n_{\mathrm{3Lie}}(\mathfrak{h}, \mathfrak{g})=\bigoplus_{n \geq 0} \operatorname{Hom}\left(\underbrace{\wedge^2 \mathfrak{h} \otimes \cdots \otimes \wedge^2 \mathfrak{h}}_{n \text{ times}} \wedge \mathfrak{h}, \mathfrak{g}\right);
		$$
		\item [(c)] $\mathcal{P}: L \rightarrow L$ is the projection onto the subspace $\mathfrak{a}$;
		\item [(d)] $\Delta=0\in \mathrm{ker}(\mathcal{P})^1$.

	\end{itemize}
	Set $V_{r\mathrm{Pair}}=C^*_{\mathrm{3Lie-act}}(\mathfrak{h},\mathfrak{g})$,  we obtain  an $L_{\infty}$-algebra $(V_{r\mathrm{Pair}}[1] \oplus \mathfrak{a}, \{l_k\}_{k\geq 1} )$, where
	
	$$
	\begin{aligned}
		&	l^{m\mathrm{RB}}_2(f[1]\otimes g[1])	=[f,g]_{\mathrm{R}}[1]\\
		&l^{m\mathrm{RB}}_k\left(f[1]\otimes \theta_1\otimes \ldots\otimes \theta_{k-1}\right)=\mathcal{P}\left[\cdots\left[\left[f, \theta_1\right]_{\mathrm{R}}, \theta_2\right]_{\mathrm{R}}, \ldots, \theta_{k-1}\right]_{\mathrm{R}}, \text { for } k \geq 2
	\end{aligned}
	$$
	
	for all $\theta_1,\ldots,\theta_{k-1}\in \mathfrak{a}$ and $f,g\in V_{r\mathrm{Pair}}$.

\end{prop}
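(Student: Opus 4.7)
The plan is to verify that the quadruple $(V, \mathfrak{a}, \mathcal{P}, \Delta)$ satisfies all four axioms of a $V$-data and then invoke Theorem~\ref{v-th} with $V'=V_{r\mathrm{Pair}}$. The graded Lie algebra structure on $V=C^*_{\mathrm{3Lie}}(\mathfrak{g}\oplus\mathfrak{h},\mathfrak{g}\oplus\mathfrak{h})$ is already provided by Theorem~\ref{Thm: graded Lie structure of 3-Lie complex}. That $V_{r\mathrm{Pair}}\subset V$ is a graded Lie subalgebra is precisely the content of Proposition~\ref{Prop:subalgebra}, and since $\Delta=0$ the invariance condition $[\Delta, V_{r\mathrm{Pair}}]\subset V_{r\mathrm{Pair}}$ is trivial; likewise $[\Delta,\Delta]=0$ holds automatically. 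Thus all that must really be checked are the two conditions concerning $\mathfrak{a}$ and $\mathcal{P}$.

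For the abelianness of $\mathfrak{a}$, I would argue as follows. Any $f,g\in\mathfrak{a}$, extended by zero to cochains on $\mathfrak{g}\oplus\mathfrak{h}$, only give nonzero values when all their inputs lie in $\mathfrak{h}$, and they output into $\mathfrak{g}$. In the composition formula of Theorem~\ref{Thm: graded Lie structure of 3-Lie complex}, $g$'s output (a $\mathfrak{g}$-element) would need to be inserted into a wedge slot of $f$ that accepts only $\mathfrak{h}$-arguments; since $\mathfrak{g}$ and $\mathfrak{h}$ are disjoint summands, such an insertion vanishes, so $f\circ g=0$ and consequently $[f,g]_{\mathrm{R}}=0$.

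For the projection $\mathcal{P}$, I would take the natural direct-sum projection onto the summand $\mathfrak{a}$. The delicate point is showing $\ker\mathcal{P}$ is closed under $[\cdot,\cdot]_{\mathrm{R}}$. Given $f,g\in\ker\mathcal{P}$, I would analyze when a summand of $f\circ g$ could land in $\mathfrak{a}$: this would require the output of $f$ to lie in $\mathfrak{g}$, every input of $g$ to lie in $\mathfrak{h}$, and every remaining input of $f$ to lie in $\mathfrak{h}$. A short case analysis, split according to whether the slot of $f$ receiving $g$'s output is a $\mathfrak{g}$-slot or an $\mathfrak{h}$-slot, forces either $f\in\mathfrak{a}$ (contradicting $f\in\ker\mathcal{P}$) or the insertion to be type-incompatible and hence vanishing. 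I expect this bookkeeping to be the most tedious step, since the composition in Theorem~\ref{Thm: graded Lie structure of 3-Lie complex} involves many shuffled terms; the main obstacle is simply to track the $\mathfrak{g}/\mathfrak{h}$-types across all these terms carefully.

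Finally, with the $V$-data verified, Theorem~\ref{v-th} applies with $V'=V_{r\mathrm{Pair}}$ and outputs the announced $L_\infty[1]$-structure on $V_{r\mathrm{Pair}}[1]\oplus\mathfrak{a}$. Specializing the formulas of that theorem to $\Delta=0$, all brackets involving $[\Delta,-]$ collapse: $l_1\equiv 0$ and $l_k(\theta_1\otimes\cdots\otimes\theta_k)=0$ for $\theta_i\in\mathfrak{a}$, leaving precisely the $l_2^{m\mathrm{RB}}$ and $l_k^{m\mathrm{RB}}$ displayed in the statement, which completes the proof.
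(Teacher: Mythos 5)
Your proposal is correct and follows essentially the same route as the paper, which simply invokes Theorem~\ref{v-th} together with Proposition~\ref{Prop:subalgebra} without writing out a proof. The extra checks you supply (abelianness of $\mathfrak{a}$ via the $\mathfrak{g}$/$\mathfrak{h}$-type mismatch in the composition, and closedness of $\ker\mathcal{P}$ under $[\cdot,\cdot]_{\mathrm{R}}$) are exactly the routine verifications the paper leaves implicit, and your type-tracking argument for them is sound.
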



\begin{theorem}\label{Thm: MC of modified relative RB}
	With all the above notations.     Suppose  there are maps $\pi\in \operatorname{Hom}(\wedge^3\mathfrak{g},\mathfrak{g})$, $\mu\in \operatorname{Hom}(\wedge^3\mathfrak{h},\mathfrak{h})$, $\rho\in \operatorname{Hom}(\wedge^2 \mathfrak{g}\otimes \mathfrak{h}  , \mathfrak{h})$, $\zeta\in \operatorname{Hom}(\wedge^2 \mathfrak{h}\otimes \mathfrak{g}  , \mathfrak{g})$, $T\in \operatorname{Hom}(  \mathfrak{h}  , \mathfrak{g})$ and $\delta=\pi+\rho+\lambda(\mu+\zeta)$ with nonzero $\lambda$.
	
	Then $(\delta[1], T)\in (V_{r\mathrm{Pair}}[1] \oplus \mathfrak{a})^0$ is a  Maurer-Cartan element in $L_\infty[1]$-algebra $(V_{r\mathrm{Pair}}[1] \oplus \mathfrak{a}, \{l^{m\mathrm{RB}}_k\}_{k\geq 1} )$ if and only  $\left( \left(\mathfrak{h},\pi\right),\left(\mathfrak{g},\mu\right),\rho,\zeta,T\right) $  is   a relative modified Rota-Baxter $3$-Lie algebra structure.
\end{theorem}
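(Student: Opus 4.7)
My plan is to unfold the Maurer--Cartan condition into its two natural components, match the first against the bracket condition of Proposition~\ref{Prop:subalgebra}, and match the second against the defining equation of a relative modified Rota--Baxter operator via an iterated derived-bracket computation.

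First I would write out the MC equation $\sum_{k\ge 1}\tfrac{1}{k!}l_k^{m\mathrm{RB}}((\delta[1],T)^{\otimes k})=0$. Among the operations listed in Proposition~\ref{Prop: L infinty on relative modified RB Lie alge}, only $l_2(\delta[1]\otimes\delta[1])=[\delta,\delta]_{\mathrm{R}}[1]$ and $l_k(\delta[1]\otimes T^{\otimes(k-1)})=\mathcal{P}[\cdots[[\delta,T]_{\mathrm{R}},T]_{\mathrm{R}},\ldots,T]_{\mathrm{R}}$ survive on tensor powers of $(\delta[1],T)$; moreover $l_1((\delta[1],T))=(0,\mathcal{P}(\delta))=0$ since no summand of $\delta$ lies in $\operatorname{Hom}(\mathfrak{h},\mathfrak{g})$. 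Expanding by multilinearity and graded symmetry (all entries have degree zero), and using the combinatorial factor $\binom{k}{1}/k!=1/(k-1)!$, the MC equation splits into the two independent identities
\begin{align*}
[\delta,\delta]_{\mathrm{R}}=0
\qquad\text{and}\qquad
\sum_{n\ge 1}\tfrac{1}{n!}\,\mathcal{P}\bigl(\mathrm{ad}_T^{\,n}(\delta)\bigr)=0,
\end{align*}
where $\mathrm{ad}_T(X)=[X,T]_{\mathrm{R}}$.

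Second, I would read off the relative $3$-Lie pair condition from the first identity. Writing $\delta=\pi+\rho+\lambda(\mu+\zeta)$ and expanding $[\delta,\delta]_{\mathrm{R}}$ by bilinearity yields pure-type brackets that, by the source/target analysis in the proof of Proposition~\ref{Prop:subalgebra}, land in mutually disjoint graded summands of $C^*_{\mathrm{3Lie}}(\mathfrak{g}\oplus\mathfrak{h},\mathfrak{g}\oplus\mathfrak{h})$. Since $\lambda\ne 0$, the vanishing of each summand is equivalent to $[\pi+\rho+\mu+\zeta,\pi+\rho+\mu+\zeta]_{\mathrm{R}}=0$, which by Proposition~\ref{Prop:subalgebra} is exactly the condition that $((\mathfrak{h},\mu),(\mathfrak{g},\pi),\rho,\zeta)$ is a relative $3$-Lie algebra pair.

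The substantive work is in the second identity. Because $T$ has arity one, each bracket with $T$ removes one $\mathfrak{g}$-slot of the host cochain; after projection onto $C^2_{\mathrm{3Lie}}(\mathfrak{h},\mathfrak{g})=\operatorname{Hom}(\wedge^2\mathfrak{h}\otimes\mathfrak{h},\mathfrak{g})$ only $n\le 3$ can contribute and the series terminates. I would then evaluate $\mathcal{P}(\mathrm{ad}_T^{\,n}(\delta))$ on $(u,v,w)\in\wedge^2\mathfrak{h}\otimes\mathfrak{h}$ term by term using the formula of Theorem~\ref{Thm: graded Lie structure of 3-Lie complex}: the $\pi$-piece contributes the single-, double-, and triple-$T$ substitutions culminating in $[T(u),T(v),T(w)]$ at $n=3$; the $\rho$-piece contributes the correction $-T(\rho(T(\cdot),T(\cdot))\cdot+\mathrm{cyc.})$; and the $\lambda\mu$- and $\lambda\zeta$-pieces produce, respectively, $-\lambda T\{u,v,w\}$ and $+\lambda\zeta(u,v)T(w)+\mathrm{cyc.}$ Summing these reproduces exactly the relative modified Rota--Baxter identity of weight $\lambda$. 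The main obstacle will be the bookkeeping in this last step: matching the many sub-terms of the Nijenhuis--Richardson-type formula in Theorem~\ref{Thm: graded Lie structure of 3-Lie complex} against the three $\rho$-terms, the three $\lambda\zeta$-terms, and the $\lambda\mu$-term appearing in the definition, and verifying that the specific scaling $\delta=\pi+\rho+\lambda(\mu+\zeta)$ is precisely what produces the correct signs $+1$, $+\lambda$, $-\lambda$ in front of them.
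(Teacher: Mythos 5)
Your proposal is correct and follows essentially the same route as the paper's proof: the Maurer--Cartan equation splits into the component $[\delta,\delta]_{\mathrm{R}}=0$, handled by Proposition~\ref{Prop:subalgebra}, and the component $\mathcal{P}[\delta,T]_{\mathrm{R}}+\tfrac{1}{3!}\mathcal{P}[[[\delta,T]_{\mathrm{R}},T]_{\mathrm{R}},T]_{\mathrm{R}}=0$, which after the derived-bracket evaluation (only $\pi$ and $\rho$ surviving in the triple bracket, only $\lambda\mu$ and $\lambda\zeta$ in the single bracket) is exactly the relative modified Rota--Baxter identity. Your extra observation that the nonvanishing of $\lambda$ is needed to pass from $[\pi+\rho+\lambda(\mu+\zeta),\,\pi+\rho+\lambda(\mu+\zeta)]_{\mathrm{R}}=0$ to the unscaled condition of Proposition~\ref{Prop:subalgebra} is a point the paper glosses over, and is a welcome refinement.
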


\begin{proof}
	Let $(\delta[1], T)\in (V_{r\mathrm{Pair}})^0$. Then $\delta$ can be decomposed
	as $\delta=\pi+\rho+\lambda(\mu+\zeta) $ and we have that
	\begin{align*}
		&	l^{m\mathrm{RB}}_2\big( (\delta[1]\otimes T), (\delta [1]\otimes T)  \big) =\left( [\delta, \delta ]_{\mathrm{R}} [1], 2\mathcal{P}[\delta,T]_{\mathrm{R}} \right) ,\\
		&	l^{m\mathrm{RB}}_2\big( (\delta[1],T)\otimes (\delta [1], T)  \big)\\
		&=\left(0, 24\mathcal{P}[[[\delta,T]_{\mathrm{R}},T]_{\mathrm{R}},T]_{\mathrm{R}}\right),
	\end{align*}
	where for all $(u,v,w)\in \wedge^2 \mathfrak{h}$
	\begin{align*}
		&\mathcal{P}[\delta,T]_{\mathrm{R}} (u,v,w)\\
		&	=-\lambda T\mu(u,v,w)+\lambda\left( \zeta(u,v)Tw+\zeta(v,w)Tu+\zeta(w,u)Tv\right) , \\
		& \mathcal{P}[[[\delta,T]_{\mathrm{R}},T]_{\mathrm{R}},T]_{\mathrm{R}}(u,v,w)\\
		&=  \pi(Tu,Tv,Tw)-T\left( \rho(Tu,Tv)w+\rho(Tv,Tw)u+\rho(Tw,Tu)v\right).
	\end{align*}
	
	Then, the element $(\delta[1], T)\in (V_{r\mathrm{Pair}})^0$ is  a  Maurer-Cartan element in  $(V_{r\mathrm{Pair}}[1] \oplus \mathfrak{a}, \{l^{m\mathrm{RB}}_k\}_{k\geq 1} )$, if and only if
	\begin{align*}
		& \sum_{k=1}^\infty \frac{1}{k !} ~\! l^{m\mathrm{RB}}_k \Big((\delta[1], T)\otimes \cdots\otimes(\delta[1], T)\Big)\\
		&=\frac{1}{2!}l^{m\mathrm{RB}}_2\big( (\delta[1], T)\otimes (\delta [1]\otimes T)+\frac{1}{4!}	l^{m\mathrm{RB}}_4\big( (\delta[1], T)\otimes (\delta [1], T)\otimes(\delta [1], T), (\delta [1], T) \big)\\
		&=0,
	\end{align*}
	that is,
	\begin{align}\label{Eq: MC element of relative RBm}
		\Big(\frac{1}{2!} [\delta, \delta ]_{\mathrm{R}}[1] ,\frac{1}{2!}\mathcal{P}[\delta,T]_{\mathrm{R}}+ \frac{1}{4!} \mathcal{P}[[[\delta,T]_{\mathrm{R}},T]_{\mathrm{R}},T]_{\mathrm{R}}\Big) =0.
	\end{align}
	By the first component of Equation~\ref{Eq: MC element of relative RBm}, we have
	\begin{align*}
		[\delta, \delta ]_{\mathrm{R}}=0.
	\end{align*}
	Thus, by Proposition~\ref{Prop:subalgebra}, this implies that the quadruple $\left( \left(\mathfrak{h},\mu\right),\left(\mathfrak{g},\pi\right),\rho,\zeta \right) $ defines a relative $3$-Lie algebra pair  structure.
	
	From the section component, we obtain
	\begin{align*}
		\pi(Tu,Tv,Tw)=&T\left( \rho(Tu,Tv)w+\rho(Tv,Tw)u+\rho(Tw,Tu)v\right) \\
		&\lambda\left( -\zeta(u,v)Tw-\zeta(v,w)Tu-\zeta(w,u)Tv+T\mu(u,v,w)\right) ,
	\end{align*}
	which shows that $T$ is a relative modified Rota-Baxter operator of weight $\lambda$ on the relative $3$-Lie algebra pair   $\left( \left(\mathfrak{h},\mu\right),\left(\mathfrak{g},\pi\right),\rho,\zeta \right) $.
	
	Therefore,  the element   $(\delta[1], T)\in (V_{r\mathrm{Pair}})^0$ is  a  Maurer-Cartan element if and only if  $( \left(\mathfrak{h},\pi\right),\left(\mathfrak{g},\mu\right),\\ \rho,\zeta, T) $  is   defines a relative modified Rota-Baxter $3$-Lie algebra structure.
\end{proof}

\subsection{$L_\infty[1]$-structure for (absolute) modified Rota-Baxter 3-Lie algebras}\label{L infty  structure for  modified Rota-Baxter 3-Lie algebras}
In this subsection, we will construct an $L_\infty[1]$-structure  whose Maurer-Cartan elements are in bijection with the set of structures of  modified Rota-Baxter $3$-Lie algebras of weight~$\lambda$. The cohomology restricted to the operator space induced by the Maurer-Cartan element of this $L_\infty[1]$-algebra constructed via the twisting procedure in Proposition~\ref{Twisting procedure} coincide with the cohomologies associated to the modified Rota-Baxter operator, as described in Theorem~\ref{Thm: cohomology of modified RB 3 Lie}.


Let $\mathfrak{g}$ be a vector space. Consider another copy of $\mathfrak{g}$, denoted by $\mathfrak{g}'$, and set $\mathfrak{h} := \mathfrak{g}'$. Define the following cochain complexes:
\[
L := C^*_{\mathrm{3Lie}}(\mathfrak{g} \oplus \mathfrak{g}',\, \mathfrak{g} \oplus \mathfrak{g}'), \qquad
V_{r\mathrm{Pair}} := C^*_{\mathrm{3Lie\text{-}act}}(\mathfrak{g}',\, \mathfrak{g}),
\]
\[
\mathfrak{a} := C^*_{\mathrm{3Lie}}(\mathfrak{g}',\, \mathfrak{g})
= \bigoplus_{n \geq 0} \operatorname{Hom}\left(
\underbrace{\wedge^2 \mathfrak{g}' \otimes \cdots \otimes \wedge^2 \mathfrak{g}'}_{n \text{ times}} \wedge \mathfrak{g}',\, \mathfrak{g}
\right).
\]

As shown in Subsection~\ref{L infty  structure for relative  modified Rota-Baxter 3-Lie algebras}, the graded vector space
\[
V_{r\mathrm{Pair}}[1] \oplus \mathfrak{a}
\]
admits an $L_\infty[1]$-algebra structure, with higher operations denoted by $\{l^{m\mathrm{RB}}_k\}_{k \geq 1}$. More precisely, we have the decomposition
\[
V_{r\mathrm{Pair}} = \mathfrak{V}_1 \oplus \mathfrak{V}_2 \oplus \mathfrak{V}_3 \oplus \mathfrak{V}_4,
\]
where:
\begin{align*}
	\mathfrak{V}_1 &= \bigoplus_{n \geq 0} \mathfrak{V}_1(n), \quad
	\mathfrak{V}_1(n+1) = \bigoplus_{i_1, \ldots, i_n \in \{0,1\}}
	\operatorname{Hom}\left(V_{i_1} \otimes \cdots \otimes V_{i_n} \otimes \wedge^3\mathfrak{g},\, \mathfrak{g} \right), \\
	\mathfrak{V}_2 &= \bigoplus_{n \geq 0} \mathfrak{V}_2(n), \quad
	\mathfrak{V}_2(n+1) = \bigoplus_{i_1, \ldots, i_n \in \{0,1\}}
	\operatorname{Hom}\left(V_{i_1} \otimes \cdots \otimes V_{i_n} \otimes \wedge^2\mathfrak{g}' \otimes \mathfrak{g},\, \mathfrak{g} \right), \\
	\mathfrak{V}_3 &= \bigoplus_{n \geq 0} \mathfrak{V}_3(n), \quad
	\mathfrak{V}_3(n+1) = \bigoplus_{i_1, \ldots, i_n \in \{0,1\}}
	\operatorname{Hom}\left(V_{i_1} \otimes \cdots \otimes V_{i_n} \otimes \wedge^3\mathfrak{g}',\, \mathfrak{g}' \right), \\
	\mathfrak{V}_4 &= \bigoplus_{n \geq 0} \mathfrak{V}_4(n), \quad
	\mathfrak{V}_4(n+1) = \bigoplus_{i_1, \ldots, i_n \in \{0,1\}}
	\operatorname{Hom}\left(V_{i_1} \otimes \cdots \otimes V_{i_n} \otimes \wedge^2\mathfrak{g} \otimes \mathfrak{g}',\, \mathfrak{g}' \right),
\end{align*}
with $V_0 := \wedge^2\mathfrak{g}'$ and $V_1 := \wedge^2\mathfrak{g}$.


Denote
\[
V_{\mathrm{Pair}} :=\bigoplus_{n \geq 0}V_{\mathrm{Pair}}(n) \text{ and } V_{\mathrm{Pair}}(n):= \operatorname{Hom}\left(\underbrace{\wedge^2 \mathfrak{g} \otimes \cdots \otimes \wedge^2 \mathfrak{g}}_{n \text{ times}} \wedge \mathfrak{g}, \mathfrak{g}\right).
\]

Consider the embedding map of graded spaces, defined for each $n \geq 1$ by
\begin{align*}
	\nu\colon V_{\mathrm{Pair}}(n)
	&\longrightarrow V_{r\mathrm{Pair}}(n) = \mathfrak{V}_1(n) \oplus \mathfrak{V}_2(n) \oplus \mathfrak{V}_3(n) \oplus \mathfrak{V}_4(n),  \\
	f
	&\longmapsto \sum_{i_1,\ldots,i_{n-1}, i_{n}, j \in \{0,1\}} f_{i_1,\ldots,i_{n-1}, i_{n}}^j,
\end{align*}
where the components are defined as follows:

For $x_1 \wedge y_1 \in V_{i_1}, \ldots, x_{n-1} \wedge y_{n-1} \in V_{i_{n-1}}$ and:
\begin{itemize}
	\item [(i)]$x, y, z \in \mathfrak{g}$, define $f_{i_1,\ldots,i_{n-1},1}^1 \in \mathfrak{V}_1(n)$ by
	\[
	f_{i_1,\ldots,i_{n-1},1}^1(x_1 \wedge y_1 \otimes \cdots \otimes x_n \wedge y_n \otimes x \wedge y \wedge z) := f(x_1 \wedge y_1 \otimes \cdots \otimes x_n \wedge y_n \otimes x \wedge y \wedge z),
	\]
	\item [(ii)]$x, y \in \mathfrak{g}'$ and $z \in \mathfrak{g}$, define $f_{i_1,\ldots,i_{n-1},0}^1 \in \mathfrak{V}_2(n)$ by
	\[
	f_{i_1,\ldots,i_{n-1},0}^1(x_1 \wedge y_1 \otimes \cdots \otimes x_n \wedge y_n \otimes x \wedge y \otimes z) := f(x_1 \wedge y_1 \otimes \cdots \otimes x_n \wedge y_n \otimes x \wedge y \wedge z),
	\]
	\item [(iii)]$x, y, z \in \mathfrak{g}'$, define $f_{i_1,\ldots,i_{n-1},0}^0 \in \mathfrak{V}_3(n)$ by
	\[
	f_{i_1,\ldots,i_{n-1},0}^0(x_1 \wedge y_1 \otimes \cdots \otimes x_n \wedge y_n \otimes x \wedge y \wedge z) := f(x_1 \wedge y_1 \otimes \cdots \otimes x_n \wedge y_n \otimes x \wedge y \wedge z),
	\]
	\item [(iv)]$x, y \in \mathfrak{g}$ and $z \in \mathfrak{g}'$, define $f_{i_1,\ldots,i_{n-1},1}^0 \in \mathfrak{V}_4(n)$ by
	\[
	f_{i_1,\ldots,i_{n-1},1}^0(x_1 \wedge y_1 \otimes \cdots \otimes x_n \wedge y_n \otimes x \wedge y \otimes z) := f(x_1 \wedge y_1 \otimes \cdots \otimes x_n \wedge y_n \otimes x \wedge y \wedge z).
	\]
\end{itemize}

	\begin{prop}\label{Prop: monomorphism of graded Lie alg}
The embedding map 	$\nu	 \colon V_{\mathrm{Pair}}
 \rightarrow V_{r\mathrm{Pair}}$  is a monomorphism of graded Lie algebra.
	\end{prop}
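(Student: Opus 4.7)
I would begin by noting that the injectivity of $\nu$ is immediate from its construction: the component $f^{1}_{1,\ldots,1}$ of $\nu(f)$ coincides with $f$ itself under the natural identifications, so $\nu(f)=0$ forces $f=0$.

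For the compatibility with brackets, both graded Lie structures are graded commutators of the Nijenhuis--Richardson circle product $\circ$ described in Theorem~\ref{Thm: graded Lie structure of 3-Lie complex}, and $\nu$ is linear. Hence I would reduce the problem to the stronger identity
\[
\nu(f)\circ\nu(g)=\nu(f\circ g)\qquad\text{for all } f,g\in V_{\mathrm{Pair}},
\]
from which $[\nu(f),\nu(g)]_{\mathrm{R}}=\nu([f,g]_{\mathrm{R}})$ follows by antisymmetrization. The conceptual principle is that $\nu$ is an \emph{isotypic extension}: under the identification $\mathfrak{g}'\cong\mathfrak{g}$, the cochain $\nu(f)$ applies $f$ to the underlying vectors after forgetting the $\mathfrak{g}/\mathfrak{g}'$ labels on its inputs, and places the result in the copy dictated by the type of the terminal $\mathfrak{g}$-argument.

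My approach to the circle-product identity is a term-by-term comparison. I would fix a component of $V_{r\mathrm{Pair}}(p+q)$ determined by a choice of input types $i_1,\ldots,i_{p+q-1}\in\{0,1\}$ together with a terminal $\wedge^3$- or $\wedge^2\otimes\mathfrak{g}$-type, and expand $\nu(f)\circ\nu(g)$ on this component using the three sums in Theorem~\ref{Thm: graded Lie structure of 3-Lie complex}. In each term, the inner evaluation $\nu(g)(X_{\sigma(k)},\ldots,X_{\sigma(k+q-1)},z)$ produces an output whose type agrees with the type of $z$; when this output is wedged with the partner variable from $X_{k+q}$ and reinserted as the $k$-th argument of $\nu(f)$, the resulting slot-type matches what $\nu(f)$ accepts. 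The outer call is then well-defined and, by the defining formula of $\nu$, equals the evaluation of $f$ on the type-forgotten data. Summing over shuffles and positions reproduces exactly the corresponding component of $\nu(f\circ g)$; the same argument applies uniformly across the four summands $\mathfrak{V}_1,\mathfrak{V}_2,\mathfrak{V}_3,\mathfrak{V}_4$.

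The hard part will be purely organizational: the bookkeeping of Koszul signs has to be carried simultaneously with a type trace, and the non-trivial consistency statement is that no "mixed" wedge $x\wedge y$ with $x\in\mathfrak{g}$, $y\in\mathfrak{g}'$ ever occurs (such elements lie outside $V_{r\mathrm{Pair}}$ by construction). This consistency is automatic: whenever $\nu(g)(\ldots,z)$ replaces one component of $x_{k+q}\wedge y_{k+q}$, its output lies in the same copy as its partner, both being dictated by the shared type $i_{k+q}$. Once this check is recorded, the three sums on each side of $\nu(f)\circ\nu(g)=\nu(f\circ g)$ match term by term, yielding the desired monomorphism of graded Lie algebras.
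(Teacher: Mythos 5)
Your proposal is correct and follows essentially the same route as the paper: a component-wise expansion of the circle product over the type labels $i_1,\ldots,i_{m+n-1}\in\{0,1\}$, checking that each resulting term lands in the appropriate summand $\mathfrak{V}_1,\ldots,\mathfrak{V}_4$ and matches the corresponding component of $\nu(f\circ g)$, from which $[\nu(f),\nu(g)]_{\mathrm{R}}=\nu([f,g]_{\mathrm{R}})$ follows. Your explicit remarks on injectivity and on why no mixed wedge $x\wedge y$ with $x\in\mathfrak{g}$, $y\in\mathfrak{g}'$ can arise are points the paper leaves implicit, but they do not change the argument.
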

	
	\begin{proof}
	For \(f \in V_{\mathrm{Pair}}(n)\) and \(g \in V_{\mathrm{Pair}}(m)\), we compute:
	\begin{align*}
		[\nu(f), \nu(g)]_{\mathrm{R}}
		&= \left[ \sum_{i_1,\ldots,i_n,\, i \in \{0,1\}} f_{i_1,\ldots,i_n}^i,\
		\sum_{j_1,\ldots,j_m,\, j \in \{0,1\}} g_{j_1,\ldots,j_m}^j \right]_{\mathrm{R}} \\
		&= \sum_{i_1,\ldots,i_{m+n},\, i, j \in \{0,1\}}
		\left( f_{i_1,\ldots,i_n}^i \circ g_{i_{n+1},\ldots,i_{m+n}}^j
		- (-1)^{mn} g_{i_1,\ldots,i_m}^i \circ f_{i_{m+1},\ldots,i_{m+n}}^j \right),
	\end{align*}
	where, for each \(i_1, \ldots, i_{m+n-1} \in \{0,1\}\), the components lie in the following spaces:
	\begin{small}
\begin{align*}
		\sum_{i \in \{0,1\}} \Big(
		&f_{i_1,\ldots,i_n}^i \circ g_{i_{n+1},\ldots,i_{m+n-1},1}^1
		- (-1)^{mn} g_{i_1,\ldots,i_m}^i \circ f_{i_{m+1},\ldots,i_{m+n-1},1}^1 \Big)
		\in \operatorname{Hom}\left( V_{i_1} \otimes \cdots \otimes V_{i_{m+n-1}} \otimes \wedge^3 \mathfrak{g},\, \mathfrak{g} \right), \\
		\sum_{i \in \{0,1\}} \Big(
		&f_{i_1,\ldots,i_n}^i \circ g_{i_{n+1},\ldots,i_{m+n-1},0}^1
		- (-1)^{mn} g_{i_1,\ldots,i_m}^i \circ f_{i_{m+1},\ldots,i_{m+n-1},0}^1 \Big)
		\in \operatorname{Hom}\left( V_{i_1} \otimes \cdots \otimes V_{i_{m+n-1}} \otimes \wedge^2 \mathfrak{g}' \otimes \mathfrak{g},\, \mathfrak{g} \right), \\
		\sum_{i \in \{0,1\}} \Big(
		&f_{i_1,\ldots,i_n}^i \circ g_{i_{n+1},\ldots,i_{m+n-1},0}^0
		- (-1)^{mn} g_{i_1,\ldots,i_m}^i \circ f_{i_{m+1},\ldots,i_{m+n-1},0}^0 \Big)
		\in \operatorname{Hom}\left( V_{i_1} \otimes \cdots \otimes V_{i_{m+n-1}} \otimes \wedge^3 \mathfrak{g}',\, \mathfrak{g}' \right), \\
		\sum_{i \in \{0,1\}} \Big(
		&f_{i_1,\ldots,i_n}^i \circ g_{i_{n+1},\ldots,i_{m+n-1},1}^0
		- (-1)^{mn} g_{i_1,\ldots,i_m}^i \circ f_{i_{m+1},\ldots,i_{m+n-1},1}^0 \Big)
		\in \operatorname{Hom}\left( V_{i_1} \otimes \cdots \otimes V_{i_{m+n-1}} \otimes \wedge^2 \mathfrak{g} \otimes \mathfrak{g}',\, \mathfrak{g}' \right).
	\end{align*}
\end{small}
Thus, we have
\[
[ \nu(f) , \nu(g)]_{\mathrm{R}}  = \nu\left( f\circ g - (-1)^{mn} g\circ f \right)[1] = \nu \left( [f,g]_{\mathrm{R}} \right),
\]
that is,
\[
\nu \colon V_{\mathrm{Pair}} \rightarrow V_{r\mathrm{Pair}}
\]
is a monomorphism of graded Lie algebras.
	\end{proof}

By Theorem~\ref{v-th} and Proposition~\ref{Prop: monomorphism of graded Lie alg}, we have the following Proposition:
\begin{prop}
	\[
	\left( V_{\mathrm{Pair}}[1]\oplus \mathfrak{a}, \{l^{\mathrm{ab}}_k\}_{k \geq 1}\right)
	\]
	is an $L_\infty[1]$-algebra with operations
	\[
	\begin{aligned}
		& l^{\mathrm{ab}}_2(f[1] \otimes g[1]) = \nu^{-1}[\nu(f), \nu(g)]_{\mathrm{R}}[1], \\
		& l^{\mathrm{ab}}_k\left(f[1] \otimes \theta_1 \otimes \cdots \otimes \theta_{k-1} \right)
		= \mathcal{P} \left[ \cdots \left[ \left[ \nu(f), \theta_1 \right]_{\mathrm{R}}, \theta_2 \right]_{\mathrm{R}}, \ldots, \theta_{k-1} \right]_{\mathrm{R}}, \quad \text{for } k \geq 2,
	\end{aligned}
	\]
	for all $\theta_1, \ldots, \theta_{k-1} \in \mathfrak{a}$ and $f, g \in V_{\mathrm{Pair}}$.
	
	Moreover, $\nu$ induces a monomorphism of $L_\infty[1]$-algebras:
	\[
	\begin{aligned}
		\tilde{\nu}: \left( V_{\mathrm{Pair}}[1] \oplus \mathfrak{a}, \{l^{\mathrm{ab}}_k\}_{k \geq 1} \right)
		& \longrightarrow \left( V_{r\mathrm{Pair}}[1] \oplus \mathfrak{a}, \{l^{m\mathrm{RB}}_k\}_{k \geq 1} \right), \\
		(f[1], \theta) & \longmapsto (\nu(f)[1], \theta).
	\end{aligned}
	\]
	
\end{prop}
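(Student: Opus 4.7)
The plan is to apply the second part of Theorem~\ref{v-th} directly, with $V' = V_{r\mathrm{Pair}}$ and the monomorphism $\iota = \nu\colon V'' = V_{\mathrm{Pair}} \hookrightarrow V_{r\mathrm{Pair}}$. First I would verify the two hypotheses. The graded Lie monomorphism property is exactly Proposition~\ref{Prop: monomorphism of graded Lie alg}, so nothing new is needed there. The compatibility condition $[\Delta, \nu(V_{\mathrm{Pair}})] \subset \nu(V_{\mathrm{Pair}})$ holds trivially, because the $V$-data set up in Proposition~\ref{Prop: L infinty on relative modified RB Lie alge} uses $\Delta = 0$. These two observations are precisely the input required by Theorem~\ref{v-th}.

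With the hypotheses in place, the theorem produces an $L_\infty[1]$-algebra structure on $V_{\mathrm{Pair}}[1] \oplus \mathfrak{a}$ whose operations match the stated formulas verbatim: the binary bracket in the first slot reads $\nu^{-1}[\nu(f),\nu(g)]_{\mathrm{R}}$, which is well-defined precisely because $\nu$ is a graded Lie monomorphism (so $[\nu(f),\nu(g)]_{\mathrm{R}} \in \nu(V_{\mathrm{Pair}})$ and $\nu$ is injective), while the mixed brackets $l^{\mathrm{ab}}_k(f[1] \otimes \theta_1 \otimes \cdots \otimes \theta_{k-1})$ for $k \geq 2$ coincide with $\mathcal{P}[\cdots[[\nu(f),\theta_1]_{\mathrm{R}},\theta_2]_{\mathrm{R}},\ldots,\theta_{k-1}]_{\mathrm{R}}$, and the purely $\mathfrak{a}$-valued higher brackets vanish because $\Delta = 0$.

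For the second assertion, I would verify that $\tilde\nu$ strictly intertwines every higher bracket. Write $\tilde\nu(f[1], \theta) = (\nu(f)[1], \theta)$. On the $V_{\mathrm{Pair}}[1]$-component, compatibility with $l_2$ reduces to $\nu\bigl(\nu^{-1}[\nu(f),\nu(g)]_{\mathrm{R}}\bigr) = [\nu(f),\nu(g)]_{\mathrm{R}}$, which holds by definition; on the mixed and purely $\mathfrak{a}$-valued operations both sides agree by construction with the same nested brackets and projections. Injectivity of $\tilde\nu$ follows from the injectivity of $\nu$ together with the identity map on $\mathfrak{a}$. Since $\Delta = 0$, there is no differential contribution from $l_1$ on either side, so nothing further is required.

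The only genuinely substantive point is the closure under the bracket, $[\nu(V_{\mathrm{Pair}}), \nu(V_{\mathrm{Pair}})]_{\mathrm{R}} \subset \nu(V_{\mathrm{Pair}})$, needed for $l^{\mathrm{ab}}_2$ to be well-defined; but this has already been settled in Proposition~\ref{Prop: monomorphism of graded Lie alg}. Everything else is a straightforward unpacking of Theorem~\ref{v-th} in the present notation.
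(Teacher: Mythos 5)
Your proposal is correct and follows essentially the same route as the paper, which likewise obtains this proposition by invoking Theorem~\ref{v-th} together with Proposition~\ref{Prop: monomorphism of graded Lie alg} (the closure $[\nu(f),\nu(g)]_{\mathrm{R}}=\nu([f,g]_{\mathrm{R}})$ and injectivity of $\nu$ being exactly what makes $\nu^{-1}[\nu(f),\nu(g)]_{\mathrm{R}}$ well-defined, and $\Delta=0$ making the remaining hypotheses and the vanishing of $l_1$ and of the purely $\mathfrak{a}$-valued brackets automatic). The additional check that $\tilde\nu$ intertwines the operations is already part of the conclusion of Theorem~\ref{v-th}, so nothing further is needed.
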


\begin{theorem}\label{Thm: MC of modified absolut RB}
	With all the above notations.     Suppose  there are maps $\pi\in \operatorname{Hom}(\wedge^3\mathfrak{g},\mathfrak{g})$,  $ \mathfrak{g})$, $T\in \operatorname{Hom}(  \mathfrak{g}  , \mathfrak{g})$ and $\lambda$ is non-zero square.
	
	Then $(\pi[1],\lambda^{-1/2}T)\in (V_{\mathrm{Pair}}[1] \oplus \mathfrak{a})^0$ is a  Maurer-Cartan element in $L_\infty[1]$-algebra $(V_{\mathrm{Pair}}[1] \oplus \mathfrak{a}, \{l^{\mathrm{ab}}_k\}_{k\geq 1} )$ if and only  $\left( \left(\mathfrak{g},\pi\right),\rho,\zeta,T\right) $  is   a  modified Rota-Baxter $3$-Lie algebra structure.
\end{theorem}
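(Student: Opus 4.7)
The plan is to expand the Maurer–Cartan series for $\alpha=(\pi[1],\lambda^{-1/2}T)$ in the $L_\infty[1]$-algebra $\bigl(V_{\mathrm{Pair}}[1]\oplus\mathfrak{a},\{l_k^{\mathrm{ab}}\}_{k\geq 1}\bigr)$ and to show that its two components recover, respectively, the Jacobi identity for $\pi$ and the absolute modified Rota-Baxter equation of weight $\lambda$ for $T$. The strategy runs parallel to the proof of Theorem~\ref{Thm: MC of modified relative RB}, the only new ingredient being the fact that the monomorphism $\tilde{\nu}$ sends the single element $\pi$ to the ``diagonal'' $\nu(\pi)=\pi_1+\pi_2+\pi_3+\pi_4$ whose four pieces are all copies of $\pi$, with no extra $\lambda$-factors on the $\mu$- and $\zeta$-slots as in the relative $\delta=\pi+\rho+\lambda(\mu+\zeta)$.

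First I would observe that, since $\Delta=0$, the purely $\mathfrak{a}$-valued operations $l_k(a_1\otimes\cdots\otimes a_k)$ vanish, and by Theorem~\ref{v-th} every $l_k^{\mathrm{ab}}$ with $k\geq 3$ accepts at most one input from $V_{\mathrm{Pair}}[1]$. Both $\pi[1]$ and $\lambda^{-1/2}T$ sit in degree $0$, so graded symmetry collapses the MC series to
\[
\tfrac{1}{2!}\,l_2^{\mathrm{ab}}(\alpha^{\otimes 2})+\tfrac{1}{4!}\,l_4^{\mathrm{ab}}(\alpha^{\otimes 4})=0.
\]
The $V_{\mathrm{Pair}}[1]$-component equals $\tfrac{1}{2}[\pi,\pi]_{\mathrm{R}}[1]$, which by Theorem~\ref{Thm: graded Lie structure of 3-Lie complex} vanishes if and only if $\pi$ is a $3$-Lie bracket on $\mathfrak{g}$. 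The $\mathfrak{a}$-component, after the same symmetrisation bookkeeping that produces the factor $24$ in the proof of Theorem~\ref{Thm: MC of modified relative RB}, reduces with $S:=\lambda^{-1/2}T$ to
\[
\mathcal{P}[\nu(\pi),S]_{\mathrm{R}} + \mathcal{P}[[[\nu(\pi),S]_{\mathrm{R}},S]_{\mathrm{R}},S]_{\mathrm{R}}=0.
\]

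Next I would evaluate these two brackets using the Rotkiewicz formula in $C^*_{\mathrm{3Lie}}(\mathfrak{g}\oplus\mathfrak{g}',\mathfrak{g}\oplus\mathfrak{g}')$, tracking which of the four components $\pi_i$ is active in each summand. The linear bracket uses only the components whose codomain forces $S$ to be applied once, yielding
\[
\mathcal{P}[\nu(\pi),T]_{\mathrm{R}}(u,v,w) = -T\pi(u,v,w) + \pi(u,v,Tw) + \pi(v,w,Tu) + \pi(w,u,Tv),
\]
while the cubic bracket uses only the components that can absorb three copies of $S$, yielding
\[
\mathcal{P}[[[\nu(\pi),T]_{\mathrm{R}},T]_{\mathrm{R}},T]_{\mathrm{R}}(u,v,w) = \pi(Tu,Tv,Tw) - T\bigl(\pi(Tu,Tv,w)+\pi(Tv,Tw,u)+\pi(Tw,Tu,v)\bigr).
\]
These are exactly the absolute specialisations of the formulas already verified in the proof of Theorem~\ref{Thm: MC of modified relative RB} under the substitutions $\mu,\zeta,\rho\mapsto\pi$.

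Finally, substituting $S=\lambda^{-1/2}T$ scales the linear bracket by $\lambda^{-1/2}$ and the cubic bracket by $\lambda^{-3/2}$; multiplying the resulting equation through by $\lambda^{3/2}$ yields
\[
\pi(Tu,Tv,Tw) = T\bigl(\pi(Tu,Tv,w)+\pi(Tv,Tw,u)+\pi(Tw,Tu,v)+\lambda\pi(u,v,w)\bigr) - \lambda\bigl(\pi(u,v,Tw)+\pi(v,w,Tu)+\pi(w,u,Tv)\bigr),
\]
which is precisely the defining relation for $T$ to be an absolute modified Rota-Baxter operator of weight $\lambda$ on $(\mathfrak{g},\pi)$. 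Thus the rescaling $\lambda^{-1/2}$ is forced by, and exactly compensates for, the mismatch between the undecorated diagonal $\nu(\pi)$ of the absolute setting and the $\lambda$-weighted diagonal $\delta$ of the relative setting. The only genuine technical obstacle is the explicit unravelling of the two Rotkiewicz brackets above; given the parallel with the relative computation in Theorem~\ref{Thm: MC of modified relative RB}, this reduces to careful case-checking of which component $\pi_i$ of $\nu(\pi)$ is admissible in each slot so that the domains and codomains match.
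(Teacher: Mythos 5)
Your proposal is correct and follows essentially the same route as the paper: truncate the Maurer--Cartan series to the $l_2^{\mathrm{ab}}$ and $l_4^{\mathrm{ab}}$ terms, identify the $V_{\mathrm{Pair}}[1]$-component with $\tfrac12[\pi,\pi]_{\mathrm R}$, evaluate $\mathcal{P}[\nu(\pi),T]_{\mathrm R}$ and $\mathcal{P}[[[\nu(\pi),T]_{\mathrm R},T]_{\mathrm R},T]_{\mathrm R}$ componentwise, and clear the $\lambda^{-1/2}$ and $\lambda^{-3/2}$ factors to recover the weight-$\lambda$ modified Rota--Baxter identity. Your added remarks on why the higher $l_k$ vanish and on which components $\pi_i$ of $\nu(\pi)$ are admissible in each slot are consistent with, and slightly more explicit than, the paper's own computation.
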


\begin{proof}
	Let $(\pi[1], \lambda^{-1/2}T) \in (V_{r\mathrm{Pair}})^0$. Then we have
	\begin{align*}
		l^{\mathrm{ab}}_2\big( (\pi[1], \lambda^{-1/2}T) \otimes (\pi[1], \lambda^{-1/2}T) \big)
		&= \left( \nu^{-1}[\nu(\pi), \nu(\pi)]_{\mathrm{R}}[1],\, 2\lambda^{-1/2} \mathcal{P}[\nu(\pi), T]_{\mathrm{R}} \right), \\
		l^{\mathrm{ab}}_4\big( (\nu(\pi)[1], \lambda^{-1/2}T)\otimes \cdots\otimes (\nu(\pi)[1], \lambda^{-1/2}T) \big)
		&= \left( 0,\, 24\lambda^{-3/2} \mathcal{P}[[[\nu(\pi), T]_{\mathrm{R}}, T]_{\mathrm{R}}, T]_{\mathrm{R}} \right).
	\end{align*}
	
	For all $(u,v,w) \in \wedge^2 \mathfrak{h}$, we compute:
	\begin{align*}
		\mathcal{P}[\nu(\pi), T]_{\mathrm{R}}(u,v,w)
		&= - T\pi(u,v,w)
		+ \left( \pi(u,v,Tw) + \pi(v,w,Tu) + \pi(w,u,Tv) \right), \\
		\mathcal{P}[[[\nu(\pi),T]_{\mathrm{R}},T]_{\mathrm{R}},T]_{\mathrm{R}}(u,v,w)
		&=  \pi(Tu,Tv,Tw)
		- T\left( \pi(Tu,Tv,w) + \pi(Tv,Tw,u) + \pi(Tw,Tu,v) \right).
	\end{align*}
	
	Therefore, the element $(\pi[1], \lambda^{-1/2}T) \in (V_{r\mathrm{Pair}})^0$ is a Maurer-Cartan element in the $L_\infty[1]$-algebra $(V_{\mathrm{Pair}}[1] \oplus \mathfrak{a}, \{l^{\mathrm{ab}}_k\}_{k\geq 1})$ if and only if
	\begin{align*}
		\sum_{k=1}^\infty \frac{1}{k!}~ l^{\mathrm{ab}}_k \left( (\pi[1], \lambda^{-1/2}T)^{\otimes k} \right)
		&= \frac{1}{2} l^{\mathrm{ab}}_2\left( (\pi[1], \lambda^{-1/2}T)^{\otimes 2} \right)
		+ \frac{1}{24} l^{\mathrm{ab}}_4\left( (\pi[1], \lambda^{-1/2}T)^{\otimes 4} \right) = 0.
	\end{align*}
	That is,
	\[
	[\pi, \pi]_{\mathrm{R}} = 0,
	\]
	and
	\begin{align*}
		\pi(Tu,Tv,Tw)
		&= T\left( \pi(Tu,Tv,w) + \pi(Tv,Tw,u) + \pi(Tw,Tu,v) \right) \\
		&\quad + \lambda\left( -\pi(u,v,Tw) - \pi(v,w,Tu) - \pi(w,u,Tv) + T\pi(u,v,w) \right).
	\end{align*}
	
	Hence, the element $(\pi[1], \lambda^{-1/2}T) \in (V_{r\mathrm{Pair}})^0$ is a Maurer-Cartan element if and only if the triple $\left( \mathfrak{g}, \pi, T \right)$ defines a modified Rota-Baxter 3-Lie algebra structure.
\end{proof}

\begin{remark}
Let \( R \) be a modified Rota-Baxter operator of weight \( \lambda \) on a $3$-Lie algebra \( (\mathfrak{g}, \pi) \). Then the element
\[
\alpha := (\pi[1],\, \lambda^{-1/2} T)
\]
is a Maurer-Cartan element in the \( L_\infty[1] \)-algebra \( (V_{\mathrm{Pair}}[1] \oplus \mathfrak{a}, \{l^{\mathrm{ab}}_k\}_{k \geq 1}) \). The cohomology of the cochain complex induced by \( \alpha \) via the twisting procedure described in Proposition~\ref{Twisting procedure} (restricted to \( \mathfrak{a} \)) coincides with the cohomology of the cochain complex defined in Theorem~\ref{Thm: cohomology of modified RB 3 Lie}.

\end{remark}

\subsection{Comparison of the controlling $L_\infty[1]$-algebra structure for the  relative modified Rota-Baxter $3$-Lie algebras  and the relative Rota-Baxter $3$-Lie algebras}

Hou, Sheng, and Zhou~\cite{HSZ23} constructed the controlling $L_\infty[1]$-algebra structure for relative Rota-Baxter operators of weight~$\lambda$ on $3$-Lie algebras. In this subsection, we will construct a controlling $L_\infty$-subalgebra of the $L_\infty[1]$-algebra introduced in Subsection~\ref{L infty structure for relative modified Rota-Baxter 3-Lie algebras}, which controls the deformation  of relative modified Rota-Baxter $3$-Lie algebras. This subalgebra simultaneously controls the deformations of both the relative Rota-Baxter operators and the underlying $3$-Lie algebra structures.

\begin{prop}
	Let $V_{\mathfrak{h}}$
	be a subspace of \( V_{r\mathrm{Pair}} \), and let \( \iota: V_{\mathfrak{h}} \hookrightarrow V_{r\mathrm{Pair}} \) be the natural embedding. Then \( \iota \) induces an $L_\infty$-subalgebra structure \( \{l^{\mathrm{RB}}_k\}_{k \geq 1} \) on \( V_{\mathfrak{h}}[1] \oplus \mathfrak{a} \), as well as an $L_\infty$-morphism
	\begin{align*}
		\tilde{\iota}: \big(V_{\mathfrak{h}}[1] \oplus \mathfrak{a},\{l^{\mathrm{RB}}_k\}_{k \geq 1} \big) &\longrightarrow \big(V_{r\mathrm{Pair}}[1] \oplus \mathfrak{a},\{l^{m\mathrm{RB}}_k\}_{k \geq 1} \big)\\
		(f[1],\theta)&\longrightarrow (\iota(f)[1],\theta),
	\end{align*}
	where
	
	$$
	\begin{aligned}
		&	l^{\mathrm{RB}}_2(f[1]\otimes g[1])	=\iota^{-1}[\iota(f),\iota(g)]_{\mathrm{R}}[1]\\
		&	l^{\mathrm{RB}}_k\left(f[1]\otimes \theta_1\otimes \ldots\otimes \theta_{k-1}\right)=\mathcal{P}\left[\cdots\left[\left[\iota(f), \theta_1\right]_{\mathrm{R}}, \theta_2\right]_{\mathrm{R}}, \ldots, \theta_{k-1}\right]_{\mathrm{R}}, \text { for } k \geq 2
	\end{aligned}
	$$
	
	for all $\theta_1,\ldots,\theta_{k-1}\in \mathfrak{a}$ and $f,g\in V_{\mathfrak{h}}$.
	
\end{prop}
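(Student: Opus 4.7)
The plan is to deduce this proposition as a direct application of the second part of Theorem~\ref{v-th}, using the $V$-data $(V,\mathfrak{a},\mathcal{P},\Delta)$ assembled in Proposition~\ref{Prop: L infinty on relative modified RB Lie alge}, together with the embedding $\iota\colon V_{\mathfrak{h}}\hookrightarrow V_{r\mathrm{Pair}}$. Recall from the proof of Proposition~\ref{Prop:subalgebra} that $V_{\mathfrak{h}}$ is the subspace of $V_{r\mathrm{Pair}}$ consisting of those multilinear maps whose codomain is $\mathfrak{h}$, so that $V_{r\mathrm{Pair}}=V_{\mathfrak{g}}\oplus V_{\mathfrak{h}}$ as graded vector spaces.

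The first step is to verify that $V_{\mathfrak{h}}$ is a graded Lie subalgebra of $(V_{r\mathrm{Pair}},[\cdot,\cdot]_{\mathrm{R}})$. Given $P,Q\in V_{\mathfrak{h}}$, the composition $P\circ Q$ substitutes $Q$ into one of the input slots of $P$; because the outermost operator is $P$ and has codomain $\mathfrak{h}$, the result lies in $V_{\mathfrak{h}}$, and similarly for $Q\circ P$. The allowable input patterns (tensor factors $\wedge^{2}\mathfrak{g}$ or $\wedge^{2}\mathfrak{h}$, plus a final argument in $\wedge^{3}\mathfrak{h}$ or $\wedge^{2}\mathfrak{g}\otimes\mathfrak{h}$) are preserved by substitution inside $V_{r\mathrm{Pair}}$, as was already checked in the proof of Proposition~\ref{Prop:subalgebra}. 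Hence $[P,Q]_{\mathrm{R}}=P\circ Q-(-1)^{|P||Q|}Q\circ P\in V_{\mathfrak{h}}$, and $\iota$ is indeed a monomorphism of graded Lie algebras.

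The second step is to apply the second part of Theorem~\ref{v-th}. Because $\Delta=0$ in the $V$-data of Proposition~\ref{Prop: L infinty on relative modified RB Lie alge}, the compatibility condition $[\Delta,\iota(V_{\mathfrak{h}})]\subset\iota(V_{\mathfrak{h}})$ holds trivially, so the theorem produces an $L_\infty[1]$-algebra structure on $V_{\mathfrak{h}}[1]\oplus\mathfrak{a}$. Substituting $\Delta=0$ into the formulas of Theorem~\ref{v-th} immediately yields the expressions for $l^{\mathrm{RB}}_{2}$ and $l^{\mathrm{RB}}_{k}$ displayed in the statement; the purely $\mathfrak{a}$-valued operations $\mathcal{P}[\cdots[[\Delta,a_1],a_2],\ldots,a_k]$ vanish, matching the fact that no such term appears in the statement.

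Finally, to see that $\tilde{\iota}$ is an $L_\infty[1]$-morphism, one compares the two structures componentwise. For the binary bracket, $\iota\circ l^{\mathrm{RB}}_{2}(f[1]\otimes g[1])=\iota\iota^{-1}[\iota(f),\iota(g)]_{\mathrm{R}}[1]=l^{m\mathrm{RB}}_{2}(\iota(f)[1]\otimes\iota(g)[1])$, and for $k\geq 2$ the operations $l^{\mathrm{RB}}_{k}$ and $l^{m\mathrm{RB}}_{k}$ are given by the identical expression $\mathcal{P}[\cdots[[\iota(f),\theta_{1}]_{\mathrm{R}},\theta_{2}]_{\mathrm{R}},\ldots,\theta_{k-1}]_{\mathrm{R}}$, while on pure $\mathfrak{a}$-inputs both structures reduce to $\mathcal{P}[\cdots[[\Delta,a_1],a_2],\ldots,a_k]=0$. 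Thus no higher Taylor coefficients are needed and $\tilde{\iota}=(\iota[1],\mathrm{id}_{\mathfrak{a}})$ is a strict $L_\infty[1]$-morphism. The only substantive point in the whole argument is the closure check of the first paragraph; everything else is a mechanical specialization of Theorem~\ref{v-th}.
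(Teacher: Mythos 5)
Your proposal is correct and follows essentially the same route as the paper: identify $V_{\mathfrak{h}}$ as a graded Lie subalgebra of $(V_{r\mathrm{Pair}},[\cdot,\cdot]_{\mathrm{R}})$ (using the closure property already established in the proof of Proposition~\ref{Prop:subalgebra}) and then invoke the second part of Theorem~\ref{v-th} with $\Delta=0$. The paper's own proof is terser, but your added verifications — the explicit codomain argument for closure and the componentwise check that $\tilde{\iota}$ is a strict morphism — are just elaborations of the same steps, not a different method.
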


\begin{proof}
	By the proof of Proposition~\ref{L infty  structure for relative  modified Rota-Baxter 3-Lie algebras}, we have that $V_{\mathfrak{h}}$ is also a subalgebra of the graded Lie algebra $V_{r\mathrm{Pair}}$.  Thus, by Theorem~\ref{v-th}, the bracket $\iota^{-1}[\iota(-),\ \iota(-)]$ is well-defined, and the map
	\[
	\tilde{\iota} \colon \left(V_{\mathfrak{h}}[1] \oplus \mathfrak{a},\, \{l^{\mathrm{RB}}_k\}_{k \geq 1} \right)
	\longrightarrow
	\left(V_{r\mathrm{Pair}}[1] \oplus \mathfrak{a},\, \{l^{m\mathrm{RB}}_k\}_{k \geq 1} \right)
	\]
	is an $L_\infty$-morphism.
\end{proof}

\begin{theorem}\label{Thm: MC of relative RB}
	With all the above notations.     Suppose  there are maps $\pi\in \operatorname{Hom}(\wedge^3\mathfrak{g},\mathfrak{g})$, $\mu\in \operatorname{Hom}(\wedge^3\mathfrak{h},\mathfrak{h})$, $\rho\in \operatorname{Hom}(\wedge^2 \mathfrak{g}\otimes \mathfrak{h}  , \mathfrak{h})$, $T\in \operatorname{Hom}(  \mathfrak{h}  , \mathfrak{g})$ and $\delta=\pi+\rho+\lambda\mu$ with nonzero weight $\lambda$.
	
	Then $(\delta[1], T)\in (V_{\mathfrak{h}}[1] \oplus \mathfrak{a})^0$ is a  Maurer-Cartan element in $L_\infty[1]$-algebra $(V_{\mathfrak{h}}[1] \oplus \mathfrak{a}, \{l^{\mathrm{RB}}_k\}_{k\geq 1} )$ if and only  $\left( \left(\mathfrak{h},\pi\right),\left(\mathfrak{g},\mu\right),\rho,T\right) $  is   a relative  Rota-Baxter $3$-Lie algebra structure.
\end{theorem}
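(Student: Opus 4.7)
The plan is to imitate the proof of Theorem~\ref{Thm: MC of modified relative RB} while simultaneously exploiting the $L_\infty[1]$-monomorphism $\tilde\iota$ constructed just above. The cleanest route goes as follows: since $\tilde\iota$ is an $L_\infty[1]$-monomorphism, an element $(\delta[1],T)\in (V_{\mathfrak{h}}[1]\oplus\mathfrak{a})^0$ is a Maurer-Cartan element for $\{l^{\mathrm{RB}}_k\}_{k\geq 1}$ if and only if its image $(\iota(\delta)[1],T)$ is a Maurer-Cartan element for $\{l^{m\mathrm{RB}}_k\}_{k\geq 1}$. Theorem~\ref{Thm: MC of modified relative RB} then identifies the latter with relative modified Rota-Baxter $3$-Lie algebra structures; since $\delta=\pi+\rho+\lambda\mu$ carries no $\zeta$-component, the resulting modified structure must have $\zeta=0$, and by case~(b) of the definition this is precisely a relative Rota-Baxter $3$-Lie algebra of weight~$\lambda$.

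For a self-contained verification I would then spell out the direct computation, which is a verbatim adaptation of the proof of Theorem~\ref{Thm: MC of modified relative RB}. On the degree-zero element $(\delta[1],T)$ only $l^{\mathrm{RB}}_2$ and $l^{\mathrm{RB}}_4$ can contribute to the Maurer-Cartan sum, producing the two components
\[
\tfrac{1}{2}\iota^{-1}[\iota(\delta),\iota(\delta)]_{\mathrm{R}}\in V_{\mathfrak{h}}^1,\qquad
\tfrac{1}{2}\mathcal{P}[\iota(\delta),T]_{\mathrm{R}}+\tfrac{1}{24}\mathcal{P}[[[\iota(\delta),T]_{\mathrm{R}},T]_{\mathrm{R}},T]_{\mathrm{R}}\in\mathfrak{a}^1.
\]
By Proposition~\ref{Prop:subalgebra} applied with $\zeta=0$, vanishing of the first component is equivalent to $\bigl((\mathfrak{h},\mu),(\mathfrak{g},\pi),\rho\bigr)$ being a relative $3$-Lie algebra pair in the sense of case~(b). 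For the second component one reuses the local bracket formulas already computed in the proof of Theorem~\ref{Thm: MC of modified relative RB}, with every $\zeta$-term simply dropped: the first bracket gives $-\lambda T\mu(u,v,w)$, while the triply nested bracket gives $\pi(Tu,Tv,Tw)-T\bigl(\rho(Tu,Tv)w+\rho(Tv,Tw)u+\rho(Tw,Tu)v\bigr)$, so the combined vanishing reads
\[
\pi(Tu,Tv,Tw)=T\bigl(\rho(Tu,Tv)w+\rho(Tv,Tw)u+\rho(Tw,Tu)v+\lambda\mu(u,v,w)\bigr),
\]
which is exactly the defining relation of a relative Rota-Baxter operator $T$ of weight $\lambda$.

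The main obstacle I anticipate is not a deep identity but a piece of bookkeeping: to verify that $V_{\mathfrak{h}}$ is indeed a graded Lie subalgebra of $V_{r\mathrm{Pair}}$, so that the operations $\{l^{\mathrm{RB}}_k\}$ are well-defined via Theorem~\ref{v-th}, and to check that for $k\geq 5$ the nested projections $\mathcal{P}[\cdots[[\iota(\delta),T]_{\mathrm{R}},T]_{\mathrm{R}},\ldots,T]_{\mathrm{R}}$ vanish on the relevant input. Both facts reduce to a direct case analysis tracking which of the summands $\mathfrak V_1,\ldots,\mathfrak V_4$ each intermediate composition lands in, of the same flavour as the subalgebra argument in the proof of Proposition~\ref{Prop:subalgebra}; no genuinely new computation beyond that already carried out for Theorem~\ref{Thm: MC of modified relative RB} is required.
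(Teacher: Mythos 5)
Your proposal is correct, and its second half (re-running the computation of Theorem~\ref{Thm: MC of modified relative RB} with every $\zeta$-term deleted, so that only $l^{\mathrm{RB}}_2$ and $l^{\mathrm{RB}}_4$ contribute and the second component of the Maurer--Cartan equation collapses to $\pi(Tu,Tv,Tw)=T\bigl(\rho(Tu,Tv)w+\rho(Tv,Tw)u+\rho(Tw,Tu)v+\lambda\mu(u,v,w)\bigr)$) is exactly what the paper intends, since its own proof consists of the single sentence ``as in the proof of Theorem~\ref{Thm: MC of modified relative RB}, we omit the details.'' Your first route --- pushing $(\delta[1],T)$ forward along the strict injective morphism $\tilde\iota$ and invoking Theorem~\ref{Thm: MC of modified relative RB} with $\zeta=0$, then recognising case~(b) of the definition --- is a clean shortcut the paper does not take; it buys you the theorem with no new computation, at the cost of having already verified that $\tilde\iota$ commutes strictly with all the $l_k$'s and is injective (which the paper asserts). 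One small bookkeeping remark: in your display for the degree-$\mathfrak a$ component the prefactors should be $\tfrac{1}{2!}\cdot 2=1$ and $\tfrac{1}{4!}\cdot 24=1$ once the symmetrisation factors of $l_2$ and $l_4$ are absorbed, so both nested-bracket terms enter with coefficient $1$; your final identity is nonetheless the correct one (the paper's own Equation~(\ref{Eq: MC element of relative RBm}) carries the same harmless inconsistency).
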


\begin{proof}
	As in the proof of Theorem~\ref{Thm: MC of modified relative RB}, we omit the details here.
\end{proof}

\begin{remark}
	Let $\alpha=\left( \left(\mathfrak{h},\pi\right),\left(\mathfrak{g},\mu\right),\rho, T\right)$ be a relative Rota-Baxter $3$-Lie algebra with trivial $T$. By Theorem~\ref{Thm: MC of relative RB}, $\alpha$ is also a Maurer-Cartan element in the $L_\infty[1]$-algebra $(V_{\mathfrak{h}}[1] \oplus \mathfrak{a}, \{l^{\mathrm{RB}}_k\}_{k\geq 1})$.
	
	By Proposition~\ref{Twisting procedure}, we obtain the twisted $L_\infty[1]$-algebra structure $\{(l^{\mathrm{RB}})^{\alpha}_k\}_{k\geq 1}$ on $V_{\mathfrak{h}}[1] \oplus \mathfrak{a}$ induced by the Maurer-Cartan element $\alpha$. In fact, the subalgebra $\mathfrak{a}$ of the twisted $L_\infty[1]$-algebra $(V_{\mathfrak{h}}[1] \oplus \mathfrak{a}, \{(l^{\mathrm{RB}})^{\alpha}_k\}_{k\geq 1})$ coincides with the $L_\infty[1]$-algebra introduced in Section~4 of \cite{HSZ23} by Hou, Sheng, and Zhou. This $L_\infty[1]$-algebra controls the deformation theory of relative Rota-Baxter operators of weight~$\lambda$ on $3$-Lie algebras $\left(\mathfrak{h},\pi\right)$, $\left(\mathfrak{g},\mu\right)$ together with the action $\rho$.
	
\end{remark}

\bigskip
\noindent
{\bf Acknowledgments.} This work  was   supported by  the National Key R$\&$D Program of China (No. 2024YFA1013803), by Natural Science Foundation of China (No. 12161013),      by the project OZR3762 of Vrije Universiteit Brussel, by the FWO Senior Research Project G004124N,    by Shanghai Key Laboratory of PMMP (No. 22DZ2229014), and by Guizhou Provincial Basic Research Program (Natural Science)(No. MS[2026]996). 

\noindent
\textbf{Author contributions}
All authors contributed equally.

\noindent
{ \bf Data availability} Data sharing not applicable to this article as no datasets were
generated or analysed during the current study.

\noindent
{ \bf Declarations}  \\
{ \bf Conflict of interest}  The authors have no relevant financial or non-financial
interests to disclose.

\noindent
\textbf{Ethical approval}  Not applicable.

\end{document}